\newtheorem{theorem}{Theorem}[section]
\newtheorem{prop}[theorem]{Proposition}
\newtheorem{lemma}[theorem]{Lemma}
\newtheorem{cor}[theorem]{Corollary}
\theoremstyle{definition}
\theoremstyle{remark}
\newtheorem{remark}[theorem]{Remark}
\DeclareMathOperator\tr{tr}
\def\ep{\varepsilon}    
\newcommand{\R}{\mathbb{R}}
\begin{document}


\title{A new conformal heat flow of harmonic maps}

\author{Woongbae Park}
\address{522 Thackeray Hall, Department of Mathematics, University of Pittsburgh, Pittsburgh, Pennsylvania, 15260}
\email{wop5@pitt.edu}

\subjclass[2020]{Primary 58E20, 53E99, 53C43; Secondary 35K58}

\keywords{harmonic maps, conformal heat flow, short time existence, global weak solution}

\begin{abstract}
We introduce and study a conformal heat flow of harmonic maps defined by an evolution equation for a pair consisting of a map and a conformal factor of metric on the two-dimensional domain.
This flow is designed to postpone finite time singularity but does not get rid of possibility of bubble forming.
We show that Struwe type global weak solution exists, which is smooth except at most finitely many points.
\end{abstract}

\maketitle
\sloppy

\section{Introduction}
\label{sec1}

Consider a map $f_0 : \Sigma \times [0,T) \rightarrow N$ from a compact Riemann surface $(\Sigma,g_0)$ with metric $g_0$ to a Riemannian manifold $(N,h)$.
Under the usual harmonic map heat flow, $f_0$ evolves to a map $f(t)$ according to the evolution equation $f_t = \tau_{g_0}(f)$, where $\tau_g(f) = \tr_g (\nabla^g df)$ is the tension field with respect to the metric $g$.
In this paper we consider the generalization in which both the map and the metric evolve with $(f(t),g(t))$ satisfying the equations
\begin{subnumcases}{\label{main}}
f_t \ = \tau_g(f) \label{main1}\\
g_t \ = (2b|df|_g^2 -2a)g \label{main2}
\end{subnumcases}
where $a,b>0$ are constants and $|df|_g^2 = g^{ij}h_{\alpha \beta} f_i^{\alpha}f_j^{\beta}$ is the energy density.
We assume that the initial map $f(0)=f_0$ and metric $g(0)=g_0$ are smooth.

The first of these equations is the harmonic map heat flow, with varying metric $g$.
The second equation is designed to attenuate energy concentration.
If the energy density become large in some region $\Omega \subset \Sigma$, then under the flow \eqref{main2}, the metric is conformally enlarged; this increases the area of $\Omega$ and decreases the energy density.
This suggests that the system \eqref{main} may be better behaved than the harmonic map heat flow, where energy concentration at points is an impediment to convergence.

Writing the metric $g(t) = e^{2u} g_0$ for a real-valued function $u(t)$, equations \eqref{main} are equivalent to the following equations for the pair $(f(t),u(t))$:
\begin{subnumcases} {\label{main0}}
f_t \ = e^{-2u}\tau(f) \label{main0-1}\\
u_t \ = b e^{-2u}|df|^2 - a \label{main0-2}
\end{subnumcases}
where $\tau$ and $|df|^2$ are with respect to the fixed metric $g_0$, and where the initial conditions are $f(0)=f_0$, $u(0)=0$.
In this form, the flow is more easily analyzed.

The main Theorem of this paper is the following.

\begin{theorem}\label{global solution}
(Existence of global weak solution)
For any $f_0 \in W^{3,2}(\Sigma,N)$, a global weak solution $(f,u)$ of \eqref{main0} exists on $\Sigma \times [0,\infty)$ which is smooth on $\Sigma \times (0,\infty)$ except at most finitely many points.
\end{theorem}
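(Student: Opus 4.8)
The plan is to adapt Struwe's construction of a global weak solution of the two‑dimensional harmonic map heat flow to the coupled system \eqref{main0}, exploiting two structural features: the Dirichlet energy is conformally invariant in dimension two, so the map can be controlled by the $g_0$‑energy, and the $u$‑equation \eqref{main0-2} contains no spatial derivatives, so the conformal factor can be recovered from the map by an explicit ODE. Throughout, one works with a smooth solution on a maximal interval $[0,T_{\max})$ provided by the short‑time existence result.

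\emph{A priori estimates.} Since $|df|_{g(t)}^2\,dV_{g(t)}=|df|_{g_0}^2\,dV_{g_0}$ in dimension two, the energy $E(f(t))=\tfrac12\int_\Sigma|df|_{g_0}^2\,dV_{g_0}$ satisfies $\frac{d}{dt}E(f(t))=-\int_\Sigma e^{-2u}|\tau(f)|^2\,dV_{g_0}\le 0$, hence $E(f(t))\le E(f_0)$ and $\int_0^{T_{\max}}\!\!\int_\Sigma e^{-2u}|\tau(f)|^2\,dV_{g_0}\,dt\le E(f_0)$. Setting $v=e^{2u}$, equation \eqref{main0-2} becomes the pointwise linear ODE $v_t=2b|df|^2-2av$, $v(0)=1$, so $v(t,x)=e^{-2at}+2b\int_0^t e^{-2a(t-s)}|df(s,x)|^2\,ds$; this yields $v(t,x)\ge e^{-2at}$, hence a uniform upper bound $e^{-2u}\le e^{2aT}$ on $[0,T]$ for the parabolic coefficient in \eqref{main0-1}, and $\int_\Sigma v(t)\,dV_{g_0}\le C(E(f_0),a,|\Sigma|)$, so the area of $(\Sigma,g(t))$ stays bounded.

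\emph{Local energy and $\varepsilon$‑regularity.} Testing \eqref{main0-1} against $\varphi^2\,df$ with a cutoff $\varphi$ and using the energy bound gives $\frac{d}{dt}\int_\Sigma\varphi^2|df|^2\,dV_{g_0}\le C(T,a)\,\|\nabla\varphi\|_\infty^2\,E(f_0)$, i.e.\ local energy is almost monotone in Struwe's sense. The crux is an $\varepsilon$‑regularity lemma: there are $\varepsilon_0,R_0>0$, depending only on $N$, $g_0$, $a$, $b$ and $T$, so that if $\sup_x\int_{B_R(x)}|df(t)|^2\,dV_{g_0}\le\varepsilon_0$ for all $t$ in a time subinterval and all $R\le R_0$, then $f$ is bounded in $C^k_{loc}$ for every $k$ on the corresponding open parabolic region, and consequently, via the integral formula for $v$, so is $u$ (in particular $u$ cannot blow up where the local energy is small, so the coefficient $e^{-2u}$ stays bounded away from $0$ there). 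I expect this lemma to be the main obstacle. Unlike the pure harmonic map heat flow, the Bochner identity for $e(f)=\tfrac12|df|^2$ acquires an extra term $-2e^{-2u}\langle df,\nabla u\cdot\tau(f)\rangle$, so one needs $\nabla u$; but $\nabla v(t,x)=4b\int_0^t e^{-2a(t-s)}\langle df,\nabla df\rangle(s,x)\,ds$ involves second derivatives of $f$, so the $L^\infty$‑estimate for $df$ and the estimate for $\nabla u$ must be closed simultaneously by a Gronwall/bootstrap argument on a small parabolic cylinder, using the smallness of the local energy to absorb the coupling terms; once $e^{-2u}$ is two‑sidedly controlled, full regularity of $f$ and $u$ follows by standard parabolic bootstrapping.

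\emph{Assembly and restart.} Combining almost‑monotonicity of local energy with the bound $E(f_0)$, only finitely many points $p_1,\dots,p_m\in\Sigma$ can satisfy $\limsup_{t\uparrow T_{\max}}\int_{B_R(p_j)}|df(t)|^2\,dV_{g_0}\ge\varepsilon_0$ for every $R>0$; by the $\varepsilon$‑regularity lemma $(f,u)$ is smooth on $(\Sigma\setminus\{p_1,\dots,p_m\})\times(0,T_{\max})$, and if $T_{\max}<\infty$ one extracts a weak limit $f(T_{\max})\in W^{1,2}(\Sigma,N)$ (strongly in $L^2(\Sigma)$ and in $W^{1,2}_{loc}$ away from the $p_j$, using the bound on $\int_0^{T_{\max}}\!\int_\Sigma e^{-2u}|\tau(f)|^2$), with $E(f(T_{\max}))\le E(f_0)-\bar\varepsilon$ for a fixed $\bar\varepsilon>0$, while $e^{2u(T_{\max})}\in L^1(\Sigma)$ and is locally bounded away from the $p_j$ (directly from the integral formula for $v$). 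One then restarts the flow from $(f(T_{\max}),u(T_{\max}))$ by the short‑time existence result applied to this less regular initial data, and continues. Since $E$ decreases by at least $\bar\varepsilon$ at each such singular time and is nonnegative, there are only finitely many singular times, each with finitely many singular points; concatenating the smooth pieces and passing the estimates to the limit produces a global weak solution of \eqref{main0} on $\Sigma\times[0,\infty)$, smooth off a finite set, which is the assertion of Theorem~\ref{global solution}.
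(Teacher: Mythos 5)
Your high-level outline (conformal invariance of the Dirichlet energy, ODE representation of $e^{2u}$, Struwe-type almost-monotonicity, $\varepsilon$-regularity, finitely many concentration points, restart with decreased energy) matches the paper's architecture, and you correctly identify the $\varepsilon$-regularity lemma as the crux. However, you leave precisely that lemma as an unresolved "main obstacle," and the method you sketch for it is both different from the paper's and not obviously workable.

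The paper does not use a Bochner identity for $e(f)$. Instead, it differentiates \eqref{main0-1} in time, tests against $f_t|f_t|^p\varphi^2$, and obtains the differential inequality \eqref{p+2 derivative} for $\int e^{2u}|f_t|^{p+2}\varphi^2$. The decisive observation is that with the additional standing assumption $b\ge C_N^2$ (introduced at the start of Section~\ref{sec6}), the coefficient $\bigl((p+2)C_N+\tfrac{(p+2)}{2}C_N^2-2b(p+1)\bigr)$ in front of $\int|df|^2|f_t|^{p+2}\varphi^2$ is negative for every $p\ge 0$, so this dangerous coupling term can simply be discarded. This is exactly what makes the estimate close \emph{without} needing an a priori $L^\infty$ bound on $df$ or $\nabla u$: the smallness hypothesis is only on the local energy (plus smallness of $\int e^{18u}$ at the initial time slice). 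Your Bochner route produces the cross term $-2e^{-2u}\langle df,\nabla u\cdot\tau(f)\rangle$ and requires a genuinely simultaneous Gronwall/bootstrap of $\|df\|_{L^\infty}$ and $\nabla u$; you do not explain what replaces the sign miracle, and the self-referential structure ($\nabla u$ controlled by $\nabla^2 f$, $\nabla^2 f$ controlled by $e^{2u}f_t$ and $|df|^2$, $u$ controlled by $|df|^2$) is the precise circularity the paper's $b\ge C_N^2$ device is designed to break. As it stands, your proof has a gap at the $\varepsilon$-regularity step.

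Two smaller points. First, the paper's $\varepsilon$-regularity hypothesis is \emph{not} just local energy smallness: it also requires $\int_{B_{2r}}e^{18u}\le\varepsilon_1$ at time $T-2\delta r^2$ (Proposition~\ref{W^23 estimate}), which enters when converting $\|e^{2u}|f_t|\varphi\|_{L^p}$ bounds into $W^{2,p}$ bounds via the elliptic equation $\Delta f+A(df,df)=e^{2u}f_t$; your claim that energy smallness alone forces $u$ to stay bounded is asserted, not proved. Second, you nowhere record the structural restriction $b\ge C_N^2$; the theorem as stated for arbitrary $a,b>0$ is not what the paper's Sections~\ref{sec6}--\ref{sec8} actually establish, so any complete proof must either impose it or find an argument avoiding it.
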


There is a long history of harmonic maps and related fields.
We could not list all such literatures but only few, including \cite{EL78}, \cite{EL88}, \cite{SU81}, \cite{S84}, \cite{J08}, \cite{H91}, \cite{P96}, \cite{LW98}, \cite{CT99}, \cite{R07}, \cite{Z10}, \cite{CLW12} and therein.
In terms of heat flow of harmonic maps, see for example \cite{ES64}, \cite{S85}, \cite{CG89}, \cite{CD90}, \cite{S90}, \cite{F95}, \cite{F95b}, \cite{T97}, \cite{T02}, \cite{T04}, \cite{T04b}, \cite{LW08}, \cite{HW16} and therein.
Note that usual heat flow can have finite time singularity, see Chang-Ding-Ye \cite{CDY92}, Raphael-Schweyer \cite{RS14}, or more recently D{\'a}vila-Del Pino-Wei \cite{DDW20}.

There are several directions to allow metric change along harmonic map heat flow.
The most well-known direction is Teichm{\"u}ller flow, where metric lies in Teichm{\"u}ller space of constant curvature.
Teichm{\"u}ller flow is the $L^2$ gradient flow of the energy and hence reduce the energy in the fastest sense.
For relevant literature, see for example Rupflin-Topping \cite{RT18a}, Huxol-Rupflin-Topping \cite{HRT16} or Rupflin-Topping \cite{RT19} and therein.
Another direction is Ricci-harmonic map flow.
This is a combination of harmonic map heat flow and Ricci flow of the metric.
Surprisingly, this flow is more regular than both harmonic map heat flow and Ricci flow.
See for example, Muller \cite{M12}, Williams \cite{W15} or Buzano-Rupflin \cite{BR17} among others.
Recently in Huang-Tam \cite{HT21}, harmonic map heat flow together with evolution equation of metric is considered under time-dependent curvature restriction and smooth short time existence is obtained.
Because we do not assume a priori curvature bounds of the domain, the result cannot be applied into our case.

The paper is organized as follows.
In Section \ref{sec2} we look at some preliminaries, including volume formula and its asymptotic limit if the map $f$ is steady solution, that is, harmonic.
Next, in Section \ref{sec3} we define Hilbert spaces $X,Y,Z$ and their closed subsets $B,B'$.
So, from Section \ref{sec3} we consider $f \in B$ and $u \in B'$.
Then Section \ref{sec4} defines the operator $S_1, S_2$ and shows their properties.
Briefly, we can show that $S_1 : B \times B' \to B$ and $S_2 : B \times B' \to B'$ and they satisfy twisted partial contraction properties, see Lemmas \ref{P1Lemma2}, \ref{P1Lemma3}, \ref{P2Lemma2}, and \ref{P2Lemma3}.
In the last section \ref{sec5} we define the operator $\mathcal{S}$ on $B \times B'$ mapping into itself defined by $\mathcal{S} = (S_1,S_2)$.
For $T$ small enough, $\mathcal{S}$ is a contraction and hence we can prove short time existence.

Next we are working on types of singularity.
Ultimately we will show that the solution is singular only when energy concentrates, similar with Struwe's result.
In Section \ref{sec6} we show local estimate and obtain bounds for $\iint e^{2u}|f_t|^4$.
This is used in Section \ref{sec7} to show $W^{2,2}$ and higher estimate, which implies boundedness of $|df|$.
Finally in Section \ref{sec8} we prove the main theorem \ref{global solution}.

\subsection{Notation}

Even though our equation is heat-type equation for varying metric, we use initial metric $g_0$ as default.
So, all terms using metric use $g_0$ unless we specify the metric.
For example, $|df|^2$ is calculated in terms of $g_0$ and $|df|_g^2$ is calculated in terms of $g$.
If the volume form is calculated in terms of metric $g$, we denote it as $dvol_g$.
We also omit  $dvol_{g_0}$ and $dt$ if there is no confusion.
We also use the simplifications $\|\cdot\|_{W^{k,p}} = \|\cdot\|_{W^{k,p}(\Sigma \times [0,T])}$, $\|\cdot\|_{C^0} = \|\cdot\|_{C^0(\Sigma \times [0,T])}$ and $\|\cdot\|_{L^p} = \|\cdot\|_{L^p(\Sigma \times [0,T])}$.
Also, the constant $c$ is universal and changed line by line.

\section{Preliminaries}
\label{sec2}

Before we show the main result, we record a few facts about solutions to the flow equations \eqref{main0}.

\subsection{Energy and Volume}

First note that the 2-form $|df|^2 \, dvol_g$ is conformally invariant, and that the energy
\begin{equation} \label{2.energy}
E(t) = \tfrac{1}{2} \int |df|^2 \, dvol_g
\end{equation}
satisfies
\begin{equation}\label{E'<0}
E'(t) =  \int \ \langle df, df_t\rangle\ =\ - \int \langle \nabla df, e^{-2u}\tau(f)\rangle\ =\  - \int   e^{-2u} |\tau(f)|^2\ \le\ 0.
\end{equation}
Thus $E(t)\le E_0$ for all $t$.\\

\begin{lemma}
The volume satisfies $V(t)\le e^{-2at} V(0) + \tfrac{2b}{a} E_0$, and hence is finite for all $t$.
\end{lemma}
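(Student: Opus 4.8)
The plan is to differentiate the volume $V(t) = \int dvol_{g(t)} = \int e^{2u(t)}\,dvol_{g_0}$ in time, turn the result into a linear differential inequality, and integrate it via an integrating factor. Applied to a smooth solution on the compact surface $\Sigma$, differentiation under the integral sign is legitimate, and using \eqref{main0-2} one finds
\begin{equation*}
V'(t) \;=\; \int 2u_t\, e^{2u}\, dvol_{g_0} \;=\; \int \bigl(2b\,|df|^2 - 2a\,e^{2u}\bigr)\, dvol_{g_0} \;=\; 2b\int |df|^2\, dvol_{g_0} \;-\; 2a\,V(t).
\end{equation*}

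Next I would control the first term by the energy rather than by the (possibly unbounded) density $e^{-2u}|df|^2$: by the conformal invariance of the energy density $2$-form, $|df|^2\, dvol_{g_0} = |df|_g^2\, dvol_g$, so $\int |df|^2\, dvol_{g_0} = 2E(t)$, and by the monotonicity \eqref{E'<0} we have $E(t)\le E_0$. Hence $V'(t) \le 4bE_0 - 2a\,V(t)$, equivalently $\tfrac{d}{dt}\!\left(e^{2at}V(t)\right) \le 4bE_0\, e^{2at}$. Integrating from $0$ to $t$ gives $e^{2at}V(t) - V(0) \le \tfrac{2bE_0}{a}\left(e^{2at}-1\right)$, and therefore
\begin{equation*}
V(t) \;\le\; e^{-2at}V(0) + \frac{2bE_0}{a}\bigl(1-e^{-2at}\bigr) \;\le\; e^{-2at}V(0) + \frac{2b}{a}E_0,
\end{equation*}
which is the claimed bound; in particular $V(t)$ is finite, and in fact uniformly bounded by $V(0) + \tfrac{2b}{a}E_0$, for all $t$.

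I do not expect a genuine obstacle here: the whole content is the identification $\int |df|^2\,dvol_{g_0} = 2E(t)$, which is exactly where the design of the flow pays off — the conformal enlargement is driven by the density $|df|^2$, but after integration over $\Sigma$ this becomes the \emph{total} energy, which is nonincreasing by \eqref{E'<0}, rather than the pointwise quantity that may blow up. (One could equally argue straight from \eqref{main2}, since in dimension two $\tfrac{d}{dt}\,dvol_g = (2b|df|_g^2 - 2a)\,dvol_g$.) If one wants the estimate for the weak solutions of Theorem \ref{global solution} rather than for smooth ones, the same computation can be run on the smooth approximants and passed to the limit, using that $t\mapsto E(t)$ is nonincreasing.
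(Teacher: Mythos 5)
Your proof is correct and arrives at the same bound the paper proves, by essentially the same mechanism: a linear first-order ODE (in)equality with integrating factor $e^{2at}$, combined with the observation that $\int_\Sigma |df|^2\,dvol_{g_0} = 2E(t) \le 2E_0$. The only organizational difference is that the paper solves \eqref{main0-2} pointwise for $e^{2u}$ first and then integrates over $\Sigma$, whereas you integrate over $\Sigma$ first and then treat the resulting differential inequality for $V(t)$; these are the same computation in a different order, and the paper's pointwise formula is recorded because it is reused later (e.g.\ in Lemma \ref{lemma 2.3}).
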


\begin{proof}
The second equation \eqref{main0-2} can be explicitly solved, yielding
\begin{equation}\label{eq:2.1}
e^{2u} = e^{-2at} \left( 1 + 2b \int_{0}^{t}e^{2as}|df|^2 (s)ds \right). 
\end{equation}
Consequently,   the volume
\begin{equation}\label{eq:2.2}
V(t) = \int_{\Sigma} dvol_{g(t)} = \int_{\Sigma} e^{2u} dvol_{g_0}
\end{equation}
can be written as
\begin{equation}\label{eq:2.3}
V(t) = e^{-2at} \left( V(0) + 4b \int_{0}^{t}e^{2as} E(s)ds \right).
\end{equation}
The lemma follows by noting that $E(s)\le E_0$ and integrating.
\end{proof}

\subsection{Asymptotic behavior of steady solution}

Now we consider steady solution.

\begin{lemma} \label{lemma 2.3}
Let $(f,u)$ be a solution of \eqref{main0} and $f(0)$  a harmonic with energy $E$.
Then $f(t)$ is harmonic for all $t$ and as $t \rightarrow \infty$,
\begin{equation*}
e^{2u} \rightarrow \tfrac{b}{a}|df|^2
\end{equation*}
and hence by \eqref{eq:2.2} the volume $V(t)$ converges to
\begin{equation*}
V(\infty) = \tfrac{2b}{a} E.
\end{equation*}
\end{lemma}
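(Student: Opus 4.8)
The plan is to show first that the map component does not move, so that the system collapses to an autonomous scalar ODE for $u$, then to integrate that ODE and read off the volume.

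\emph{Step 1: the map is frozen.} Because $f_0$ is harmonic, $\tau_{g_0}(f_0)=0$, so \eqref{main0-1} already gives $f_t(\cdot,0)=0$. I would then differentiate \eqref{main0-1} in $t$. Writing $\tau_{g_0}(f)=\Delta_{g_0}f+\Gamma(f)(df,df)$ and using that the domain metric $g_0$ is fixed in time (so that $\partial_t\Delta_{g_0}f=\Delta_{g_0}f_t$ and $\partial_t(e^{-2u})\,\tau(f)=-2u_t f_t$), one finds that $v:=f_t$ solves a linear parabolic system
\begin{equation*}
\partial_t v \ = \ e^{-2u}\,\Delta_{g_0}v \ + \ b^1\cdot\nabla v \ + \ b^0 v, \qquad v(\cdot,0)=0,
\end{equation*}
whose coefficients $b^0,b^1$ are bounded on each slab $\Sigma\times[0,T]$ (they are built from $f$, $df$, $u$, $u_t$, all bounded there for a solution). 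Uniqueness for linear parabolic systems then forces $v\equiv 0$, so $f(t)\equiv f_0$ is harmonic for every $t$. Equivalently, once uniqueness for \eqref{main0} is available, it suffices to observe that the pair $(f_0,u(t))$, with $u$ as found in Step 2, is a solution with the prescribed initial data.

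\emph{Step 2: asymptotics of $u$.} With $f\equiv f_0$, the energy density $|df|^2=|df_0|^2$ is independent of $t$, so at each point $x$ equation \eqref{main0-2} is the autonomous ODE $u_t=b e^{-2u}|df_0|^2-a$. The substitution $w:=e^{2u}$ linearizes it to $w_t=-2aw+2b|df_0|^2$ with $w(0)=1$, whence
\begin{equation*}
e^{2u(x,t)} \ = \ e^{-2at} \ + \ \tfrac{b}{a}\,|df_0(x)|^2\bigl(1-e^{-2at}\bigr),
\end{equation*}
in agreement with \eqref{eq:2.1}. Since $f_0$ is smooth on the compact surface $\Sigma$, $|df_0|^2$ is bounded, hence $e^{2u(\cdot,t)}\to\tfrac{b}{a}|df_0|^2$ uniformly on $\Sigma$ as $t\to\infty$.

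\emph{Step 3: volume.} Since $\tau(f)\equiv 0$, \eqref{E'<0} gives $E(s)\equiv E$; plugging this into \eqref{eq:2.3} yields
\begin{equation*}
V(t) \ = \ e^{-2at}V(0) \ + \ \tfrac{2b}{a}E\bigl(1-e^{-2at}\bigr)\ \longrightarrow\ \tfrac{2b}{a}E \quad\text{as } t\to\infty .
\end{equation*}
Alternatively one integrates the display of Step 2 over $\Sigma$ against $dvol_{g_0}$ and uses $\int_\Sigma|df_0|^2\,dvol_{g_0}=2E$; the limit passes under the integral by dominated convergence, using the uniform bound $e^{2u}\le 1+\tfrac{b}{a}\sup_\Sigma|df_0|^2$ for $t\ge 0$ together with $V(0)<\infty$. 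The main obstacle is Step 1 --- excluding any motion of $f$ --- which rests on linear parabolic uniqueness for the differentiated equation (hence on enough regularity of $(f,u)$ for that equation to make sense with bounded coefficients), or alternatively on uniqueness for the system \eqref{main0} itself; once $f$ is known to be frozen, Steps 2 and 3 are just explicit ODE integration and a dominated-convergence argument.
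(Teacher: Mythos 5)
Your proof is correct and follows the same overall plan as the paper: first show the map component is frozen, then integrate the resulting scalar ODE for $u$ (arriving at the same closed-form expression that the paper derives directly from \eqref{eq:2.1}), and finally pass to the limit for the volume. The one substantive difference is that you supply a real justification for the "$f$ is frozen" step: the paper simply asserts "If $f(0)$ is harmonic, then $f_t=0$ and hence $f$ and $|df|^2$ are independent of $t$", which only establishes $f_t=0$ at the initial time. Your Step 1 (either via linear parabolic uniqueness for the differentiated equation, or more cleanly via uniqueness for the system \eqref{main0} itself applied to the candidate solution $(f_0,\tilde u)$ with $\tilde u$ solving the ODE) closes that gap. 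The rest — Steps 2 and 3 — matches the paper's computation, with the small cosmetic difference that you justify passing the limit under the integral by dominated convergence rather than reading it off from the explicit formula \eqref{eq:2.3}; either is fine.
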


\begin{proof}
If $f(0)$ is harmonic, then $f_t=0$ and hence $f$ and $|df|^2$ are independent of $t$.
Integrating \eqref{eq:2.1} then shows that, as $t \rightarrow \infty$,
\begin{align*}
e^{2u} &= e^{-2at} \left( 1 + 2b |df|^2 \frac{e^{2at}-1}{2a} \right)\\
&= e^{-2at} + \tfrac{b}{a} |df|^2 (1-e^{-2at}) \rightarrow \tfrac{b}{a} |df|^2.
\end{align*}
\end{proof}

This means that, for solutions as in Lemma \ref{lemma 2.3}, the energy density $|df|_g^2 = |df|^2 e^{-2u}$ converges as $t \to \infty$ to the constant $\frac{a}{b}$.
Hence the conformal heat flow forces the conformal factor and the energy density be distributed evenly.
Remark that, because the image $f(\Sigma)$ does not change, this flow modifies the domain toward the space which is similar to the image with the similarity ratio $\frac{a}{b}$.\\

\section{Construction of Hilbert spaces}
\label{sec3}

In this section we build Hilbert spaces $X_T,Y_T,Z_T$ and their closed subsets $B,B'$.
For parabolic theory used here, see Mantegazza-Martinazzi \cite{MM12}, Evans \cite{E10} or Lieberman \cite{L96}.
From now on, we consider the target manifold being isometrically embedded, $N \hookrightarrow \mathbb{R}^L$.

\subsection{Spaces $X$, $Y$ and $Z$}

The set
\[
Y_T\ = L^2([0,T],W^{4,2}(\Sigma,\mathbb{R}^L)) \cap W^{1,2}([0,T],W^{2,2}(\Sigma,\mathbb{R}^L)) \cap W^{2,2}([0,T],L^2(\Sigma,\mathbb{R}^L))
\]
is a Hilbert space with norm
\[
\|f\|^2_Y\ =\ \int_0^T\int_\Sigma\  |\nabla^4f|^2 + |f|^2+|\nabla^2 f_t|^2 + |f_t|^2+|f_{tt}|^2 \ dvol_{g_0} \ dt.
\]


As in Proposition 4.1 in \cite{MM12},
\[
Y_T \hookrightarrow C^0([0,T],C^1(\Sigma,\mathbb{R}^L)) \cap L^4([0,T],W^{3,4}(\Sigma,\mathbb{R}^L)) \cap W^{1,4}([0,T],W^{1,4}(\Sigma,\mathbb{R}^L))
\]
and there is a constant ${c}$ such that 
\begin{align}\label{A1}
\|f\|_{C^0} + \|\nabla f\|_{C^0} + \| \nabla^3 f\|_{L^4} + \|\nabla f_t\|_{L^4} \le {c}\|f\|_Y.
\end{align}
Also, by standard parabolic theory (See, for example, \cite{E10}), $f \in Y_T$ implies $f \in C^0([0,T],W^{3,2}(\Sigma,\mathbb{R}^L))$, $f_t \in C^0([0,T],W^{1,2}(\Sigma,\mathbb{R}^L))$ and
\begin{align}\label{A1'}
 \max_{0 \le t \le T} \| f(t)\|_{W^{3,2}(\Sigma)}, \max_{0 \le t \le T}  \| f_t(t)\|_{W^{1,2}(\Sigma)} \le {c} \|f\|_Y.
\end{align}

This also implies that
\begin{align} \label{A1''}
\max_{0 \le t \le T} \|f(t)\|_{W^{2,8}(\Sigma)} \le {c} \|f\|_Y.
\end{align}

Next, denote
\[
X_T = L^2([0,T],W^{2,2}(\Sigma,\mathbb{R}^L)) \cap W^{1,2}([0,T],L^2(\Sigma,\mathbb{R}^L))
\]
be another Hilbert space with norm
\[
\|f\|^2_{X} = \int_{0}^{T} \int_{\Sigma} |f|^2 + |\nabla^2 f|^2 + |f_t|^2 \ dvol_{g_0} \ dt.
\]

Note that in the notation of \cite{MM12}, $Y = P^2$ and $X = P^1$.


Now we define spaces for $u$.
The set
\[
Z_T\ = L^2([0,T],W^{3,2}(\Sigma)) \cap W^{1,2}([0,T],W^{1,2}(\Sigma))
\]
is a Hilbert space with norm
\[
\|u\|_{Z}^2 \ = \  \int_0^T\int_\Sigma\  |\nabla^3 u|^2 + |u|^2+|\nabla u_t|^2 + |u_t|^2 \ dvol_{g_0} \ dt.
\]
Similar to above, there is a constant ${c}$ such that
\begin{equation} \label{AZ1}
\|\nabla^2 u\|_{L^4} + \|u_t\|_{L^4} \le {c} \|u\|_Z
\end{equation}
and
\begin{equation} \label{AZ2}
\max_{0 \le t \le T}  \|u(t)\|_{W^{2,2}(\Sigma)} + \max_{0 \le t \le T}  \|u_t(t)\|_{L^2(\Sigma)} \le {c} \|u\|_{Z}.
\end{equation}
Also, by Sobolev embedding, we have
\begin{equation} \label{AZ3}
\max_{0 \le t \le T} \|u(t)\|_{W^{1,8}(\Sigma)} \le {c} \|u\|_Z.
\end{equation}
Moreover, $u$ is continuous and there is a constant $C_2$ such that for all $u \in Z_T$,
\begin{equation}
\|u\|_{C^0} \le C_2\|u\|_Z.
\end{equation}


\subsection{The ball $B$ and $B'$} 

Now we fix $f_0 \in W^{3,2}(\Sigma)$ throughout the section and thereafter.
Consider the operator $\partial_t - e^{-2u}\Delta$.
If $\|u\|_{C^0} \le 1$, this operator is uniformly elliptic.
So, Proposition 2.3 of \cite{MM12} then says that the map $ f\mapsto \big(f_0, (\partial_t-e^{-2u}\Delta)f \big)$ is a linear isomorphism
\[
Y_T \to W^{3,2}(\Sigma) \times X_T.
\]
Hence there is a constant $C_1$ such that for each $f_0 \in W^{3,2}(\Sigma)$ and $g \in X_T$, there is  a unique solution $h(t,x) \in Y_T$ of the initial value problem 
\begin{align}\label{A2}
(\partial_t-e^{-2u}\Delta)h=g \qquad h(0)=f_0
\end{align}
with
\begin{align}\label{A2b}
\|h\|_Y\le C_1 \big(\|f_0\|_{3,2} + \|g\|_{X}\big).
\end{align}

  Let $h_0(t,x)$ be the unique solution of 
  \begin{align}\label{Ah_0eq}
(\partial_t-\Delta)h=0 \qquad h(0)=f_0.
\end{align}
   By \eqref{A2b}  there is a constant $C_0$, depending on $C_1$ and $\|f_0\|_{3,2}$ such that
\begin{equation} \label{C0}
 \|h_0\|_{Y}\ \le\ C_0.
 \end{equation}

  Because of \eqref{A1},  $\big\{f\in Y_T\, \big|\, f(0)=f_0\big\}$ is a closed affine subspace of $Y_T$.  Hence the ball 
\begin{align}\label{Aball}
 B = B_\delta\  =\ \big\{f\in Y_{T}\, \big|\, f(0)=f_0 \ \mbox{and}\  \|f-h_0\|_Y\le \delta \big\}
\end{align}
is a closed subset of $Y_T$.  Note that each $f\in B_\delta$ satisfies
\begin{align}\label{Aballbound}
\|f\|_Y\ \le\ \|f-h_0\|_Y+\|h_0\|_Y\ \le\ \delta+C_0 .
\end{align}
Also let the ball
\[
B' = B_{\delta'}' = \{u \in Z_T \, \big| \, u(0) = 0 \ \mbox{and} \ \|u\|_{Z} \le \delta'\}
\]
be a closed subset of $Z_T$.
Obviously $h_0 \in B_\delta$ and $0 \in B_{\delta'}'$.
For simplicity, we denote $B= B_\delta$ and $B' = B_{\delta'}'$.


Now fix $\delta>0$ and define
\begin{equation} \label{C3}
C_3 := 1600 C_0 C_1 C_2.
\end{equation}
Choose $\delta'$ small enough so that $C_2 \delta' < 1$ which implies $\|u\|_{C^0} \le 1$.
Also we assume $\delta' \le \frac{\delta}{C_3}$.


\section{Construction of operators}
\label{sec4}

In this section we will construct operators $S_1 : Y_T \times Z_T \to Y_T$ and $S_2 : Y_T \times Z_T \to Z_T$.
First fix $f \in Y_T$ and $u \in Z_T$.
$f$ and $u$ are considered to be fixed throughout this section and after unless we mention any choice of them.

First we show a lemma that is needed in several places.


\begin{lemma}\label{P0Lemma}
Fix $f_0 \in W^{3,2}(\Sigma)$.
Then there is an $T_0 = T_0(C_0,\delta,\delta')>0$ such that for all $T \le T_0$, for each $h \in B$ and $u_1,u_2 \in B'$,
\begin{equation}\label{AT0}
\|(e^{2u_2-2u_1}-1) \partial_t h\|_X \le \frac{C_3}{2C_1}\|u_1-u_2\|_Z.
\end{equation}
\end{lemma}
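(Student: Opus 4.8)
The plan is to establish the quantitative bound
\[
\|(e^{2u_2-2u_1}-1)\,\partial_t h\|_X\ \le\ c_*\,T^{1/4}\,\|u_1-u_2\|_Z\qquad\text{for all }T\le 1,
\]
with a constant $c_*=c_*(C_0,\delta,\delta')$ (also depending on universal constants), and then to set $T_0:=\min\bigl\{1,\,\bigl(\tfrac{C_3}{2C_1 c_*}\bigr)^{4}\bigr\}$; this depends only on $C_0,\delta,\delta'$, since $C_1$, $C_2$, and hence $C_3$, are determined once these are fixed (see \eqref{C3}). Throughout, every factor involving $h$ will be controlled through $\|h\|_Y\le\delta+C_0$ from \eqref{Aballbound}.

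Write $w=e^{\phi}-1$ with $\phi=2(u_2-u_1)$, so that $\nabla w=e^{\phi}\nabla\phi$, $\nabla^2 w=e^{\phi}(\nabla^2\phi+\nabla\phi\otimes\nabla\phi)$ and $w_t=e^{\phi}\phi_t$. Since $\|u_i\|_{C^0}\le C_2\delta'<1$ we have $0<e^{\phi}\le e^{4}$ on $\Sigma\times[0,T]$ and the pointwise bound $|w|\le|\phi|e^{|\phi|}\le 2e^{4}|u_1-u_2|$. The decisive observation is that $w$ is moreover \emph{small in $T$}: since $(u_1-u_2)(0)=0$ and $(u_1-u_2)_t\in L^2([0,T],L^2(\Sigma))$, we get $\|(u_1-u_2)(t)\|_{L^2(\Sigma)}\le t^{1/2}\|u_1-u_2\|_Z$, and interpolating this against $\max_t\|(u_1-u_2)(t)\|_{W^{2,2}(\Sigma)}\le c\|u_1-u_2\|_Z$ from \eqref{AZ2}, via the two-dimensional Gagliardo--Nirenberg inequality $\|v\|_{L^\infty(\Sigma)}\le c\|v\|_{L^2(\Sigma)}^{1/2}\|v\|_{W^{2,2}(\Sigma)}^{1/2}$, yields $\|u_1-u_2\|_{C^0}\le cT^{1/4}\|u_1-u_2\|_Z$, and hence $\|w\|_{C^0}\le cT^{1/4}\|u_1-u_2\|_Z$.

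Next I would expand, using $\|\cdot\|_X\le\|\cdot\|_{L^2}+\|\nabla^2(\cdot)\|_{L^2}+\|\partial_t(\cdot)\|_{L^2}$ together with the Leibniz rule, $\nabla^2(w\,h_t)=(\nabla^2 w)\,h_t+2\,\nabla w\cdot\nabla h_t+w\,\nabla^2 h_t$ and $\partial_t(w\,h_t)=w_t\,h_t+w\,h_{tt}$, and estimate each product by H\"older's inequality, placing the $h$-factor into a space controlled by $\|h\|_Y$ --- namely $h_t\in L^\infty([0,T],W^{1,2}(\Sigma))\cap L^4([0,T],C^0(\Sigma))$ (from \eqref{A1'} and $Y_T\hookrightarrow W^{1,4}([0,T],W^{1,4}(\Sigma))$), $\nabla h_t\in L^4(\Sigma\times[0,T])$ from \eqref{A1}, and $\nabla^2 h_t,h_{tt}\in L^2(\Sigma\times[0,T])$ directly --- and the $w$-factor into a space carrying a positive power of $T$: $w$ itself via the $C^0$-bound above, $\nabla^2\phi\in L^\infty([0,T],L^2(\Sigma))$ (or $L^4(\Sigma\times[0,T])$) from \eqref{AZ2} (resp.\ \eqref{AZ1}), $\nabla\phi\in L^\infty([0,T],L^8(\Sigma))$ from \eqref{AZ3}, $\phi_t\in L^4(\Sigma\times[0,T])$ from \eqref{AZ1}, combined with the trivial gains $\|\cdot\|_{L^q([0,T])}\le T^{1/q}\|\cdot\|_{L^\infty([0,T])}$ for $q<\infty$. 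The quadratic term $e^{\phi}\nabla\phi\otimes\nabla\phi$ in $\nabla^2 w$ is rendered linear in $u_1-u_2$ by bounding one of its two factors by $\|\nabla\phi\|_{L^\infty_t L^8_x}\le c\|u_1-u_2\|_Z\le 2c\delta'$ and absorbing $\delta'$ into $c_*$. Adding the resulting term-by-term estimates produces the displayed bound.

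The one step that needs a real idea --- the main obstacle --- is the pair of terms $w\,\nabla^2 h_t$ and $w\,h_{tt}$. For a general $h\in B$ the factors $\nabla^2 h_t$ and $h_{tt}$ are controlled only in $L^2(\Sigma\times[0,T])$ by $\delta+C_0$, with no decay as $T\to0$, so all the $T$-smallness of these terms must be supplied by $\|w\|_{C^0}$; the crude pointwise bound $|w|\le 2e^{4}|u_1-u_2|$ does not suffice, and one genuinely has to exploit $(u_1-u_2)(0)=0$ as in the second paragraph. Once that is in hand, every term of the expansion carries a positive power of $T$, and the choice of $T_0$ above gives \eqref{AT0}.
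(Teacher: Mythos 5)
Your proof is correct, but it follows a genuinely different route from the paper's. You prove a stronger statement --- a bound of the form $c_*T^{1/4}\|u_1-u_2\|_Z$ that actually vanishes as $T\to 0$ --- and the key idea you identify (using $(u_1-u_2)(0)=0$ together with $\|(u_1-u_2)(t)\|_{L^2}\le t^{1/2}\|u_1-u_2\|_Z$ and the two-dimensional Gagliardo--Nirenberg interpolation $\|v\|_{L^\infty}\lesssim\|v\|_{L^2}^{1/2}\|v\|_{W^{2,2}}^{1/2}$ to get $\|u_1-u_2\|_{C^0}\lesssim T^{1/4}\|u_1-u_2\|_Z$) is a real and valid one; all of your term-by-term estimates then go through. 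The paper, however, does \emph{not} try to make the terms $w\,\nabla^2 h_t$ and $w\,h_{tt}$ small in $T$: it estimates them by the ``crude'' pointwise bound $\|w\|_{C^0}\le 2e^4\,C_2\|u_1-u_2\|_Z$ and absorbs the resulting non-decaying contribution $\approx 400\,C_2 C_0\|u_1-u_2\|_Z$ into the constant $\tfrac{C_3}{2C_1}$, which is exactly why $C_3$ is defined in \eqref{C3} as $1600\,C_0C_1C_2$; only the remaining terms are pushed down via positive powers of $T$. So your remark that ``the crude pointwise bound does not suffice'' is slightly off: it does suffice for the stated constant $\tfrac{C_3}{2C_1}$ precisely because that constant was tailored to it. What your approach buys is independence from the particular numerical choice of $C_3$ (any positive constant on the right would do, for $T_0$ small), at the cost of the extra interpolation step; what the paper's approach buys is avoiding that interpolation, at the cost of needing $C_3$ to have been set up in advance.
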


\begin{proof}
Denote
\[
g := (e^{2u_2-2u_1}-1) \partial_t h.
\]
Recall that 
\[
\left|e^{2u_1-2u_2}-1 \right| \le e^{2|u_1-u_2|} \left|1-e^{-2|u_1-u_2|} \right| \le 2e^4 |u_1-u_2|
\]
if $\|u_1-u_2\|_{C^0} \le 2$, which comes from $u_1,u_2 \le B'$.
Using $2e^4 \le 200$ and by \eqref{A1'} and \eqref{Aballbound},
\begin{align*}
\|g\|_{L^2}^2 &\le 200^2 \|u_1-u_2\|_{C^0}^2 \max_{0 \le t \le T} \|\partial_t h_2(t)\|_{L^2}^2 \, T \\
&\le \frac{C_3^2}{16C_1^2}\|u_1-u_2\|_{Z}^2 
\end{align*}
if we choose $T$ small enough.

Next, consider $\left| \nabla^2 g \right|^2 $.
\begin{align*}
\left| \nabla^2 g \right| &= \left| \nabla^2 \left( (e^{2u_2-2u_1}-1) \partial_t h_2  \right) \right| \\
&\le 800 |u_1-u_2| |\nabla (u_1-u_2)|^2 |\partial_t h_2| + 400 |u_1-u_2| |\nabla^2 (u_1-u_2)| |\partial_t h_2|\\
&\quad + 400 |u_1-u_2| |\nabla (u_1-u_2)| |\nabla \partial_t h_2| + 200 |u_1-u_2| |\nabla^2 \partial_t h_2|.
\end{align*}
Hence, by integrating, we have
\begin{align*}
\|\nabla^2 g\|_{L^2}^2 &\le 1600^2 \|u_1-u_2\|_{C^0}^2 \|\nabla (u_1-u_2)\|_{L^8}^4 \max_{0 \le t \le T} \|\partial_t h_2(t)\|_{L^4(\Sigma)}^2 \, T^{1/2}\\
&\quad + 800^2 \|u_1-u_2\|_{C^0}^2 \|\nabla^2 (u_1-u_2)\|_{L^4}^2  \max_{0 \le t \le T} \|\partial_t h_2(t)\|_{L^8(\Sigma)}^4 \, T^{1/2}\\
&\quad + 800^2 \|u_1-u_2\|_{C^0}^2  \max_{0 \le t \le T} \|\nabla (u_1(t)-u_2(t))\|_{L^4(\Sigma)}^2 \|\nabla \partial_t h_2\|_{L^4}^2 \, T^{1/2}\\
&\quad + 400^2 \|u_1-u_2\|_{C^0}^2 \|\nabla^2 \partial_t h_2\|_{L^2}^2\\
&\le 400^2 C_2^2 \|u_1-u_2\|_Z^2 2 C_0^2\\
&= \frac{C_3^2}{8C_1^2} \|u_1-u_2\|_Z^2
\end{align*}
if we choose $T$ small enough.

Finally, we will compute  $\|g_t\|_{L^2}^2$.
\begin{align*}
|g_t| &\le 400 |u_1-u_2||\partial_t h_2| |(u_1-u_2)_t| + 200 |u_1-u_2| |\partial_{tt} h_2|.
\end{align*}
Hence,
\begin{align*}
\|g_t\|_{L^2}^2 &\le 2(400)^2 \|u_1-u_2\|_{C^0}^2 \|(u_1-u_2)_t\|_{L^4}^2 \max_{0 \le t \le T} \|\partial_t h_2\|_{L^4(\Sigma)}^2 \, T^{1/2}\\
&\quad + 2(200)^2 \|u_1-u_2\|_{C^0}^2 \|\partial_{tt} h_2\|_{L^2}^2  \\
&\le 2(200)^2 C_2^2 \|u_1-u_2\|_Z^2  2 C_0^2\\
&= \frac{C_3^2}{16C_1^2} \|u_1-u_2\|_Z^2
\end{align*}
if we choose $T$ small enough.

Combining all the estimates above, we get
\[
\|(e^{2u_2-2u_1}-1) \partial_t h\|_X \le \frac{C_3}{2C_1} \|u_1-u_2\|_Z
\]
which proves the lemma.
\end{proof}


\subsection{The construction $S_1$}  Define an operator
\[
S_1:  Y_T \times Z_T \to Y_T
\]
by $S_1(f,u)=h$ where $h\in Y_{T}$ is the unique solution of
\begin{align}\label{A3}
(\partial_t-e^{-2u}\Delta)h=e^{-2u} A_f(df,df)\qquad h(0)=f_0.
\end{align}


 \begin{lemma}\label{P1Lemma1}
 Fix  $f_0\in W^{3,2}(\Sigma)$.
  Then there is $T_0 = T_0(C_0,\delta,\delta') >0$ such that for all $T \le T_0$,
$S_1$ restricts to an operator $S_1 :  B \times B' \to  B$.
\end{lemma}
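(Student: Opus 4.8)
The plan is to show that for $h = S_1(f,u)$ with $f \in B$ and $u \in B'$, one has $\|h - h_0\|_Y \le \delta$, so that $h \in B$. Since $h$ solves $(\partial_t - e^{-2u}\Delta)h = e^{-2u}A_f(df,df)$ with $h(0) = f_0$, and $h_0$ solves $(\partial_t - \Delta)h_0 = 0$ with $h_0(0) = f_0$, the difference $w = h - h_0$ satisfies $w(0) = 0$ and
\[
(\partial_t - e^{-2u}\Delta)w = e^{-2u}A_f(df,df) + (e^{-2u}-1)\Delta h_0 .
\]
By the isomorphism estimate \eqref{A2b} applied to the operator $\partial_t - e^{-2u}\Delta$ (which is uniformly elliptic since $\|u\|_{C^0} \le C_2\delta' < 1$), we get
\[
\|w\|_Y \le C_1\big( \|e^{-2u}A_f(df,df)\|_X + \|(e^{-2u}-1)\Delta h_0\|_X \big),
\]
so it suffices to bound each $X$-norm on the right by $\tfrac{\delta}{2C_1}$ after choosing $T$ small.

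For the first term, I would estimate $\|e^{-2u}A_f(df,df)\|_X$ using that $A_f$ is the (smooth, bounded with bounded derivatives on the compact $N$) second fundamental form, $\|u\|_{C^0} \le 1$, and the embeddings \eqref{A1}, \eqref{A1'}, \eqref{A1''}, \eqref{AZ1}--\eqref{AZ3}. The $L^2$ part is controlled by $\|df\|_{C^0}^4 \cdot T \le c(\delta + C_0)^4 T$ via \eqref{A1} and \eqref{Aballbound}; the $|f_t|$-type part of the $X$-norm is not present here since $e^{-2u}A_f(df,df)$ is the right-hand side $g$, so I need $\|g\|_{L^2}$, $\|\nabla^2 g\|_{L^2}$ and $\|g_t\|_{L^2}$. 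The $\nabla^2$ term expands via the product rule into terms like $|\nabla^2 u||df|^2$, $|\nabla u|^2|df|^2$, $|\nabla u||df||\nabla^2 f|$ (using $\nabla(df) = \nabla^2 f$), $|\nabla^2 f||df|$ and $|\nabla f|^4$; each is handled by Hölder with the $L^4$/$L^8$/$C^0$ bounds from \eqref{A1}, \eqref{A1''}, \eqref{AZ1}, \eqref{AZ3} together with a positive power of $T$ from the time integration (since all spatial norms are uniformly bounded in $t$). Similarly $g_t$ produces $|u_t||df|^2$, $|df||\nabla f_t|$ and $|\nabla u||\nabla f_t|$-type terms, again bounded by $c(\delta,C_0,C_2)T^{\alpha}$. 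For the second term, $\|(e^{-2u}-1)\Delta h_0\|_X$, I use $|e^{-2u}-1| \le c\|u\|_{C^0} \le cC_2\delta'$ and the fixed bound $\|h_0\|_Y \le C_0$ from \eqref{C0}; this is exactly the kind of estimate proved in Lemma \ref{P0Lemma} (with $u_1 = u$, $u_2 = 0$, $h = h_0$), so one gets $\|(e^{-2u}-1)\Delta h_0\|_X \le \tfrac{C_3}{2C_1}\|u\|_Z \le \tfrac{C_3\delta'}{2C_1} \le \tfrac{\delta}{2C_1}$ by the choice $\delta' \le \delta/C_3$ and $C_3 = 1600C_0C_1C_2$, after replacing $\Delta h_0 = \partial_t h_0$ (since $h_0$ solves the heat equation).

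The main obstacle is purely bookkeeping: I must verify that every term arising from expanding $\nabla^2\big(e^{-2u}A_f(df,df)\big)$ and $\partial_t\big(e^{-2u}A_f(df,df)\big)$ carries a strictly positive power of $T$ once all spatial Sobolev norms have been absorbed into the $t$-uniform bounds $\delta + C_0$ (for $f$) and $C_2$ (for $u$). This is where the choice of the spaces $Y_T, Z_T$ is essential — the embeddings \eqref{A1}--\eqref{A1''} and \eqref{AZ1}--\eqref{AZ3} are precisely tuned so that the highest-order terms ($\|\nabla^2 f\|_{L^4}$ via $\|\nabla^3 f\|_{L^4}$... actually $\|f\|_{W^{2,8}}$, $\|\nabla^2 u\|_{L^4}$, $\|u_t\|_{L^4}$, $\|\nabla f_t\|_{L^4}$) appear with an $L^4$-in-time norm, leaving room for an $L^4$- or $L^8$-in-time Hölder factor that is $O(T^{1/4})$ or better. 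Granting that, choosing $T_0$ small enough that the first term is $\le \tfrac{\delta}{2C_1}$, and invoking $\delta' \le \delta/C_3$ for the second, gives $\|w\|_Y \le \delta$, hence $h \in B$; and since $h$ is by construction the unique $Y_T$-solution of \eqref{A3}, this shows $S_1$ maps $B \times B'$ into $B$. One also notes that $T_0$ depends only on $C_0, \delta, \delta'$ (through the fixed constants $C_1, C_2, C_3$ and the geometry of $N$), as claimed.
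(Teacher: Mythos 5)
Your proposal is correct and follows essentially the same route as the paper's proof: you write $w = h - h_0$ satisfying $(\partial_t - e^{-2u}\Delta)w = e^{-2u}A_f(df,df) + (e^{-2u}-1)\Delta h_0$, apply the isomorphism estimate \eqref{A2b}, bound the nonlinear source in $X$ by using \eqref{A1}--\eqref{A1''} and \eqref{AZ1}--\eqref{AZ3} together with positive powers of $T$, and handle the second term by replacing $\Delta h_0$ with $\partial_t h_0$ and invoking \Cref{P0Lemma} with $u_1 = u$, $u_2 = 0$, exactly as the paper does. The only difference is that the paper carries through the bookkeeping of the $\nabla^2 g$ and $g_t$ terms explicitly, while you have (accurately) described the mechanism without writing out every term.
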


\begin{proof}
We also can assume $\|A\|, \|DA\|, \|D^2 A\|, \|D^3 A\| \le {c}$ where ${c}$ depends only on the geometry of $N$.
Then the  vector-valued function $A_f(df,df)$ satisfies the pointwise bound $|A_f(df,df)|^2\le {c}|df|^4$.
Fix $f \in B$ and $u \in B'$.

     Now we estimate $X$ norm of
 \[
 g=e^{-2u(t)} A_f(df,df).
 \]
 First, $|g|^2 \le {c}|df|^4$, so $\|g\|_{L^2}^2 \le {c}\|f\|_Y^4 |\Sigma| T$.
 Hence if we choose $T$ small enough, we have $\|g\|_{L^2}^2 \le \frac{\delta^2}{6C_1^2}$.
Next, compute $|\nabla^2 g|^2$.
 \begin{align*}
 |\nabla^2 g| &\le {c} |df|^2 |\nabla^2 u| + {c} |df|^2 |\nabla u|^2 + {c} |df|^3 |\nabla u| + {c}|\nabla df| |df| |\nabla u|\\
 &\quad + {c}|df|^4 + {c}|df|^2 |\nabla df| + {c} |\nabla^3 f| |df| + {c} |\nabla df|^2.
 \end{align*}
 So, using Young's inequality 
 \[
 {c}\|df\|_{C^0}^2 \iint |\nabla df|^2 |\nabla u|^2 \le {c}\|df\|_{C^0}^4 \iint |\nabla u|^4 + {c} \iint |\nabla df|^4,
 \]
 we get, by \eqref{A1}, \eqref{A1'}, \eqref{A1''}, \eqref{AZ2}  and \eqref{Aballbound},
 \begin{align*}
 \|\nabla^2 g\|_{L^2}^2 &\le {c} \|df\|_{C^0}^4 \iint (|\nabla^2 u|^2 + |\nabla u|^4) + {c} \|df\|_{C^0}^6 \iint |\nabla u|^2\\
 &\quad + {c} \|df\|_{C^0}^2 \iint |\nabla df|^2 |\nabla u|^2  + {c} \|df\|_{C^0}^8 |\Sigma| T + {c} \|df\|_{C^0}^4 \iint |\nabla df|^2 \\
 &\quad + {c} \|df\|_{C^0}^2 \iint |\nabla^3 f|^2 + {c} \iint |\nabla df|^4\\
 &\le {c} \|f\|_Y^4 \left( \max_{0 \le t \le T} \| \nabla^2 u(t)\|_{L^2(\Sigma)}^2 +  \max_{0 \le t \le T}  \|\nabla u(t)\|_{L^4(\Sigma)}^4 \right) T\\
 &\quad  + {c}\|f\|_Y^6  \max_{0 \le t \le T}  \|\nabla u(t)\|_{L^2(\Sigma)}^2 T + {c}\|f\|_Y^8 |\Sigma| T \\
 &\quad + {c} \|f\|_Y^4 \max_{0 \le t \le T} \|\nabla df (t)\|_{L^2(\Sigma)}^2 T + {c} \|f\|_Y^2  \max_{0 \le t \le T}\|\nabla^3 f(t)\|_{L^2(\Sigma)}^2 T \\
 &\quad  + {c} \max_{0 \le t \le T} \|\nabla df\|_{L^4(\Sigma)}^4 T\\
 &\le \frac{\delta^2}{6C_1^2}
\end{align*}
if we choose $T$ small enough.
 Finally,
\begin{align*}
|g_t| &\le {c} |df|^2 |u_t| + {c} |df|^2 |f_t| + {c}|df_t| |df|
\end{align*}
and
\begin{align*}
\|g_t\|_{L^2}^2 &\le {c}\|df\|_{C^0}^4 \iint |u_t|^2 + |f_t|^2 + {c} \|df\|_{C^0}^2 \iint |df_t|^2\\
&\le {c}\|f\|_{Y}^4  \left( \max_{0 \le t \le T} \|u_t(t) \|_{L^2(\Sigma)}^2 + \max_{0 \le t \le T} \|f_t(t)\|_{L^2(\Sigma)}^2 \right) T \\
&\quad + {c} \|f\|_Y^2 \max_{0 \le t \le T} \|\nabla f_t\|_{L^2(\Sigma)}^2 T\\
&\le \frac{\delta^2}{6C_1^2}
\end{align*}
 if we choose $T$ small enough.
 
 Therefore, if we choose $T$ small enough, we have $\|g\|_X^2 \le \frac{\delta^2}{2C_1^2}$.
  Noting that $S(f)-h_0=h-h_0$ satisfies
  \[
 (\partial_t-e^{-2u}\Delta)(h-h_0)=g + (e^{-2u}-1) \Delta h_0 \qquad (h-h_0)(0)=0.
 \] 
 The bounds \eqref{A2b} give
 \begin{align*}
\|S(f)-h_0\|_Y^2\ & \le\   C_1^2 \left( \|g\|_{X}^2 + \|(e^{-2u}-1) \Delta h_0\|_{X}^2 \right)\\
&= C_1^2 \left( \|g\|_X^2 +  \|(e^{-2u}-1) \partial_t h_0\|_{X}^2 \right)
\end{align*}
because $h_0$ satisfies \eqref{Ah_0eq}.

Now by \Cref{P0Lemma} with $h=h_0$, $u_1=u$, $u_2=0$,
\begin{equation*}
\|(e^{-2u}-1) \partial_t h_0\|_{X} \le \frac{C_3}{2C_1}\|u\|_Z \le \frac{\delta}{2C_1}.
\end{equation*}
 
 This implies
 \[
 \|S(f)-h_0\|_Y^2\ \le\   C_1^2 \left( \frac{\delta^2}{2C_1^2} +\frac{\delta^2}{4C_1^2} \right) \le \delta^2.
 \]
  Therefore $S(f)\in B$.
  \end{proof}

\begin{lemma}\label{P1Lemma2}
Fix $f_0 \in W^{3,2}(\Sigma)$ and $u \in B'$.
Then there is an $T_0 = T_0(C_0,\delta,\delta')>0$ such that for all $T \le T_0$ and for each $f_1,f_2\in B$,   
 \begin{align}\label{AT}
\|S_1(f_1,u)-S_1(f_2,u)\|_Y \le \frac{1}{3} \|f_1-f_2\|_Y.
 \end{align}
\end{lemma}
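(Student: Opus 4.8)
The plan is to exploit the linear isomorphism behind the definition of $S_1$ together with the observation that $A_f(df,df)$ is quadratic in $df$ and Lipschitz in $f$, so that the difference $A_{f_1}(df_1,df_1)-A_{f_2}(df_2,df_2)$ carries an explicit factor of $f_1-f_2$ or its derivatives. Concretely, I would set $h_i=S_1(f_i,u)$ for $i=1,2$; these lie in $Y_T$ (in fact in $B$, by \Cref{P1Lemma1}) and solve \eqref{A3} with the same operator $\partial_t-e^{-2u}\Delta$ and the same initial value $f_0$. Subtracting, the difference $w:=h_1-h_2$ solves
\[
(\partial_t-e^{-2u}\Delta)w = G,\qquad w(0)=0,\qquad G:=e^{-2u}\big(A_{f_1}(df_1,df_1)-A_{f_2}(df_2,df_2)\big).
\]
Since $\|u\|_{C^0}\le 1$, the bound \eqref{A2b} applies to $w$ and gives $\|w\|_Y\le C_1\|G\|_X$, so \eqref{AT} follows once we show $\|G\|_X\le\tfrac{1}{3C_1}\|f_1-f_2\|_Y$ for $T$ small.

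For the pointwise structure of $G$ I would use the algebraic identity
\[
A_{f_1}(df_1,df_1)-A_{f_2}(df_2,df_2)=(A_{f_1}-A_{f_2})(df_1,df_1)+A_{f_2}(df_1-df_2,df_1)+A_{f_2}(df_2,df_1-df_2),
\]
together with $\|A\|,\|DA\|,\|D^2A\|,\|D^3A\|\le {c}$ (depending only on $N$), exactly as in \Cref{P1Lemma1}. Expanding $G$, $\nabla^2 G$ and $G_t$ by the product rule, each of the finitely many resulting monomials is $e^{-2u}$ times (i) exactly one factor equal to $f_1-f_2$ or one of its derivatives of spatial order $\le 3$ (of order $\le 1$ in the time-derivative monomials), and (ii) remaining factors which are derivatives of $f_1$, $f_2$ and $u$ of controlled order: derivatives hitting $A_{f_i}$ produce $\nabla f_i$ or $f_{i,t}$, derivatives hitting $A_{f_1}-A_{f_2}$ produce either $|f_1-f_2|$ times a derivative of $f_i$ or a derivative of $f_1-f_2$, and derivatives hitting $e^{-2u}$ produce $\nabla u$ or $\nabla^2 u$.

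To finish I would estimate each monomial in $L^2(\Sigma\times[0,T])$ in the manner of the proofs of \Cref{P0Lemma} and \Cref{P1Lemma1}: the $(f_1-f_2)$-factor is controlled, by \eqref{A1}, \eqref{A1'}, \eqref{A1''}, in one of the norms $\|f_1-f_2\|_{C^0}$, $\|\nabla(f_1-f_2)\|_{C^0}$, $\|\nabla^3(f_1-f_2)\|_{L^4}$, $\|\nabla(f_1-f_2)_t\|_{L^4}$, $\max_t\|(f_1-f_2)(t)\|_{W^{3,2}}$, $\max_t\|(f_1-f_2)(t)\|_{W^{2,8}}$ or $\max_t\|(f_1-f_2)_t(t)\|_{W^{1,2}}$, each $\le {c}\|f_1-f_2\|_Y$; the $f_i$-factors are controlled by $\|f_i\|_Y\le\delta+C_0$ via the same inequalities and \eqref{Aballbound}; the $u$-factors are controlled via \eqref{AZ2}, \eqref{AZ3} and $\|u\|_{C^0}\le 1$. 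Placing one factor of each monomial in a maximum-in-time norm (or exploiting the spare room in the space--time Lebesgue exponents) extracts a factor $\int_0^T dt\le T$, so every monomial is bounded by ${c}(\delta,C_0)\,T^{\alpha}\|f_1-f_2\|_Y$ for some $\alpha>0$. Summing and choosing $T_0=T_0(C_0,\delta,\delta')$ with ${c}(\delta,C_0)T_0^{\alpha}\le\tfrac{1}{3C_1}$ yields \eqref{AT}.

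The main obstacle is purely the top-order bookkeeping: one must verify that the monomials in which the highest derivatives land on $f_1-f_2$ — namely $|\nabla^3(f_1-f_2)|\,|df_1|$ and the mixed term $|\nabla^2(f_1-f_2)|\,|\nabla df_1|$ from $\nabla^2\big(e^{-2u}A_{f_2}(df_1-df_2,df_1)\big)$, the companion terms $|f_1-f_2|\,|\nabla^3 f_i|$ from $\nabla^2\big((A_{f_1}-A_{f_2})(df_1,df_1)\big)$, and $|\nabla(f_1-f_2)_t|\,|df_1|$ in $G_t$ — can all be absorbed with a strictly positive power of $T$. This forces one to pair $L^\infty_tL^2_x$ control of $\nabla^3(f_1-f_2)$ and of $(f_1-f_2)_t$ (from \eqref{A1'}) with $C^0$ or $L^\infty_tL^8_x$ control of $df_1$ and $\nabla df_1$ (from \eqref{A1}, \eqref{A1''}), since the bare $L^4$ space--time norms do not on their own supply the needed $T$-power. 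Once these few terms are handled, the remainder is a routine repetition of the Hölder--Young estimates already carried out for $\|g\|_X$ in \Cref{P1Lemma1}.
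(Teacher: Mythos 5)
Your proposal is correct and follows essentially the same route as the paper: subtract the two parabolic problems to reduce to bounding $\|g_1-g_2\|_X$ via \eqref{A2b}, decompose the difference $A_{f_1}(df_1,df_1)-A_{f_2}(df_2,df_2)$ so that each resulting monomial carries a factor of $f_1-f_2$ or one of its derivatives, and estimate term by term using \eqref{A1}, \eqref{A1'}, \eqref{A1''}, \eqref{AZ2}, \eqref{AZ3} and \eqref{Aballbound} to extract a positive power of $T$. (Your three-term splitting is the paper's two-term $I+II$ with $II$ expanded by bilinearity, and your discussion of the top-order monomials $|\nabla^3(f_1-f_2)|\,|df_1|$, $|\nabla(f_1-f_2)_t|\,|df_1|$, etc.\ matches exactly how the paper pairs $\max_t\|\cdot\|_{L^2}$ or $\max_t\|\cdot\|_{L^4}$ control on the difference factor with $C^0$ control on the remaining factors.)
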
  
  
\begin{proof}
 Set $h_i=S_1(f_i,u)$ and  $g_i=e^{-2u} A_{f_i}(df_i,df_i)$ for $i=1,2$ and subtracting, the function $h_1-h_2$ satisfies
\[
(\partial_t-e^{-2u}\Delta)(h_1-h_2)=g_1-g_2 \qquad (h_1-h_2)(0)= 0.
\]
Hence \eqref{A2} gives a bound
\begin{align}\label{A7}
\|h_1-h_2\|_Y \le C_1 \|g_1-g_2\|_{X}.
\end{align}
Next, we have
\begin{align*}
 g_1-g_2\ &= \  e^{-2u} (A_{f_1}-A_{f_2})(df_1,df_1) + e^{-2u} A_{f_2}(df_1+df_2,df_1-df_2) \\
 &= I + II.
 \end{align*}
 So,  there is a constant ${c}$ with
 \[
 |g_1-g_2|^2\ \le \ {c} |{f_1}-f_2|^2 |df_1|^4 +  {c} |df_1-df_2|^2 \left(|df_1|^2+|df_2|^2\right).
 \]
 Integrating and applying  Holder's inequality,   \eqref{A1}, and \eqref{Aballbound} gives
 \begin{align*}
\|g_1-g_2\|_{L^2}^2 & \le   {c}  \|{f_1}-f_2\|_{C^0}^2 \iint |df_1|^4 \ +\ {c} \|df_1-df_2\|_{L^4}^2 \left( \|df_1\|_{L^4}^2 +\|df_2\|_{L^4}^2 \right) \\
& \le    {c}  \|{f_1}-f_2\|_Y^2   \|f_1\|_Y^4 |\Sigma| T  \ +\ {c} \|{f_1}-f_2\|_Y^2 (\|f_1\|_Y^2 + \|f_2\|_Y^2) |\Sigma|^{1/2} T^{1/2}\\
& \le    \frac{1}{27C_1^2}\,   \|{f_1}-f_2\|_Y^2
  \end{align*}
  if we choose $T$ small enough.

For $\nabla^2 (g_1-g_2)$, first note that
\begin{align*}
\nabla (A_{f_1}-A_{f_2}) &= DA_{f_1} df_1 - DA_{f_2} df_2 = (DA_{f_1} - DA_{f_2}) df_1 + DA_{f_2} (df_1-df_2)\\
\nabla (DA_{f_1} - DA_{f_2}) & = (D^2 A_{f_1} - D^2 A_{f_2})df_1 + D^2 A_{f_2}(df_1-df_2).
\end{align*}
So, we get
\begin{align*}
|\nabla^2 I| &\le {c}|f_1-f_2| |df_1|^2 |\nabla^2 u| + {c} |f_1-f_2| |df_1|^2 |\nabla u|^2 + {c} |f_1-f_2| |df_1|^3 |\nabla u| \\
&\qquad + {c} |df_1-df_2| |df_1|^2 |\nabla u| + {c} |f_1-f_2| |\nabla df_1| |df_1| |\nabla u|\\
&\quad + {c} |f_1-f_2||df_1|^4 + {c}|f_1-f_2| |\nabla df_1| |df_1|^2\\
&\qquad  + {c}|df_1-df_2||df_2||df_1|^2 + {c} |\nabla df_1-\nabla df_2| |df_1|^2\\
&\qquad + {c}|df_1-df_2| |\nabla df_1| |df_1|\\
&\quad + {c} |f_1-f_2| |\nabla^3 f_1| |df_1| + {c} |f_1-f_2| |\nabla df_1|^2.
\end{align*}
Using \eqref{A1}, \eqref{A1'}, \eqref{A1''}, \eqref{AZ2}, \eqref{AZ3}, and using Young's inequality, we can estimate it term by term.
\begin{align*}
\iint |f_1-f_2|^2 |df_1|^4 |\nabla^2 u|^2 &\le \|f_1-f_2\|_Y^2 \|f_1\|_Y^4 \max_{0 \le t \le T} \|\nabla^2 u(t)\|_{L^2(\Sigma)}^2 T\\
\iint |f_1-f_2|^2 |df_1|^4 |\nabla u|^4 &\le \|f_1-f_2\|_Y^2 \|f_1\|_Y^4 \max_{0 \le t \le T} \|\nabla u(t)\|_{L^4(\Sigma)}^4 T\\
\iint |f_1-f_2|^2 |df_1|^6 |\nabla u|^2 &\le \|f_1-f_2\|_Y^2 \|f_1\|_Y^6 \max_{0 \le t \le T} \|\nabla u(t)\|_{L^2(\Sigma)}^2 T\\
\iint |f_1-f_2|^2 |\nabla df_1|^2 |df_1|^2 |\nabla u|^2 &\le \|f_1-f_2\|_Y^2 \|f_1\|_Y^4 \max_{0 \le t \le T} \|\nabla u(t)\|_{L^4(\Sigma)}^4 T\\
&\quad + \|f_1-f_2\|_Y^2 \max_{0 \le t \le T} \|\nabla df_1(t)\|_{L^4(\Sigma)}^4 T\\
\end{align*}
\begin{align*}
\iint |f_1-f_2|^2 |df_1|^8 &\le \|f_1-f_2\|_Y^2 \|f_1\|_Y^8 |\Sigma| T\\
\iint |f_1-f_2|^2 |\nabla df_1|^2 |df_1|^4 &\le \|f_1-f_2\|_Y^2 \|f_1\|_Y^4 \max_{0 \le t \le T} \|\nabla df_1(t)\|_{L^2(\Sigma)}^2 T\\
\iint |df_1-df_2|^2 |df_2|^2 |df_1|^4 &\le \|f_1-f_2\|_Y^2 \|f_2\|_Y^2 \|f_1\|_Y^4 |\Sigma| T\\
\iint |\nabla df_1 - \nabla df_2|^2 |df_1|^4 &\le \|f_1\|_Y^4 \max_{0 \le t \le T} \|\nabla df_1(t) - \nabla df_2(t)\|_{L^2(\Sigma)}^2 T\\
\iint |df_1-df_2|^2 |\nabla df_1|^2 |df_1|^2 &\le \|f_1-f_2\|_Y^2 \|f_1\|_Y^2 \max_{0 \le t \le T} \|\nabla df_1(t)\|_{L^2(\Sigma)}^2 T\\
\iint |f_1-f_2|^2 |\nabla^3 f_1|^2 |df_1|^2 &\le \|f_1-f_2\|_Y^2 \|f_1\|_Y^2 \max_{0 \le t \le T} \|\nabla^3 f_1(t)\|_{L^2(\Sigma)}^2 T\\
\iint |f_1-f_2|^2 |\nabla df_1|^4 &\le \|f_1-f_2\|_Y^2 \max_{0 \le t \le T} \|\nabla df_1(t)\|_{L^4(\Sigma)}^4 T.
\end{align*}

Hence, using \eqref{Aballbound}, if we choose $T$ small enough, we get
\begin{align*}
\|\nabla^2 I\|_{L^2}^2 & \le \frac{1}{54C_1^2} \|f_1-f_2\|_Y^2.
\end{align*}
We obtain similar result for $II$ if we choose $T$ small enough:
\begin{align*}
\|\nabla^2 II\|_{L^2}^2 & \le \frac{1}{54C_1^2} \|f_1-f_2\|_Y^2.
\end{align*}
Hence, we obtain that $\|\nabla^2 (g_1-g_2)\|_{L^2}^2 \le \frac{1}{27C_1^2} \|f_1-f_2\|_Y^2$.

Finally, compute $\partial_t (g_1-g_2)$.
As above, note that
\[
\partial_t (A_{f_1}-A_{f_2}) = DA_{f_1} \partial_t f_1 - DA_{f_2} \partial_t f_2 = (DA_{f_1}-DA_{f_2}) \partial_t f_1 + DA_{f_2} (\partial_t (f_1-f_2)).
\]
So,
\begin{align*}
|\partial_t (g_1-g_2)| &\le {c}|f_1-f_2| |df_1|^2 |u_t| + {c}|f_1-f_2| |\partial_t f_1| |df_1|^2 + {c}|\partial_t (f_1-f_2)| |df_1|^2\\
&\quad  + {c}|f_1-f_2| |\partial_t df_1| |df_1|  \\
&\quad + {c} |df_1+df_2| |df_1-df_2| |u_t| +{c} |\partial_t f_2| |df_1+df_2| |df_1-df_2| \\
&\quad  + {c}|\partial_t (df_1+df_2)| |df_1-df_2| + {c} |df_1+df_2| |\partial_t (df_1-df_2)|.
\end{align*}
Similar with above, by \eqref{A1}, \eqref{A1'}, \eqref{A1''}, \eqref{AZ2}, \eqref{AZ3} and \eqref{Aballbound},
\begin{align*}
\|\partial_t& (g_1-g_2)\|_{L^2}^2 \le {c} \|f_1\|_Y^4 \|f_1-f_2\|_Y^2 \left( \max_{0 \le t \le T} \|u_t(t)\|_{L^2(\Sigma)}^2 + \max_{0 \le t \le T} \|\partial_t f_1(t)\|_{L^2(\Sigma)}^2 \right) T \\
&\,\, + {c} \|f_1\|_Y^4 \max_{0 \le t \le T} \|\partial_t (f_1(t)-f_2(t))\|_{L^2(\Sigma)}^2 T\\
&\,\, + {c} \|f_1\|_Y^2 \|f_1-f_2\|_Y^2 \max_{0 \le t \le T} \|\partial_t df_1(t) \|_{L^2(\Sigma)}^2 T\\
&\,\, + {c} (\|f_1\|_Y^2 + \|f_2\|_Y^2) \|f_1-f_2\|_Y^2  \left( \max_{0 \le t \le T} \|u_t(t)\|_{L^2(\Sigma)}^2 + \max_{0 \le t \le T} \|\partial_t f_2(t)\|_{L^2(\Sigma)}^2 \right) T\\
&\,\, + {c} \|f_1-f_2\|_Y^2 \left(\max_{0 \le t \le T} \|\partial_t df_1(t)\|_{L^2(\Sigma)}^2 + \max_{0 \le t \le T} \|\partial_t df_2(t)\|_{L^2(\Sigma)}^2 \right) T\\
&\,\, + {c} (\|f_1\|_Y^2 + \|f_2\|_Y^2)  \max_{0 \le t \le T} \|\partial_t (df_1(t)-df_2(t))\|_{L^2(\Sigma)}^2 T\\
&\le \frac{1}{27C_1^2} \|f_1-f_2\|_Y^2
\end{align*}
if we choose $T$ small enough.

Combine all of them, 
 \[
 \|h_1-h_2\|_Y \le C_1 \|g_1-g_2\|_X \le \frac{1}{3} \|f_1-f_2\|_Y
 \]
 which proves the lemma.
 \end{proof}

 \begin{lemma}\label{P1Lemma3}
 Fix $f_0 \in W^{3,2}(\Sigma)$ and $f \in B$.
Then there is an $T_0 = T_0(C_0,\delta,\delta')>0$ such that for all $T \le T_0$ and for each $u_1,u_2\in B'$,   
 \begin{align}\label{AT2}
\|S_1(f,u_1)-S_1(f,u_2)\|_Y \le \frac{C_3}{2} \|u_1-u_2\|_{Z}.
 \end{align}
\end{lemma}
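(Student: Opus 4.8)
The plan is to argue as in \Cref{P1Lemma2}, but here a crucial simplification is available: the inhomogeneous term of the equation defining $S_1$ involves the \emph{same} map $f$ for both $u_1$ and $u_2$, so the only difference between $h_1:=S_1(f,u_1)$ and $h_2:=S_1(f,u_2)$ comes from the coefficient $e^{-2u}$ of the parabolic operator and from the prefactor $e^{-2u}$ in front of $A_f(df,df)$. I would exploit this by differencing the two equations with respect to the \emph{same} operator $\partial_t-e^{-2u_1}\Delta$ and then using the equation for $h_2$ to trade the top-order term $\Delta h_2$ for $\partial_t h_2$.

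First I would subtract
\[
(\partial_t-e^{-2u_1}\Delta)h_1 = e^{-2u_1}A_f(df,df),\qquad (\partial_t-e^{-2u_2}\Delta)h_2 = e^{-2u_2}A_f(df,df),
\]
and rearrange, writing $e^{-2u_1}\Delta h_1 - e^{-2u_2}\Delta h_2 = e^{-2u_1}\Delta(h_1-h_2)+(e^{-2u_1}-e^{-2u_2})\Delta h_2$, to obtain
\[
(\partial_t-e^{-2u_1}\Delta)(h_1-h_2) = (e^{-2u_1}-e^{-2u_2})\bigl(\Delta h_2 + A_f(df,df)\bigr),\qquad (h_1-h_2)(0)=0.
\]
From the defining equation for $h_2$ one has $\Delta h_2 + A_f(df,df) = e^{2u_2}\partial_t h_2$, so the source term collapses to $(e^{2u_2-2u_1}-1)\partial_t h_2$. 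Since $u_1\in B'$ implies $\|u_1\|_{C^0}\le 1$, the operator $\partial_t-e^{-2u_1}\Delta$ is uniformly elliptic, so the isomorphism estimate \eqref{A2b} applied with zero initial data gives
\[
\|h_1-h_2\|_Y \le C_1\,\|(e^{2u_2-2u_1}-1)\partial_t h_2\|_X.
\]

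It remains to bound the right-hand side. Because $f\in B$ and $u_2\in B'$, \Cref{P1Lemma1} yields $h_2 = S_1(f,u_2)\in B$ (for $T\le T_0$), so \Cref{P0Lemma} applies with $h=h_2$ and the given $u_1,u_2$, giving
\[
\|(e^{2u_2-2u_1}-1)\partial_t h_2\|_X \le \frac{C_3}{2C_1}\,\|u_1-u_2\|_Z.
\]
Combining the last two displays produces $\|h_1-h_2\|_Y\le \tfrac{C_3}{2}\|u_1-u_2\|_Z$, as claimed; here $T_0$ is simply the minimum of the thresholds furnished by \Cref{P0Lemma} and \Cref{P1Lemma1}, with no further smallness of $T$ needed.

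I expect the only delicate point to be the bookkeeping in the first step — keeping straight which of $e^{-2u_1}\Delta$ and $e^{-2u_2}\Delta$ acts on which function, and correctly invoking $h_2$'s own PDE to eliminate $\Delta h_2$. Once that identity is in hand there is no analytic obstacle: the estimate follows immediately from \Cref{P0Lemma}, and in particular none of the term-by-term commutator estimates used in \Cref{P1Lemma2} are required.
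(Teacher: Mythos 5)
Your proposal is correct and reaches exactly the same differenced equation $(\partial_t-e^{-2u_1}\Delta)(h_1-h_2)=(e^{2u_2-2u_1}-1)\partial_t h_2$ as the paper, then closes it with \eqref{A2b} and \Cref{P0Lemma} just as the paper does; the only difference is a cosmetic one in the algebra (the paper first multiplies each equation by $e^{2u_i}$, while you keep the original form and eliminate $\Delta h_2+A_f(df,df)$ via $h_2$'s own PDE). Your explicit remark that $h_2\in B$ (via \Cref{P1Lemma1}) so that \Cref{P0Lemma} applies is a point the paper leaves implicit, and is a welcome clarification.
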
  
 
 \begin{proof}
Set $h_i=S_1(f,u_i)$.
Multiplying $e^{2u_i}$ to the equation for $h_i$ respectively and subtracting them gives
\begin{align*}
e^{2u_1} \partial_t (h_1 - h_2) - \Delta (h_1-h_2) &= -(e^{2u_1}-e^{2u_2}) \partial_t h_2\\
(\partial_t - e^{-2u_1} \Delta) (h_1-h_2) &= (e^{2u_2-2u_1}-1) \partial_t h_2.
\end{align*}

So, $h_1-h_2$ satisfies the estimate from \eqref{A2b}, and by \Cref{P0Lemma},
\[
\|h_1-h_2\|_Y \le C_1 \|(e^{2u_2-2u_1}-1) \partial_t h_2\|_X \le \frac{C_3}{2} \|u_1-u_2\|_Z
\]
if we choose $T$ small enough.

 \end{proof}


\subsection{The construction $S_2$}  Define an operator
\[
S_2:  Y_T \times Z_T \to Z_T
\]
by $S_2(f,u)=v$ where $v\in Z_{T}$ is the unique solution of
\begin{align}\label{A3'}
\partial_t v =b|df|^2 e^{-2u} - a \qquad v(0)=0.
\end{align}


\begin{lemma}
In the above definition, $v \in Z_T$.
\end{lemma}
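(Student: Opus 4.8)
The plan is to verify directly that the explicit solution $v(t,x) = \int_0^t \bigl(b|df|^2 e^{-2u} - a\bigr)(s,x)\,ds$ of \eqref{A3'} lies in $Z_T$, i.e. that
\[
\|v\|_Z^2 = \iint |\nabla^3 v|^2 + |v|^2 + |\nabla v_t|^2 + |v_t|^2 < \infty.
\]
Since $v(0)=0$ is automatic, the whole content is the four norm bounds. I would handle $\|v_t\|_{L^2}$ and $\|\nabla v_t\|_{L^2}$ first, since $v_t = b|df|^2 e^{-2u} - a$ directly, and then obtain the bounds on $v$ and $\nabla^3 v$ by integrating in time (which only costs a factor of $T$ and a constant).

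First I would estimate $v_t$. Pointwise $|v_t| \le b|df|^2 \|e^{-2u}\|_{C^0} + a \le c\,|df|^2 + a$ using $\|u\|_{C^0}\le 1$ on $B'$, so $\|v_t\|_{L^2}^2 \le c\|df\|_{C^0}^4 |\Sigma| T + a^2|\Sigma|T \le c(\|f\|_Y^4 + 1)$ by \eqref{A1}. Next, $\nabla v_t = 2b\,\langle \nabla df, df\rangle e^{-2u} - 2b|df|^2 e^{-2u}\nabla u$, so $|\nabla v_t| \le c|\nabla df||df| + c|df|^2|\nabla u|$; integrating and using Hölder together with \eqref{A1} and \eqref{AZ3} (to control $\|\nabla u\|_{L^8}$, hence $\|\nabla u\|_{L^4}$) gives $\|\nabla v_t\|_{L^2}^2 \le c\|f\|_Y^4 + c\|f\|_Y^4\max_t\|\nabla u(t)\|_{L^4}^2 \le c(\|f\|_Y^4 + \|f\|_Y^4\|u\|_Z^2) < \infty$. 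Then since $v(t) = \int_0^t v_t(s)\,ds$, Minkowski's integral inequality gives $\|v(t)\|_{L^2(\Sigma)} \le \int_0^t \|v_t(s)\|_{L^2(\Sigma)}\,ds$, hence $\|v\|_{L^2}^2 \le T^2 \|v_t\|_{L^2}^2 < \infty$, and likewise $\nabla^3 v = \int_0^t \nabla^3 v_t(s)\,ds$ so $\|\nabla^3 v\|_{L^2}^2 \le T^2\|\nabla^3 v_t\|_{L^2}^2$, reducing everything to showing $\nabla^3 v_t = \nabla^3\bigl(b|df|^2 e^{-2u}\bigr) \in L^2$.

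The one genuinely nontrivial estimate is therefore $\|\nabla^3\bigl(|df|^2 e^{-2u}\bigr)\|_{L^2}$. Expanding by the Leibniz rule produces terms schematically of the form $|df|^2|\nabla^3 u|$, $|df||\nabla^2 df||\nabla u|$ (and cousins with derivatives of $u$ distributed differently, up to $|df|^2|\nabla u|^3$, $|\nabla df|^2|df|$, $|\nabla^3 df||df|$, etc.), where $\nabla^3(df) = \nabla^4 f$. The worst ingredients are $\nabla^4 f \in L^2_{t,x}$ (available since $f\in Y_T$, by the very definition of $\|\cdot\|_Y$) and $\nabla^3 u \in L^2_{t,x}$ (available since $u\in Z_T$). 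These appear linearly, multiplied by $C^0$-bounded quantities $|df|$ (resp. $|df|^2$) via \eqref{A1}, so those terms are fine. The mixed terms with two or three derivatives landing in fractional-looking combinations are controlled by the $L^4$ bounds \eqref{A1} on $\nabla^3 f$ and $\nabla df$ and the $L^4$/$L^8$ bounds \eqref{AZ1}, \eqref{AZ3} on $\nabla^2 u$, $\nabla u$, together with Young's inequality exactly as in the proof of Lemma \ref{P1Lemma1}. The main obstacle — though it is bookkeeping rather than a real difficulty — is simply confirming that every one of the many Leibniz terms pairs a square-integrable top-order factor with factors that are bounded in the appropriate $L^p$ by the embeddings already recorded in Section \ref{sec3}; no new analytic input beyond $f\in Y_T$, $u\in Z_T$, and $\|u\|_{C^0}\le 1$ is needed, so I would state the term-by-term estimate briefly and conclude $\|v\|_Z < \infty$, i.e. $v\in Z_T$.
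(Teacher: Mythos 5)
Your proof is correct and takes essentially the same route as the paper: estimate $v_t$ and $\nabla v_t$ directly from $v_t = b|df|^2e^{-2u}-a$, then obtain $v$ and $\nabla^3 v$ from the time integral $v(t)=\int_0^t v_t$, paying the $T^2$ factor via Cauchy--Schwarz/Minkowski in time, and controlling the Leibniz expansion of $\nabla^3(|df|^2e^{-2u})$ by pairing the top-order factors $\nabla^4 f\in L^2$ and $\nabla^3 u\in L^2$ with $C^0$-bounded coefficients and handling the intermediate terms with \eqref{A1}, \eqref{A1''}, \eqref{AZ1}, \eqref{AZ3}. One small correction: the lemma is stated for all $u\in Z_T$, not just $u\in B'$, so replace the bound $\|u\|_{C^0}\le 1$ with the embedding $\|u\|_{C^0}\le C_2\|u\|_Z<\infty$ (which is exactly what the paper uses, noting $e^{-2u}\le e^{C\|u\|_Z}$ pointwise); the rest of your argument goes through unchanged.
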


\begin{proof}
 From \eqref{A3'}, we directly get
 \begin{equation}
 v(t) = \int_{0}^{t} (b|df|^2 e^{-2u} - a).
 \end{equation}
 So, $\|v\|_{L^2}$ and $\|v_t\|_{L^2}$ is trivially bounded if $f \in Y_T$ and $u \in Z_T$.
(Because $u \in Z_T$, we have $e^{-2u}$ is pointwise uniformly bounded by $e^{C\|u\|_Z}$.)
 Applying Cauchy-Schwarz, we obtain the pointwise bound
\begin{align*}
|\nabla^3 v|^2 &=  \left| b \int_{0}^{T} \nabla^2 \left( \langle \nabla df, df \rangle e^{-2u} -2 |df|^2 e^{-2u} \nabla u  \right)\right|^2\\
&\le {c} T \int_{0}^{T} \Big( |\nabla^4 f|^2 |df|^2 + |\nabla^3 f|^2 |\nabla df|^2  + |\nabla^3 f|^2 |df|^2 |\nabla u|^2 + |\nabla^2 f|^4 |\nabla u|^2 \\
&\qquad  + |\nabla^2 f|^2 |df|^2 (|\nabla u|^4 + |\nabla^2 u|^2) + |df|^4 (|\nabla u|^6 + |\nabla^2 u|^2 |\nabla u|^2 + |\nabla^3 u|^2) \Big)
\end{align*}
so
\begin{align*}
\|\nabla^3 v\|_{L^2}^2 &\le {c} T^2 \|df\|_{C^0}^2 \|\nabla^4 f\|_{L^2}^2 + {c} T^2 \|\nabla^3 f\|_{L^4}^2 \|\nabla^2 f\|_{L^4}^2 \\
&\quad + {c} T^2 \|df\|_{C^0}^2 \|\nabla^3 f\|_{L^4}^2 \|\nabla u\|_{L^4}^2 + {c} T^2 \|\nabla^2 f\|_{L^8}^4 \|\nabla u\|_{L^4}^2\\
&\quad + {c} T^2 \|df\|_{C^0}^2 \|\nabla^2 f\|_{L^4}^2 (\|\nabla u\|_{L^8}^4 + \|\nabla^2 u\|_{L^4}^2)\\
&\quad + {c} T^2 \|df\|_{C^0}^4 (\|\nabla u\|_{L^6}^6 + \|\nabla^2 u\|_{L^4}^2 \|\nabla u\|_{L^4}^2 + \|\nabla^3 u\|_{L^2}^2)
\end{align*}
which is bounded if $f \in Y_T$ and $u \in Z_T$.

Finally, compute $\nabla v_t$ from \eqref{A3'}.
\begin{align*}
|\nabla v_t|^2 &\le {c} |\nabla df| |df| + |df|^2 |\nabla u|\\
\|\nabla v_t\|_{L^2}^2 &\le {c} \|df\|_{C^0}^2 \|\nabla df\|_{L^2}^2 + {c} \|df\|_{C^0}^4 \|\nabla u\|_{L^2}^2
\end{align*}
which is bounded if $f \in Y_T$ and $u \in Z_T$.
\end{proof}

In fact, we can show further.

 \begin{lemma}\label{P2Lemma1}
 Fix  $f_0\in W^{3,2}(\Sigma)$.
  Then there is $T_0 = T_0(C_0,\delta,\delta') >0$ such that for all $T \le T_0$,
$S_2$ restricts to an operator $S_2 :  B \times B' \to  B'$.
\end{lemma}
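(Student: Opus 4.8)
The plan is to prove the quantitative bound $\|v\|_Z \le \delta'$ for $v = S_2(f,u)$ when $f \in B$ and $u \in B'$; the initial condition $v(0) = 0$ is immediate from the explicit formula $v(t) = \int_0^t (b|df|^2 e^{-2u} - a)$. The lemma just proved already establishes $v \in Z_T$, so the only new content here is \emph{smallness}, and the mechanism is that every term in the definition of $\|v\|_Z^2 = \iint |\nabla^3 v|^2 + |v|^2 + |\nabla v_t|^2 + |v_t|^2$ carries a positive power of $T$: either because $v$ itself is a time integral (for the $\|v\|_{L^2}$ and $\|\nabla^3 v\|_{L^2}$ terms), or because we are integrating a bounded integrand over $\Sigma \times [0,T]$ (for the $\|v_t\|_{L^2}$ and $\|\nabla v_t\|_{L^2}$ terms). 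Throughout, the bound $C_2\delta' < 1$ fixed in Section \ref{sec3} gives $\|u\|_{C^0} \le 1$, hence $e^{-2u} \le e^2$ pointwise.

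Concretely, I would estimate the four pieces in turn. For $v_t = b|df|^2 e^{-2u} - a$ one has $|v_t|^2 \le c(\|df\|_{C^0}^4 + a^2)$ pointwise, so $\|v_t\|_{L^2}^2 \le c((\delta+C_0)^4 + a^2)\,|\Sigma|\,T$ by \eqref{A1} and \eqref{Aballbound}. For $v$ itself, Cauchy--Schwarz in time gives $|v(x,t)|^2 \le T\int_0^T |v_t(x,s)|^2\,ds$, hence $\|v\|_{L^2}^2 \le T^2\|v_t\|_{L^2}^2$, which is even smaller. For $\nabla v_t = b\big(2\langle \nabla df, df\rangle e^{-2u} - 2|df|^2 e^{-2u}\nabla u\big)$ I would use $|\nabla v_t| \le c\big(|\nabla df|\,|df| + |df|^2|\nabla u|\big)$ and integrate, using \eqref{A1}, \eqref{A1'}, \eqref{Aballbound} and \eqref{AZ2} together with $\int_0^T \|\nabla u(t)\|_{L^2(\Sigma)}^2\,dt \le c\|u\|_Z^2 T$; again a factor of $T$ appears. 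Finally, for $\nabla^3 v$ I would reuse the pointwise bound from the proof of the preceding lemma, $|\nabla^3 v|^2 \le cT\int_0^T |\nabla^2(\langle \nabla df,df\rangle e^{-2u} - 2|df|^2 e^{-2u}\nabla u)|^2$, integrate over $\Sigma \times [0,T]$ to obtain $\|\nabla^3 v\|_{L^2}^2 \le cT^2 K$, and then estimate $K$ using $\|f\|_Y \le \delta + C_0$, $\|u\|_Z \le \delta'$ and $\|u\|_{C^0} \le 1$, so that $K$ depends only on $C_0, \delta, \delta'$ and the fixed data $a, b, |\Sigma|$.

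Adding the four bounds gives $\|v\|_Z^2 \le \Phi(C_0,\delta,\delta',a,b,|\Sigma|)\,\big(T^{1/2} + T + T^2 + T^3\big)$ for some explicit $\Phi$ (the $T^{1/2}$ arising wherever a Hölder step in the $\nabla^3 v$ estimate produces one). Choosing $T_0$ small enough that the right-hand side is $\le \delta'^2$ for all $T \le T_0$ completes the proof, and by construction $T_0$ depends only on $C_0,\delta,\delta'$ (with $a,b,|\Sigma|$ fixed throughout the paper). The only point requiring care --- which I would flag as the ``main obstacle,'' though it is bookkeeping rather than a genuine difficulty --- is the order of quantifiers: $T_0$ must be uniform over all $f \in B$ and $u \in B'$. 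This is fine because $\delta'$ was already fixed in Section \ref{sec3}, and because every estimate above uses $f$ and $u$ only through the a priori bounds $\|f\|_Y \le \delta + C_0$, $\|u\|_Z \le \delta'$ and $\|u\|_{C^0}\le 1$, never through $f$ or $u$ individually. The term involving the constant $a$, which carries no smallness, is harmless since it too comes multiplied by a power of $T$.
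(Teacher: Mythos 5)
Your proposal is correct and follows essentially the same route as the paper: bound each of the four pieces of $\|v\|_Z^2$ by reusing the pointwise estimates from the preceding lemma (that $v \in Z_T$), observe that every piece carries a positive power of $T$ once $f$ and $u$ are controlled through $\|f\|_Y \le \delta + C_0$, $\|u\|_Z \le \delta'$, $\|u\|_{C^0} \le 1$, and shrink $T$. The only cosmetic difference is your parenthetical about a possible $T^{1/2}$ in the $\nabla^3 v$ piece --- the paper's computation yields a clean $T^2$ prefactor there --- but this has no bearing on the argument.
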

 
 \begin{proof}
From previous calculation, we have
\[
\|\nabla^3 v\|_{L^2}^2 \le {c} T^2 \|f\|_Y^4 (1 +  \|u\|_Z^2 + \|u\|_Z^4 + \|u\|_Z^6).
\]
So, if we choose $T$ small enough, we get $\|\nabla^3 v\|_{L^2}^2 \le \frac{{\delta'}^2}{4}$.
Also, because $|v_t| \le {c}(\|df\|_{C^0} + 1)$ and $|v| \le {c} T (\|df\|_{C^0} + 1)$, we can make $\|v_t\|_{L^2}^2, \|v\|_{L^2}^2 \le \frac{{\delta'}^2}{4}$ if we choose $T$ small.
Finally,
\[
\|\nabla v_t\|_{L^2}^2 \le {c}\|df\|_{C^0}^2 \max_{0 \le t \le T} \|\nabla df(t)\|_{L^2(\Sigma)}^2 \, T + {c} \|df\|_{C^0}^4 \max_{0 \le t \le T} \|\nabla u(t)\|_{L^2(\Sigma)}^2 \, T
\]
so if we choose $T$ small enough, we get $\|\nabla v_t\|_{L^2}^2 \le \frac{{\delta'}^2}{4}$.
This proves the lemma.
 \end{proof}

\begin{lemma}\label{P2Lemma2}
Fix $f_0 \in W^{3,2}(\Sigma)$ and $u \in B'$.
Then there is an $T_0 = T_0(C_0,\delta,\delta')>0$ such that for all $T \le T_0$ and for each $f_1,f_2\in B$,   
 \begin{align}\label{AT'}
\|S_2(f_1,u)-S_2(f_2,u)\|_Z \le T^{1/4} \|f_1-f_2\|_Y.
 \end{align}
\end{lemma}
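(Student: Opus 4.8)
The plan is to use the explicit form of $S_2$. Since $\partial_t v_i = b|df_i|^2 e^{-2u} - a$ with $v_i(0)=0$, we have $v_i(t) = \int_0^t (b|df_i|^2 e^{-2u} - a)\,ds$, so in the difference $v_1 - v_2 = b\int_0^t (|df_1|^2 - |df_2|^2)e^{-2u}\,ds$ the constant $a$ cancels, and $(v_1-v_2)_t = b(|df_1|^2-|df_2|^2)e^{-2u}$. The quadratic difference obeys the pointwise bound $\big||df_1|^2 - |df_2|^2\big| = |\langle df_1+df_2, df_1-df_2\rangle| \le (|df_1|+|df_2|)|df_1-df_2|$, which — together with \eqref{A1} and \eqref{Aballbound} — is what produces a factor $\|f_1-f_2\|_Y$ in each estimate below; and because $u \in B'$ gives $\|u\|_{C^0}\le 1$, the factor $e^{-2u}$ and every spatial derivative of it that occurs is controlled by a power of $\|u\|_Z \le \delta'$ through \eqref{AZ1}--\eqref{AZ3}. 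What remains is to bound the four terms of $\|v_1-v_2\|_Z^2 = \iint |\nabla^3(v_1-v_2)|^2 + |v_1-v_2|^2 + |\nabla(v_1-v_2)_t|^2 + |(v_1-v_2)_t|^2$.

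For the two terms without a time derivative I would use the time integral itself: Cauchy--Schwarz in $s$ gives, pointwise in $(x,t)$, $|\nabla^j(v_1-v_2)(x,t)|^2 \le T\int_0^T |\nabla^j[(|df_1|^2-|df_2|^2)e^{-2u}](x,s)|^2\,ds$ for $j=0,3$, and integrating over $\Sigma\times[0,T]$ converts the two time integrations into a clean prefactor $T^2$, so that $\iint |\nabla^j(v_1-v_2)|^2 \le T^2 \|\nabla^j[(|df_1|^2-|df_2|^2)e^{-2u}]\|_{L^2}^2$. Expanding the $\nabla^3$ by Leibniz and estimating each resulting product of factors by Hölder together with \eqref{A1}, \eqref{A1'}, \eqref{A1''}, \eqref{AZ1}--\eqref{AZ3} and \eqref{Aballbound} bounds this by $cP(\delta+C_0,\delta')\|f_1-f_2\|_Y^2$ for a polynomial $P$, hence $\iint |\nabla^3(v_1-v_2)|^2 + |v_1-v_2|^2 \le cP\,T^2\|f_1-f_2\|_Y^2$.

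For the two terms involving $(v_1-v_2)_t$ there is no time integral to exploit, so a power of $T$ must be extracted from the spatial estimate instead. The point is that in each product coming from the Leibniz expansion of $\nabla[(|df_1|^2-|df_2|^2)e^{-2u}]$ one can place every ``large'' factor ($f_1$, $f_2$, $u$ and their derivatives) \emph{and} the difference factor $f_1-f_2$ into a time-uniform spatial norm — the $C^0$ bounds on $df$ from \eqref{A1}, the bounds $\max_t\|\nabla^2(\cdot)(t)\|_{L^2} \le c\|\cdot\|_Y$ from \eqref{A1'}, and $\max_t\|\nabla u(t)\|_{L^2} \le c\|u\|_Z$ from \eqref{AZ2} — after which the leftover integral is simply $\int_0^T dt = T$ (for the $|(v_1-v_2)_t|^2$ term one uses $\iint 1 = |\Sigma|T$). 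This gives $\iint |(v_1-v_2)_t|^2 + |\nabla(v_1-v_2)_t|^2 \le cP(\delta+C_0,\delta')\,T\|f_1-f_2\|_Y^2$.

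Adding the four contributions and using $T\le 1$ yields $\|v_1-v_2\|_Z^2 \le cP(\delta+C_0,\delta')(T+T^2)\|f_1-f_2\|_Y^2 \le cP\,T\|f_1-f_2\|_Y^2$, so $\|v_1-v_2\|_Z \le (cP\,T)^{1/2}\|f_1-f_2\|_Y \le T^{1/4}\|f_1-f_2\|_Y$ provided $T$ is small enough that $(cP)^{1/2}T^{1/4}\le 1$; since $cP$ depends only on $C_0,\delta,\delta'$, so does the resulting $T_0$. The main obstacle, and the only place where a careless estimate breaks down, is the handling of the $(v_1-v_2)_t$ terms: there the difference factor $f_1-f_2$ must be routed through one of the time-uniform embeddings in \eqref{A1'} rather than the space-time $L^2$ norm, since otherwise no power of $T$ is produced and the contraction fails. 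Everything else is a routine, if lengthy, term-by-term Hölder/Young computation of the same kind as in Lemmas \ref{P0Lemma} and \ref{P1Lemma2}.
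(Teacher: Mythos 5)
Your argument is correct and follows essentially the same route as the paper's proof: same difference-of-squares decomposition $|df_1|^2-|df_2|^2 = \langle df_1+df_2,\,df_1-df_2\rangle$, same use of Cauchy--Schwarz in time to harvest a $T^2$ prefactor for $\|v_1-v_2\|_{L^2}$ and $\|\nabla^3(v_1-v_2)\|_{L^2}$, and the same key maneuver of routing the $(v_1-v_2)_t$ terms through the time-uniform spatial norms of \eqref{A1}, \eqref{A1'}, \eqref{AZ2} to extract a factor $T$ (the paper bounds the difference factor by $\max_t\|df_1(t)-df_2(t)\|_{L^2(\Sigma)}$ rather than $\|df_1-df_2\|_{C^0}$, but that is a cosmetic variant). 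You omit the explicit Leibniz expansion for $\nabla^3$, but the structure matches Lemma \ref{P2Lemma2}'s proof in the paper.
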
  
  
 \begin{proof}
 Set $v_i = S_2(f_i,u)$.
 Then from \eqref{A3'}, subtracting them gives
\begin{align*}
(v_1-v_2)_t &= b(|df_1|^2 - |df_2|^2) e^{-2u} = b e^{-2u} \langle df_1+df_2, df_1-df_2 \rangle\\
v_1-v_2 &= b \int_{0}^{t} e^{-2u} \langle df_1+df_2, df_1-df_2 \rangle.
\end{align*}
 So,
 \begin{align*}
 \|v_1-v_2\|_{L^2}^2 &\le {c} T^2 (\|df_1\|_{C^0}^2 + \|df_2\|_{C^0}^2) \|df_1-df_2\|_{L^2}^2\\
 &\le \frac{\sqrt{T}}{4} \|f_1-f_2\|_Y^2
 \end{align*}
 if we choose $T$ small enough.
 Also,
 \begin{align*}
 \|(v_1-v_2)_t\|_{L^2}^2 &\le {c} (\|f_1\|_Y^2 + \|f_2\|_Y^2) \max_{0 \le t \le T} \|df_1(t)-df_2(t)\|_{L^2(\Sigma)}^2 \, T\\
 &\le  \frac{\sqrt{T}}{4} \|f_1-f_2\|_Y^2
 \end{align*}
 if we choose $T$ small enough.
 
 Next, compute $\nabla^3 (v_1-v_2)$.
 \begin{align*}
\nabla (v_1-v_2) &=  b \int_{0}^{t} e^{-2u} \Big(  \langle \nabla (df_1+df_2), df_1-df_2 \rangle + \langle df_1+df_2, \nabla (df_1-df_2) \rangle \\
&\qquad \qquad  - \langle df_1+df_2, df_1-df_2 \rangle 2\nabla u \Big).
\end{align*}
So,
\begin{align*}
|\nabla^3 (v_1-v_2)| &\le {c} \int_{0}^{T} e^{-2u} \Big( |\nabla^3 (df_1+df_2)| |df_1-df_2| + |\nabla^2 (df_1+df_2)| |\nabla (df_1-df_2)| \\
&\qquad + |\nabla (df_1+df_2)| |\nabla^2 (df_1-df_2)| + |df_1+df_2| |\nabla^3 (df_1-df_2)| \\
&\quad + |\nabla^2 (df_1+df_2)| |df_1-df_2 | |\nabla u| + |\nabla (df_1+df_2)| |\nabla (df_1-df_2)| |\nabla u|\\
&\qquad + |df_1+df_2| |\nabla^2 (df_1-df_2)| |\nabla u|\\
&\quad + |\nabla (df_1+df_2)| |df_1-df_2| (|\nabla u|^2 + |\nabla^2 u|)\\
&\qquad + |df_1+df_2| |\nabla (df_1-df_2)| (|\nabla u|^2 + |\nabla^2 u|)\\
&\quad + |df_1+df_2| |df_1-df_2| (|\nabla u|^3 + |\nabla^2 u| |\nabla u| + |\nabla^3 u|) \Big).
 \end{align*}
 Integrating over $\Sigma \times [0,T]$ gives
 \begin{align*}
 \|\nabla^3 &(v_1-v_2)\|_{L^2}^2\\
 &\le {c} T^2 \|df_1-df_2\|_{C^0}^2 \|\nabla^4 (f_1+ f_2)\|_{L^2}^2  + {c} T^2 \|\nabla^3 ( f_1 + f_2)\|_{L^4}^2 \|\nabla^2 (f_1-f_2)\|_{L^4}^2\\
 &\quad + {c} T^2 \|\nabla^2 (f_1+f_2)\|_{L^4}^2 \|\nabla^3 (f_1-f_2)\|_{L^4}^2 + {c} T^2 \|df_1+df_2\|_{C^0}^2 \|\nabla^4 (f_1-f_2)\|_{L^2}^2\\
 &+ {c} T^2 \|df_1-df_2\|_{C^0}^2 \|\nabla^3 (f_1+f_2)\|_{L^4}^2 \|\nabla u\|_{L^4}^2\\
 &\quad + {c} T^2 \|\nabla^2 (f_1-f_2) \|_{L^4}^2 \|\nabla^2 (f_1+f_2)\|_{L^8}^4 \|\nabla u\|_{L^8}^4\\
 &\quad + {c} T^2 \|df_1+df_2\|_{C^0}^2 \|\nabla^3 (f_1-f_2)\|_{L^4}^2 \|\nabla u\|_{L^4}^2\\
 &+ {c} T^2 \|df_1-df_2\|_{C^0}^2 \|\nabla^2 (f_1+f_2)\|_{L^4}^2 (\|\nabla u\|_{L^8}^4 + \|\nabla^2 u\|_{L^4}^2)\\
 &\quad + {c} T^2 \|df_1+df_2\|_{C^0}^2 \|\nabla^2 (f_1-f_2)\|_{L^4}^2 (\|\nabla u\|_{L^8}^4 + \|\nabla^2 u\|_{L^4}^2)\\
 &+ {c} T^2 \|df_1-df_2\|_{C^0}^2 \|df_1+df_2\|_{C^0}^2 ( \|\nabla u\|_{L^6}^6 + \| \nabla^2 u \|_{L^4}^2 \|\nabla u\|_{L^4}^2 + \|\nabla^3 u\|_{L^2}^2)\\
 &\le \frac{\sqrt{T}}{4} \|f_1-f_2\|_Y^2
 \end{align*}
 if we choose $T$ small enough.
 
 Finally consider $\nabla (v_1-v_2)_t$.
 \begin{align*}
 \nabla (v_1-v_2)_t &= b e^{-2u} \left(  \langle \nabla (df_1+df_2), df_1-df_2 \rangle + \langle df_1+df_2, \nabla (df_1-df_2) \rangle \right.\\
&\qquad \qquad \left. - \langle df_1+df_2, df_1-df_2 \rangle 2\nabla u \right).
 \end{align*}
 So,
 \begin{align*}
 \|\nabla (v_1-v_2)_t\|_{L^2}^2 &\le {c} \|df_1-df_2\|_{C^0}^2 \max_{0 \le t \le T} \|\nabla^2 (f_1(t)+f_2(t))\|_{L^2(\Sigma)}^2 \, T \\
 &\quad + {c} \|df_1+df_2\|_{C^0}^2 \max_{0 \le t \le T} \|\nabla^2 (f_1(t)-f_2(t))\|_{L^2(\Sigma)}^2 \, T \\
 &\quad + {c} \|df_1+df_2\|_{C^0}^2 \|df_1-df_2\|_{C^0}^2 \max_{0 \le t \le T} \|\nabla u(t) \|_{L^2(\Sigma)}^2 \, T\\
 &\le \frac{\sqrt{T}}{4} \|f_1-f_2\|_Y^2
 \end{align*}
 if we choose $T$ small enough.
 
 In summary, we get
 \[
 \|v_1-v_2\|_Z^2 \le \sqrt{T} \|f_1-f_2\|_Y^2
 \]
 which proves the lemma.

 \end{proof}

 \begin{lemma}\label{P2Lemma3}
 Fix $f_0 \in W^{3,2}(\Sigma)$ and $f \in B$.
Then there is an $T_0 = T_0(C_0,\delta,\delta')>0$ such that for all $T \le T_0$ and for each $u_1,u_2\in B'$,   
 \begin{align}\label{AT2'}
\|S_2(f,u_1)-S_2(f,u_2)\|_Z \le \frac{1}{3} \|u_1-u_2\|_{Z}.
 \end{align}
\end{lemma}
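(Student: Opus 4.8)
The plan is to run the same scheme as in the proof of \Cref{P2Lemma2}, but now with $f \in B$ fixed and the conformal factors varying. Write $v_i = S_2(f,u_i)$ and $w = u_1-u_2$, which lies in $Z_T$ with $w(0)=0$ and $\|w\|_Z \le 2\delta'$. Subtracting the two instances of \eqref{A3'} gives $(v_1-v_2)_t = b|df|^2(e^{-2u_1}-e^{-2u_2})$ and hence $v_1-v_2 = b\int_0^t |df|^2(e^{-2u_1}-e^{-2u_2})$, so $v_1-v_2$ already satisfies the initial condition $(v_1-v_2)(0)=0$ and it suffices to bound each of $\|v_1-v_2\|_{L^2}^2$, $\|\nabla^3(v_1-v_2)\|_{L^2}^2$, $\|(v_1-v_2)_t\|_{L^2}^2$ and $\|\nabla(v_1-v_2)_t\|_{L^2}^2$ by $\tfrac{1}{36}\|w\|_Z^2$; summing then gives $\|v_1-v_2\|_Z \le \tfrac13\|w\|_Z$.

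The first point is a pointwise chain-rule bound: since $\|u_i\|_{C^0}\le 1$ on $B'$, each of $e^{-2u_1}-e^{-2u_2}$ and its spatial derivatives up to order three is a finite sum of terms, each carrying exactly one factor among $w,\nabla w,\nabla^2 w,\nabla^3 w$, the remaining factors being bounded exponentials $e^{-2u_i}$ and derivatives of $u_1$ or $u_2$ of order at most three. For instance $\nabla(e^{-2u_1}-e^{-2u_2}) = -2(e^{-2u_1}-e^{-2u_2})\nabla u_1 - 2e^{-2u_2}\nabla w$, and differentiating this once, resp.\ twice, produces the analogous identities at second, resp.\ third, order. Consequently, after expanding $\nabla^k\big(|df|^2(e^{-2u_1}-e^{-2u_2})\big)$ by the Leibniz rule for $k\le 3$, every resulting term is a product of derivatives of $f$ of order at most four, a bounded coefficient, and one factor among $w,\nabla w,\nabla^2 w,\nabla^3 w$.

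The second point is that every one of these terms carries a positive power of $T$. For $v_1-v_2$ and its spatial derivatives this is automatic from the time integral: Cauchy--Schwarz in $t$ gives $\|\nabla^k(v_1-v_2)\|_{L^2}^2 \le cT^2\big\|\nabla^k(|df|^2(e^{-2u_1}-e^{-2u_2}))\big\|_{L^2}^2$ for $k=0,1,2,3$. For $(v_1-v_2)_t$ and $\nabla(v_1-v_2)_t$, which have no time integral, the power of $T$ is instead harvested from a factor whose $L^2(\Sigma\times[0,T])$-norm is small: one uses $\|\nabla df\|_{L^2}^2, \|\nabla u_i\|_{L^2}^2 \le cT\max_{t}\|\cdot(t)\|_{L^2(\Sigma)}^2$ from \eqref{A1'} and \eqref{AZ2}, the bound $\|\nabla w\|_{L^2}^2 \le cT\max_t\|\nabla w(t)\|_{L^2(\Sigma)}^2 \le cT\|w\|_Z^2$ (again \eqref{AZ2} applied to $w\in Z_T$), and the $L^4$/$L^8$ bounds \eqref{A1}, \eqref{A1''}, \eqref{AZ1}. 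Combining the two points, each of the four squared norms is dominated, exactly as in \Cref{P0Lemma} and \Cref{P2Lemma2} after Hölder and Young, by $cT^{\alpha}P(\|f\|_Y,\delta')\|w\|_Z^2$ with $\alpha>0$ and $P$ a polynomial, while $\|f\|_Y \le \delta+C_0$ by \eqref{Aballbound}; choosing $T_0$ so small that this coefficient is $\le\tfrac{1}{36}$ for each of the four terms finishes the proof.

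The only real difficulty is bookkeeping in the term $\nabla(v_1-v_2)_t = b\,\nabla\big(|df|^2(e^{-2u_1}-e^{-2u_2})\big)$: it is the one quantity with no time integral, and its piece $b\,|df|^2\nabla(e^{-2u_1}-e^{-2u_2})$ contains $|df|^2\nabla w$ with $\|df\|_{C^0}\le c\|f\|_Y$ \emph{not} small, so all the smallness must be supplied by $\|\nabla w\|_{L^2}\le cT^{1/2}\|w\|_Z$; one must check term by term that such a $T$-carrying factor is always available.
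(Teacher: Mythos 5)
Your proposal is correct and follows essentially the same route as the paper's own proof: subtract the two equations defining $v_i$, expand $\nabla^k\big(|df|^2(e^{-2u_1}-e^{-2u_2})\big)$ so that each term carries exactly one factor of $w=u_1-u_2$ or one of its spatial derivatives, then estimate the four components of the $Z$-norm separately, extracting a positive power of $T$ either from the time integral (via Cauchy--Schwarz for $v_1-v_2$ and $\nabla^3(v_1-v_2)$) or from $\|\cdot\|_{L^2(\Sigma\times[0,T])}\le T^{1/2}\max_t\|\cdot(t)\|_{L^2(\Sigma)}$ (for $(v_1-v_2)_t$ and $\nabla(v_1-v_2)_t$). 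Your closing remark about the term $b\,|df|^2\nabla(e^{-2u_1}-e^{-2u_2})$ in $\nabla(v_1-v_2)_t$ is a good catch: the correct expansion produces a piece $-2e^{-2u_2}|df|^2\nabla w$ containing $\nabla w$ but no additional factor of $w$, so the smallness must indeed be supplied by $\|\nabla w\|_{L^2(\Sigma\times[0,T])}\le cT^{1/2}\|w\|_Z$, a point the paper's displayed pointwise bound glosses over (it records only $|df|^2|w||\nabla w|$) though the subsequent $L^2$ estimate reaches the same conclusion.
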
  
 
 \begin{proof}
 Set $v_i=S_2(f,u_i)$.
 Subtracting them gives
 \begin{align*}
 (v_1-v_2)_t &= b |df|^2 (e^{-2u_1}-e^{-2u_2})\\
 v_1-v_2 &= b \int_{0}^{t} |df|^2 (e^{-2u_1}-e^{-2u_2}).
 \end{align*}
 Using $|e^{-2u_1}-e^{-2u_2}| \le {c}|u_1-u_2|$, we have
 \begin{align*}
 \|v_1-v_2\|_{L^2}^2 &\le {c} T^2 \|df\|_{C^0}^4 \|u_1-u_2\|_{L^2}^2\\
 \|(v_1-v_2)_t\|_{L^2}^2 &\le {c} \|df\|_{C^0}^4 \max_{0 \le t \le T} \|u_1(t)-u_2(t)\|_{L^2(\Sigma)}^2 \, T
 \end{align*}
 so if we choose $T$ small enough, we have that $\|v_1-v_2\|_{L^2}^2, \|(v_1-v_2)_t\|_{L^2}^2 \le \frac{1}{36} \|u_1-u_2\|_Z^2$.
 
 Next, compute $\nabla^3 (v_1-v_2)$.
 \begin{align*}
 \nabla (v_1-v_2) &= b \int_{0}^{t} \Big( 2\langle \nabla df, df \rangle (e^{-2u_1}-e^{-2u_2}) - |df|^2 (e^{-2u_1}-e^{-2u_2}) 2 (\nabla u_1 - \nabla u_2) \Big).
 \end{align*}
 So,
 \begin{align*}
 |\nabla^3&  (v_1-v_2)| \le {c} \int_{0}^{T} \Big( |\nabla^4 f| |df| |u_1-u_2| + |\nabla^3 f| |\nabla^2 f| |u_1-u_2|\\
 &\quad + (|\nabla^3 f| |df| + |\nabla^2 f|^2) |u_1-u_2| |\nabla (u_1-u_2)|\\
 &\quad + |\nabla^2 f| |df| |u_1-u_2| (|\nabla (u_1-u_2)|^2 + |\nabla^2 (u_1-u_2)|)\\
 &\quad + |df|^2 |u_1-u_2| (|\nabla (u_1-u_2)|^3 + |\nabla^2 (u_1-u_2)| |\nabla (u_1-u_2)| + |\nabla^3 (u_1-u_2)|) \Big).
 \end{align*}
Now we integrate over $\Sigma \times [0,T]$.
 \begin{align*}
 \|\nabla^3& (v_1-v_2)\|_{L^2}^2\\
 &\le {c} T^2 \|df\|_{C^0}^2 \|u_1-u_2\|_{C^0}^2 \|\nabla^4 f\|_{L^2}^2 + {c} T^2 \|u_1-u_2\|_{C^0}^2 \|\nabla^3 f\|_{L^4}^2 \|\nabla^2 f\|_{L^4}^2\\
 &\quad +  {c} T^2 (\|df\|_{C^0}^2 \|\nabla^3 f\|_{L^4}^2 + \|\nabla^2 f\|_{L^8}^4) \|u_1-u_2\|_{C^0}^2 \|\nabla (u_1-u_2)\|_{L^4}^2\\
 &\quad + {c} T^2 \|df\|_{C^0}^2 \|u_1-u_2\|_{C^0}^2 \|\nabla^2 f\|_{L^4}^2 (\|\nabla (u_1-u_2)\|_{L^8}^4 + \|\nabla^2 (u_1-u_2)\|_{L^4}^2)\\
 &\quad + {c} T^2 \|df\|_{C^0}^4 \|u_1-u_2\|_{C^0}^2 \|\nabla (u_1-u_2)\|_{L^6}^6\\
 &\quad + {c} T^2 \|df\|_{C^0}^4 \|u_1-u_2\|_{C^0}^2  \|\nabla^2 (u_1-u_2)\|_{L^4}^2 \|\nabla (u_1-u_2)\|_{L^4}^2\\
 &\quad + {c} T^2 \|df\|_{C^0}^4 \|u_1-u_2\|_{C^0}^2 \|\nabla^3 (u_1-u_2)\|_{L^2}^2\\
 &\le \frac{1}{36} \|u_1-u_2\|_Z^2
 \end{align*}
 if we choose $T$ small enough.
 
 Finally,
 \begin{align*}
 |\nabla (v_1-v_2)_t| &\le {c} |\nabla df| |df| |u_1-u_2| + |df|^2 |u_1-u_2|  |\nabla (u_1 - u_2)|
 \end{align*}
 so
 \begin{align*}
 \|\nabla (v_1-v_2)_t\|_{L^2}^2 &\le {c} \|df\|_{C^0}^2 \|u_1-u_2\|_{C^0}^2 \max_{0 \le t \le T} \|\nabla^2 f(t) \|_{L^2(\Sigma)}^2 \, T\\
 &\quad + {c} \|df\|_{C^0}^4 \|u_1-u_2\|_{C^0}^2 \max_{0 \le t \le T} \|\nabla (u_1(t)-u_2(t))\|_{L^2(\Sigma)}^2 \,  T\\
 &\le \frac{1}{36} \|u_1-u_2\|_Z^2
 \end{align*}
 if we choose $T$ small enough.
 
 In summary, we get
 \[
 \|v_1-v_2\|_Z^2 \le \frac{1}{9} \|u_1-u_2\|_Z^2
 \]
 which proves the lemma.

 \end{proof}


 \section{Existence of fixed point}
 \label{sec5}
 
 Because $Y_T$ and $Z_T$ are Hilbert space, $Y_T \times Z_T$ is also a Hilbert space and we can equip the norm
 \begin{equation}
 \|(f,u)\|_{Y \times Z} = (C_3)^{-1} \|f\|_Y + \|u\|_Z.
 \end{equation}
 
 Define an operator $\mathcal{S} : Y_T \times Z_T \rightarrow Y_T \times Z_T$ by
 \begin{equation} \label{the map S}
 \mathcal{S} (f,u) = (S_1(f,u),S_2(f,u)).
 \end{equation}
 
 \begin{prop}\label{Fixed pt}
 Fix $f_0 \in W^{3,2}(\Sigma)$.
 Then there is an $T_0 = T_0(C_0,\delta,\delta')>0$ such that for all $T \le T_0$,
  \begin{enumerate}
\item[(a)]  $\mathcal{S}$ restricts to an operator $\mathcal{S}: B \times B' \to B \times B'$.
\item [(b)] For each $f_1,f_2 \in B$ and $u_1,u_2 \in B'$,   
 \begin{align}\label{Contraction}
\|\mathcal{S}(f_1,u_1)-\mathcal{S}(f_2,u_2)\|_{Y \times Z} \le \frac{5}{6} \|(f_1,u_1)-(f_2-u_2)\|_{Y \times Z}.
 \end{align}
\end{enumerate}
 \end{prop}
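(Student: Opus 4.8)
The plan is to read off both statements from the lemmas of Section~\ref{sec4}. Part (a) is immediate: since $\mathcal{S}(f,u)=(S_1(f,u),S_2(f,u))$, Lemma~\ref{P1Lemma1} gives $S_1(f,u)\in B$ and Lemma~\ref{P2Lemma1} gives $S_2(f,u)\in B'$ whenever $T$ is below the two thresholds in those lemmas. We take $T_0$ to be the minimum of all the finitely many thresholds $T_0(C_0,\delta,\delta')$ appearing in Lemmas~\ref{P1Lemma1}--\ref{P2Lemma3}; this will be shrunk once more below for part (b).

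For part (b) the idea is to vary the two arguments one at a time through the intermediate point $(f_2,u_1)$. By the triangle inequality,
\begin{align*}
\|S_1(f_1,u_1)-S_1(f_2,u_2)\|_Y &\le \|S_1(f_1,u_1)-S_1(f_2,u_1)\|_Y+\|S_1(f_2,u_1)-S_1(f_2,u_2)\|_Y,\\
\|S_2(f_1,u_1)-S_2(f_2,u_2)\|_Z &\le \|S_2(f_1,u_1)-S_2(f_2,u_1)\|_Z+\|S_2(f_2,u_1)-S_2(f_2,u_2)\|_Z.
\end{align*}
Applying Lemma~\ref{P1Lemma2} (with $u=u_1$ fixed) and Lemma~\ref{P1Lemma3} (with $f=f_2$ fixed) to the first line, and Lemma~\ref{P2Lemma2} (with $u=u_1$ fixed) and Lemma~\ref{P2Lemma3} (with $f=f_2$ fixed) to the second, gives
\begin{align*}
\|S_1(f_1,u_1)-S_1(f_2,u_2)\|_Y &\le \tfrac13\|f_1-f_2\|_Y+\tfrac{C_3}{2}\|u_1-u_2\|_Z,\\
\|S_2(f_1,u_1)-S_2(f_2,u_2)\|_Z &\le T^{1/4}\|f_1-f_2\|_Y+\tfrac13\|u_1-u_2\|_Z.
\end{align*}

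Now assemble in the twisted norm $\|(f,u)\|_{Y\times Z}=C_3^{-1}\|f\|_Y+\|u\|_Z$: multiplying the first inequality by $C_3^{-1}$ and adding it to the second,
\begin{align*}
\|\mathcal{S}(f_1,u_1)-\mathcal{S}(f_2,u_2)\|_{Y\times Z} &\le \Bigl(\tfrac{1}{3C_3}+T^{1/4}\Bigr)\|f_1-f_2\|_Y+\Bigl(\tfrac12+\tfrac13\Bigr)\|u_1-u_2\|_Z.
\end{align*}
The coefficient of $\|u_1-u_2\|_Z$ is exactly $\tfrac56$; the weight $C_3^{-1}$ in the product norm is precisely what converts the large Lipschitz constant $\tfrac{C_3}{2}$ of $S_1$ in the $u$--variable into the harmless $\tfrac12$. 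Shrinking $T_0$ further so that in addition $T^{1/4}\le\tfrac{1}{2C_3}$ (that is, $T_0\le(2C_3)^{-4}$) makes the coefficient of $\|f_1-f_2\|_Y$ at most $\tfrac{5}{6C_3}$, so the right-hand side is bounded by $\tfrac56\bigl(C_3^{-1}\|f_1-f_2\|_Y+\|u_1-u_2\|_Z\bigr)=\tfrac56\|(f_1,u_1)-(f_2,u_2)\|_{Y\times Z}$, which is \eqref{Contraction}.

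There is no new analytic estimate at this stage: all the work — parabolic regularity \eqref{A2b}, the pointwise product bounds, and Lemma~\ref{P0Lemma} — is already contained in the four twisted-contraction lemmas. The only structural point, built into the definition $C_3=1600\,C_0C_1C_2$ and the choice $\delta'\le\delta/C_3$, is that $S_1$ strictly contracts in $f$ but is merely Lipschitz-with-a-possibly-huge-constant in $u$; the asymmetric product norm absorbs this. Hence the sole thing to watch is the bookkeeping of constants and the mutual compatibility of the smallness conditions on $T_0$, which is automatic since finitely many positive thresholds have a positive minimum.
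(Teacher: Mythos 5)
Your proof is correct and follows essentially the same route as the paper: part (a) cites Lemmas~\ref{P1Lemma1} and \ref{P2Lemma1}, and part (b) splits through the intermediate point $(f_2,u_1)$, applies Lemmas~\ref{P1Lemma2}--\ref{P2Lemma3}, assembles in the weighted norm $\|(f,u)\|_{Y\times Z}=C_3^{-1}\|f\|_Y+\|u\|_Z$, and imposes $T^{1/4}\le\tfrac{1}{2}C_3^{-1}$ at the end. The one additional observation you make explicitly, that the weight $C_3^{-1}$ is precisely what tames the large Lipschitz constant $C_3/2$ in the $u$-variable of $S_1$, is a helpful gloss but not a different argument.
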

 
 \begin{proof}
 By \Cref{P1Lemma1} and \Cref{P2Lemma1}, (a) is proved.
 For (b), using Lemmas \ref{P1Lemma2}, \ref{P1Lemma3}, \ref{P2Lemma2}, \ref{P2Lemma3}, there is $T_0 = T_0(\delta,\delta')>0$ such that for all $T \le T_0$,
 \begin{align*}
 \|\mathcal{S}(f_1,u_1)&-\mathcal{S}(f_2,u_2)\|_{Y \times Z}\\
 & = (C_3)^{-1}\|S_1(f_1,u_1)- S_1(f_2,u_2) \|_Y  + \|S_2(f_1,u_1) - S_2(f_2,u_2)\|_{Z}\\
 &\le (C_3)^{-1} \|S_1(f_1,u_1)-S_1(f_2,u_1)\|_Y + (C_3)^{-1} \|S_1(f_2,u_1)-S_1(f_2,u_2)\|_Y\\
 &\quad + \|S_2(f_1,u_1) - S_2(f_2,u_1)\|_Z + \|S_2(f_2,u_1)-S_2(f_2,u_2)\|_Z\\
 &\le \frac{1}{3} (C_3)^{-1} \|f_1-f_2\|_Y + \frac{1}{2} \|u_1-u_2\|_Z\\
 &\quad + T^{1/4} \|f_1-f_2\|_Y + \frac{1}{3} \|u_1-u_2\|_Z\\
 &\le \frac{5}{6} \left( (C_3)^{-1} \|f_1-f_2\|_Y + \|u_1-u_2\|_Z \right)\\
 &= \frac{5}{6}\|(f_1,u_1)-(f_2,u_2)\|_{Y \times Z}
 \end{align*}
 if $T^{1/4} \le \frac{1}{2} (C_3)^{-1}$.
 \end{proof}
 
\begin{theorem}\label{short time}
(Short time existence for strong solution)
There is $T_0>0$ such that there exists a smooth solution $(f,u) \in B \times B'$ of \eqref{main0} on $\Sigma \times [0,T_0]$.
\end{theorem}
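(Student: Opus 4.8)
The plan is to obtain $(f,u)$ as the unique fixed point of the contraction $\mathcal{S}$ of \Cref{Fixed pt}, check that a fixed point is precisely a solution of \eqref{main0}, verify that $f$ in fact takes values in $N$, and finally bootstrap to smoothness. For the first step: fix $\delta,\delta'$ as in Section \ref{sec3} and let $T_0=T_0(C_0,\delta,\delta')>0$ be the time furnished by \Cref{Fixed pt}. For $T\le T_0$ the sets $B=B_\delta\subset Y_T$ and $B'=B'_{\delta'}\subset Z_T$ are closed, so $B\times B'$ with the metric induced by $\|\cdot\|_{Y\times Z}$ (a norm equivalent to the product Hilbert norm) is a complete metric space. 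By \Cref{Fixed pt}(a), $\mathcal{S}$ maps $B\times B'$ into itself; by (b) it is a $\tfrac{5}{6}$-contraction. Banach's fixed point theorem then produces a unique $(f,u)\in B\times B'$ with $S_1(f,u)=f$ and $S_2(f,u)=u$.

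Next I would unwind these identities. By the definition \eqref{A3} of $S_1$, $S_1(f,u)=f$ means $(\partial_t-e^{-2u}\Delta)f=e^{-2u}A_f(df,df)$ with $f(0)=f_0$; since the tension field of a map into $N\hookrightarrow\R^L$ satisfies $\tau(f)=\Delta f+A_f(df,df)$, with $A$ the second fundamental form of $N$ (extended to a tubular neighborhood as in the definition of $S_1$), this is exactly \eqref{main0-1}. Likewise, by the definition \eqref{A3'} of $S_2$, $S_2(f,u)=u$ means $u_t=b|df|^2e^{-2u}-a$ with $u(0)=0$, which is \eqref{main0-2}. Thus $(f,u)\in B\times B'$ solves \eqref{main0}, provided we know $f$ maps into $N$ — which is needed even for $\tau(f)$ to be meaningful as written, and which is not automatic since $S_1$ was set up for maps into $\R^L$.

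To obtain $f(x,t)\in N$ I would run the classical maximum principle argument. By \eqref{A1}, $f\in Y_T\hookrightarrow C^0([0,T],C^1(\Sigma,\R^L))$, so $f$ stays in a fixed tubular neighborhood of $N$ on which $\rho(y):=\operatorname{dist}(y,N)^2$ is smooth; put $\phi:=\rho\circ f$, which vanishes at $t=0$ since $f_0$ takes values in $N$. Differentiating $\phi$ along the flow $f_t=e^{-2u}(\Delta f+A_f(df,df))$, and using the standard geometric extension of $A$ off $N$ so that $A_f(df,df)$ is normal to $N$ to leading order, one finds a differential inequality $(\partial_t-e^{-2u}\Delta)\phi\le c\,\phi$ on $\Sigma\times[0,T]$, with $c$ depending only on the geometry of $N$, on $\|f\|_Y$ and on $\|u\|_{C^0}\le1$; since $\phi\ge0$ and $\phi(\cdot,0)=0$, the parabolic maximum principle forces $\phi\equiv0$, i.e.\ $f$ maps into $N$. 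This is the one genuinely non-formal step — it is precisely where the special form of the inhomogeneity $e^{-2u}A_f(df,df)$ in \eqref{A3} is used — and I expect it, rather than the fixed point or the bootstrap, to be the main obstacle; it is classical (cf.\ Eells--Sampson \cite{ES64}, Struwe \cite{S85}).

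It remains to upgrade regularity. Solving \eqref{main0-2} explicitly as in \eqref{eq:2.1} writes $u$ in terms of $\int_0^t e^{2as}|df|^2(s)\,ds$, so $u$ inherits the spatial regularity of $|df|^2$ and is one derivative smoother in $t$. Since $\|u\|_{C^0}\le1$, the operator $\partial_t-e^{-2u}\Delta$ is uniformly parabolic; on each $\Sigma\times[\ep,T_0]$ with $\ep>0$ one applies $L^p$ parabolic estimates and then parabolic Schauder estimates (see \cite{E10}, \cite{L96}, \cite{MM12}), treating $e^{-2u}$ as the coefficient and $e^{-2u}A_f(df,df)$ as the inhomogeneity, gaining two spatial derivatives for $f$ at each stage; feeding the improved $f$ back through \eqref{eq:2.1} improves $u$, hence the coefficient and the inhomogeneity, and iterating gives $(f,u)\in C^\infty(\Sigma\times(0,T_0])$ — and on all of $\Sigma\times[0,T_0]$ when $f_0$ is smooth. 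The uniqueness in $B\times B'$ comes for free from the contraction.
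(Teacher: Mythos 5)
Your argument is essentially the paper's: obtain $(f,u)$ as the Banach fixed point of $\mathcal{S}$ on $B\times B'$ via Proposition~\ref{Fixed pt}, identify the fixed point equations with \eqref{main0}, establish that $f$ maps into $N$, and then bootstrap to smoothness using uniform parabolicity of $\partial_t - e^{-2u}\Delta$, $L^p$ parabolic theory, and Schauder estimates, feeding the explicit formula \eqref{eq:2.1} for $e^{2u}$ back in at each stage. The one small deviation is the $N$-valuedness step, where the paper simply cites the nearest-point-projection argument (referring to \cite{LW08}) whereas you run the maximum principle on $\operatorname{dist}(\cdot,N)^2\circ f$; both are classical, both rely on the same normality of $A_f(df,df)$, and either is adequate here.
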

  
\begin{proof}
The existence of solution $f,u$ comes from \Cref{Fixed pt}.
The fact $f(\Sigma \times [0,T_0]) \subset N$ can be easily shown using nearest point projection, see for example \cite{LW08}.
Moreover, the operator $\partial_t - e^{-2u}\Delta$ is uniformly parabolic, so $|(\partial_t - e^{-2u}\Delta)f| \in L^p(\Sigma \times [0,T_0])$ for any $1 \le p < \infty$, by standard parabolic theory.
This implies
\begin{equation*}
\nabla^2 f, \partial_t f \in L^p(\Sigma \times [0,T_0])
\end{equation*}
for any $1 \le p < \infty$.

Next, by direct computation from \eqref{main0-2}, we have
\begin{equation*}
e^{2u} = e^{-2at} \left( 1 + 2b \int_{0}^{t} e^{2as}|df|^2 \right)
\end{equation*}
hence
\begin{align*}
\nabla u &= e^{-2u-2at} 2b \int_{0}^{t} e^{2as} \langle \nabla df,df \rangle\\
\int |\nabla u|^p &\le (4b)^p \int \left( \int_{0}^{t} |\nabla df| |df| \right)^p\\
&\le (4b)^p t^{p-1} \int \int_{0}^{t} |\nabla df|^p |df|^p
\end{align*}
which implies $\nabla u \in L^{p}(\Sigma \times [0,T_0])$ for any $1 \le p < \infty$.
Now taking $\nabla$ in the equation \eqref{main0-1} to get
\begin{equation*}
|(\partial_t - e^{-2u} \Delta ) \nabla f| \le C \left( |\nabla u| |\Delta f| + |\nabla u| |df|^2 + |\nabla df| |df| + |df|^3 \right) \in L^p(\Sigma \times [0,T_0])
\end{equation*}
which implies
\begin{equation*}
\nabla^3 f, \partial_t (\nabla f) \in L^p(\Sigma \times [0,T_0])
\end{equation*}
for any $1 \le p < \infty$.

Finally, from Sobolev embedding, we have $f,df \in C^{\alpha}(\Sigma \times [0,T_0])$ for some $\alpha>0$.
This implies $(\partial_t - e^{-2u}\Delta)f \in C^{\alpha,\alpha/2}(\Sigma \times [0,T_0])$ where $C^{\alpha,\alpha/2}$ is parabolic H{\"o}lder space of exponent $\alpha$.
Now by Schauder estimate and standard bootstrapping argument, we conclude that $f$ is smooth, so $u$ is.
\end{proof}

 \section{Local estimate}
 \label{sec6}

To get global weak solution, we will follow Struwe's idea:
Run the flow until singularity occurs.
Then take weak limit as new initial condition, run the flow again.
Keep going this process and we will have only finitely many singularities due to finiteness of the energy.
Because our flow is coupled, we need to re-establish the whole process with $f$ and $u$.
And this requires some condition on $b$, which can be interpreted as the sensitiveness of the conformal evolution of the metric with respect to high energy density.
Let $C_N>0$ be a constant only depending on the embedding $N \hookrightarrow \R^{L}$ such that $\|R^N\|,\|A\|,\|DA\| \le C_N$ where $R^N$ is the Riemannian curvature tensor of $N$.
And from now on, assume $b \ge C_N^2$.

\subsection{Energy estimate}

Now we establish local energy estimate.
Fix $B_{2r}$ and let $\varphi$ be a cut-off function supported on $B_{2r}$ such that $\varphi \equiv 1$ on $B_{r}$, $0 \le \varphi \le 1$ and $|\nabla \varphi| \le \frac{4}{r}$.

\begin{prop} \label{energy est}
For solutions $(f,u)$ of \eqref{main0}, we have
\begin{equation} \label{2 estimate}
\begin{aligned}
\int_{t_1}^{t_2} \int_{B_{2r}} e^{2u}|f_t|^2 \varphi^2 +& \int_{B_{2r}}|df|^2 \varphi^2 (t_2) - \int_{B_{2r}}|df|^2 \varphi^2 (t_1)\\
&\le \frac{4^2}{ar^2} (e^{2at_2}-e^{2at_1}) E_0.
\end{aligned}
\end{equation}
Especially, we have
\begin{equation}\label{energy difference}
E(B_{r},t_2) - E(B_{2r},t_1) \le \frac{4^2}{2ar^2}(e^{2at_2}-e^{2at_1})E_0.
\end{equation}
\end{prop}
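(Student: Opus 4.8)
The plan is to carry out the classical Struwe-type localized energy computation, keeping careful account of the two conformal factors $e^{\pm 2u}$ that equation \eqref{main0-1} introduces. Working in the isometrically embedded picture $N\hookrightarrow\R^L$ (so that $|df|^2 = g_0^{ij}\langle\partial_i f,\partial_j f\rangle$ with the Euclidean inner product) and taking $B_{2r}$ to be a small coordinate ball, I would differentiate the localized energy $\int_{B_{2r}}|df|^2\varphi^2\,dvol_{g_0}$ in $t$. Since $\varphi$ and $g_0$ do not depend on $t$, only $|df|^2$ moves, and integrating by parts in the space variables (the cut-off $\varphi$ removes the boundary term) gives
\[
\frac{d}{dt}\int_{B_{2r}}|df|^2\varphi^2 \;=\; -2\int (\Delta_{g_0}f)\cdot f_t\,\varphi^2 \;-\; 4\int \varphi\,\langle\nabla f, f_t\rangle\cdot\nabla\varphi .
\]
Because $f(\cdot,t)$ takes values in $N$ we have $f_t\in T_fN$, while $\Delta_{g_0}f-\tau(f)$ is normal to $N$; hence $(\Delta_{g_0}f)\cdot f_t=\tau(f)\cdot f_t$, and by \eqref{main0-1}, $\tau(f)=e^{2u}f_t$, so $(\Delta_{g_0}f)\cdot f_t=e^{2u}|f_t|^2$. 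This produces the basic identity
\[
\frac{d}{dt}\int_{B_{2r}}|df|^2\varphi^2 \;=\; -2\int e^{2u}|f_t|^2\varphi^2 \;-\; 4\int \varphi\,\langle\nabla f, f_t\rangle\cdot\nabla\varphi .
\]

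Next I would absorb the cut-off cross term into the good term by Young's inequality, using the weight $e^{2u}$: writing $4\varphi|\nabla f|\,|\nabla\varphi|\,|f_t|\le e^{2u}\varphi^2|f_t|^2+4e^{-2u}|\nabla f|^2|\nabla\varphi|^2$ turns the identity into the differential inequality
\[
\int_{B_{2r}} e^{2u}|f_t|^2\varphi^2 \;+\; \frac{d}{dt}\int_{B_{2r}}|df|^2\varphi^2 \;\le\; 4\int_{B_{2r}} e^{-2u}|\nabla f|^2|\nabla\varphi|^2 .
\]
The essential observation is that the surviving cut-off term carries the factor $e^{-2u}$, which is exactly the quantity controlled by the explicit solution \eqref{eq:2.1} of \eqref{main0-2}: since $e^{2u}=e^{-2at}\big(1+2b\int_0^t e^{2as}|df|^2\big)\ge e^{-2at}$, we get $e^{-2u}\le e^{2at}$ pointwise. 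Combining this with $|\nabla\varphi|\le 4/r$, $\supp\nabla\varphi\subset B_{2r}$, and the energy bound $\int_{B_{2r}}|\nabla f|^2\,dvol_{g_0}\le 2E(t)\le 2E_0$, the right-hand side is bounded by a constant multiple of $r^{-2}e^{2at}E_0$; integrating in $t$ over $[t_1,t_2]$ and using $\int_{t_1}^{t_2}e^{2at}\,dt=\frac{1}{2a}(e^{2at_2}-e^{2at_1})$ yields \eqref{2 estimate}, the numerical constant $4^2$ being what comes out of the bounds on $|\nabla\varphi|$, the Young split, and $E\le E_0$.

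Finally, \eqref{energy difference} follows by discarding the nonnegative term $\int_{t_1}^{t_2}\!\int e^{2u}|f_t|^2\varphi^2$ in \eqref{2 estimate} and using the properties of $\varphi$: since $\varphi\equiv 1$ on $B_r$ we have $\int_{B_{2r}}|df|^2\varphi^2(t_2)\ge 2E(B_r,t_2)$, and since $0\le\varphi\le 1$ with $\supp\varphi\subset B_{2r}$ we have $\int_{B_{2r}}|df|^2\varphi^2(t_1)\le 2E(B_{2r},t_1)$; dividing by $2$ gives the stated inequality. I expect the only genuinely delicate point to be the bookkeeping of the conformal factors — making sure the integration by parts produces the weighted term $e^{2u}|f_t|^2$ (so that the energy-identity structure survives) and that the compensating $e^{-2u}$ in the cut-off term is precisely the quantity bounded by \eqref{eq:2.1}; the rest is the standard localized-energy estimate.
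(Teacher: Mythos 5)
Your proposal is correct and follows essentially the same line as the paper: the paper multiplies \eqref{main0-1} by $e^{2u}f_t\varphi^2$ and integrates by parts (the mirror image of your step of differentiating the localized energy and invoking $\langle f_t, A(df,df)\rangle=0$), applies Young's inequality with the same $e^{\pm 2u}$ weight to the cut-off cross term, uses $e^{-2u}\le e^{2at}$ from \eqref{eq:2.1}, and integrates in $t$. The only discrepancy is cosmetic bookkeeping of the numerical prefactor, which you flagged but did not track precisely.
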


\begin{proof}
From the equation \eqref{main0-1}, multiplying $e^{2u} f_t \varphi^2$ gives
\begin{align*}
\int_{B_{2r}}e^{2u}|f_t|^2 \varphi^2 &= \int_{B_{2r}}\langle f_t, \tau(f) \varphi^2 \rangle\\
& = -\int_{B_{2r}} \langle df_t, df \varphi^2 \rangle - 2 \int_{B_{2r}} \langle f_t, f_i \rangle \varphi \nabla_i \varphi\\
& \le -\frac{1}{2} \frac{d}{dt} \int_{B_{2r}} |df|^2 \varphi^2 + \frac{1}{2}\int_{B_{2r}} e^{2u} |f_t|^2 \varphi^2 +2 \int_{B_{2r}} e^{-2u} |df|^2 |\nabla \varphi|^2.
\end{align*}
So, we have
\begin{align*}
\int_{B_{2r}}e^{2u} |f_t|^2 \varphi^2 + \frac{d}{dt} \int_{B_{2r}} |df|^2 \varphi^2 &\le 4\int_{B_{2r}} e^{-2u}|df|^2 |\nabla \varphi|^2\\
&\le 4\frac{4^2}{r^2}e^{2at} \int_{B_{2r}} |df|^2\\
&\le 4\frac{4^2}{r^2}e^{2at} 2E_0.
\end{align*}
Integrating from $t_1$ to $t_2$ gives the result.
\end{proof}

\begin{lemma} \label{f_t^2 estimates}
Furthermore, assume
\begin{equation*}
\sup_{t_1 \le t \le t_2}E(B_{2r},t) < \varepsilon_1.
\end{equation*}
Then we have
\begin{align}
\int_{t_1}^{t_2}\int_{B_{2r}}e^{2u}|f_t|^2 \varphi^2 &\le 4^2 \varepsilon_1 \left( 1 +\frac{e^{2at_2}-e^{2at_1}}{2ar^2}\right) \label{e^2u f_t^2 estimate}\\
\int_{t_1}^{t_2}\int_{B_{2r}} |f_t|^2 \varphi^2 & \le e^{2at_2} 4^2 \varepsilon_1 \left( 1 + \frac{e^{2at_2}-e^{2at_1}}{2ar^2} \right). \label{f_t^2 estimate}
\end{align}
\end{lemma}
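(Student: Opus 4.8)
The plan is to localize the proof of Proposition~\ref{energy est}: rerun exactly the same computation, but everywhere the global bound $\int_{B_{2r}}|df|^2\le 2E_0$ was used, substitute instead the hypothesis $\int_{B_{2r}}|df|^2(t)=2E(B_{2r},t)<2\varepsilon_1$, which now holds for every $t\in[t_1,t_2]$.

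Concretely, I would start from the pointwise-in-time inequality already established in the proof of Proposition~\ref{energy est}, namely
\[
\int_{B_{2r}}e^{2u}|f_t|^2\varphi^2 + \frac{d}{dt}\int_{B_{2r}}|df|^2\varphi^2 \;\le\; 4\int_{B_{2r}}e^{-2u}|df|^2|\nabla\varphi|^2 \;\le\; \frac{4\cdot 4^2}{r^2}\,e^{2at}\int_{B_{2r}}|df|^2 ,
\]
where the last step uses $e^{-2u}\le e^{2at}$ (immediate from \eqref{eq:2.1}, whose bracket is $\ge 1$) and $|\nabla\varphi|\le 4/r$. Inserting the small-energy hypothesis bounds the right-hand side by $\frac{4\cdot 4^2}{r^2}e^{2at}\,2\varepsilon_1$ on $[t_1,t_2]$.

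Next I would integrate this differential inequality over $[t_1,t_2]$. The boundary term $\int_{B_{2r}}|df|^2\varphi^2(t_2)$ is nonnegative and is discarded; the boundary term at $t_1$ obeys $\int_{B_{2r}}|df|^2\varphi^2(t_1)\le\int_{B_{2r}}|df|^2(t_1)<2\varepsilon_1$ since $0\le\varphi\le1$ and $\supp\varphi\subset B_{2r}$; and the time integral of the right-hand side equals $\frac{4\cdot4^2}{r^2}\,2\varepsilon_1\cdot\frac{e^{2at_2}-e^{2at_1}}{2a}$. Collecting the three contributions (and absorbing the universal numerical factors exactly as in Proposition~\ref{energy est}) gives the first assertion \eqref{e^2u f_t^2 estimate}. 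For the second assertion \eqref{f_t^2 estimate}, I would observe that on $[t_1,t_2]$ one has $|f_t|^2 = e^{-2u}\bigl(e^{2u}|f_t|^2\bigr)\le e^{2at}\,e^{2u}|f_t|^2\le e^{2at_2}\,e^{2u}|f_t|^2$, whence $\int_{t_1}^{t_2}\int_{B_{2r}}|f_t|^2\varphi^2\le e^{2at_2}\int_{t_1}^{t_2}\int_{B_{2r}}e^{2u}|f_t|^2\varphi^2$, and then apply \eqref{e^2u f_t^2 estimate}.

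I do not expect a genuine obstacle here: the statement is a localized rerun of an estimate already in hand. The only points demanding a little care are to invoke the small-energy hypothesis in \emph{both} places where $\int_{B_{2r}}|df|^2$ enters — inside the differential inequality and in the $t_1$ boundary term — and to use $0\le\varphi\le1$ together with $\supp\varphi\subset B_{2r}$ in order to pass freely between the weighted integral $\int|df|^2\varphi^2$ and the local energy $E(B_{2r},\cdot)$.
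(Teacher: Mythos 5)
Your proposal is correct and follows essentially the same route as the paper: the first inequality is obtained by running the Proposition~\ref{energy est} computation with the local small-energy hypothesis replacing the global energy bound (the boundary term at $t_1$ supplies the ``$1$'' inside the parenthesis), and the second follows from the pointwise bound $e^{-2u}\le e^{2at}\le e^{2at_2}$, which is exactly the paper's argument. The only caveat is that the numerical constants in Proposition~\ref{energy est} and in this lemma are already slightly inconsistent in the paper itself, so your remark about ``absorbing the universal numerical factors'' is as precise as the source allows.
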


\begin{proof}
The first equation directly comes from \eqref{2 estimate}, by changing $E_0$ to $\varepsilon_1$.
Also, it is easy to see that
\begin{align*}
\int_{t_1}^{t_2}\int_{B_{2r}} |f_t|^2 \varphi^2 & =\int_{t_1}^{t_2}\int_{B_{2r}} e^{-2u} e^{2u}|f_t|^2 \varphi^2 \le e^{2at_2} \int_{t_1}^{t_2}\int_{B_{2r}} e^{2u} |f_t|^2 \varphi^2\\
& \le e^{2at_2} 4^2 \varepsilon_1 \left( 1 + \frac{e^{2at_2}-e^{2at_1}}{2ar^2} \right).
\end{align*}
\end{proof}

\subsection{Estimate for $\int |f_t|^2$}

The next step is to get estimate for derivative of $\int_{B_{2r}}|f_t|^2 \varphi^2$, which will lead to the control of itself.
For the future purpose, here we introduce more general version of it.
For now, we need $p=0$.

\begin{prop}
Let $(f,u)$ are solutions of \eqref{main0}.
For $p \ge 0$, we have
\begin{equation}\label{p+2 derivative}
\begin{aligned}
\frac{d}{dt}\int_{B_{2r}} e^{2u}|f_t|^{p+2} \varphi^2 &\le 2a(p+1)\int_{B_{2r}}e^{2u}|f_t|^{p+2}\varphi^2 + 4(p+2) \int_{B_{2r}}|f_t|^{p+2}|\nabla \varphi|^2\\
&- \frac{p+2}{4} \int_{B_{2r}}|\nabla f_t|^{2}|f_t|^{p} \varphi^2\\
&+ \left( (p+2)C_N + \frac{(p+2)C_N^2}{2} - 2b(p+1) \right) \int_{B_{2r}}|df|^2 |f_t|^{p+2}\varphi^2.
\end{aligned}
\end{equation}
Especially, we have
\begin{equation}\label{p+2 derivative result}
\begin{aligned}
\int_{B_{2r}}e^{2u}|f_t|^{p+2}\varphi^2 (t) &\le e^{2a(p+1)(t-t_0)}\\
&\quad \cdot \left( \int_{B_{2r}}e^{2u}|f_t|^{p+2}\varphi^2(t_0) + 4(p+2) \int_{t_0}^{t} \int_{B_{2r}}|f_t|^{p+2}|\nabla \varphi|^2 \right).
\end{aligned}
\end{equation}
\end{prop}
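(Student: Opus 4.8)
The plan is to differentiate the first flow equation once more in $t$, obtain a parabolic differential inequality for $\Phi(t) := \int_{B_{2r}} e^{2u}|f_t|^{p+2}\varphi^2$, and then integrate it by a Grönwall argument; throughout one uses that $(f,u)$ is smooth by Theorem~\ref{short time}. Set $w := f_t$. Writing $\tau(f) = \Delta f + A_f(df,df)$ via the embedding $N\hookrightarrow\R^L$ and differentiating \eqref{main0-1} in $t$,
\[
w_t \ = \ -2u_t\,w \ + \ e^{-2u}\Delta w \ + \ e^{-2u} R, \qquad R := \partial_t\big(A_f(df,df)\big),
\]
and differentiating the second fundamental form along $f$ together with $\|A\|,\|DA\|\le C_N$ gives the pointwise bound $|R|\le C_N|w|\,|df|^2 + C_N|df|\,|\nabla w|$. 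Multiplying by $e^{2u}$ this reads $e^{2u}w_t = \Delta w - 2u_t e^{2u}w + R$.

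\textbf{The main computation.} I would differentiate under the integral,
\[
\frac{d}{dt}\int_{B_{2r}} e^{2u}|w|^{p+2}\varphi^2 \ = \ 2\int_{B_{2r}} u_t e^{2u}|w|^{p+2}\varphi^2 \ + \ (p+2)\int_{B_{2r}} e^{2u}|w|^p\langle w, w_t\rangle\varphi^2,
\]
substitute $e^{2u}\langle w,w_t\rangle = \langle w,\Delta w\rangle - 2u_t e^{2u}|w|^2 + \langle w,R\rangle$ so that the $u_t$-terms combine into $-2(p+1)\int u_t e^{2u}|w|^{p+2}\varphi^2$, and finally insert $u_t = b e^{-2u}|df|^2 - a$ from \eqref{main0-2}; this produces exactly the coefficients $2a(p+1)\int e^{2u}|w|^{p+2}\varphi^2$ and $-2b(p+1)\int|df|^2|w|^{p+2}\varphi^2$. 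The Laplacian term is integrated by parts: the contribution involving $\nabla(|w|^p)$ equals $-(p+2)p\int|w|^{p-2}|\langle w,\nabla w\rangle|^2\varphi^2\le 0$ and is discarded (the manipulation of $|w|^p$ being justified by the usual regularization $|w|\rightsquigarrow\sqrt{|w|^2+\ep^2}$), the principal term is $-(p+2)\int|w|^p|\nabla w|^2\varphi^2$, and the cut-off term $-2(p+2)\int|w|^p\langle w,\nabla w\rangle\,\varphi\,\nabla\varphi$ is absorbed by Young's inequality into $\tfrac{p+2}{4}\int|w|^p|\nabla w|^2\varphi^2 + 4(p+2)\int|w|^{p+2}|\nabla\varphi|^2$. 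The curvature term $(p+2)\int|w|^p\langle w,R\rangle\varphi^2$ is estimated using the bound on $|R|$ and Young's inequality $C_N|w|^{p+1}|df|\,|\nabla w|\le\tfrac12|w|^p|\nabla w|^2+\tfrac{C_N^2}{2}|w|^{p+2}|df|^2$, giving $(p+2)C_N\int|df|^2|w|^{p+2}\varphi^2 + \tfrac{p+2}{2}\int|w|^p|\nabla w|^2\varphi^2 + \tfrac{(p+2)C_N^2}{2}\int|df|^2|w|^{p+2}\varphi^2$. Collecting everything, the $|w|^p|\nabla w|^2\varphi^2$-coefficients sum to $-(p+2)+\tfrac{p+2}{4}+\tfrac{p+2}{2}=-\tfrac{p+2}{4}$ and the $|df|^2|w|^{p+2}\varphi^2$-coefficients to $(p+2)C_N+\tfrac{(p+2)C_N^2}{2}-2b(p+1)$, which is precisely \eqref{p+2 derivative}.

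\textbf{The integrated bound.} For \eqref{p+2 derivative result} I would drop the nonpositive term $-\tfrac{p+2}{4}\int|\nabla f_t|^2|f_t|^p\varphi^2$, and observe that under the standing hypothesis $b\ge C_N^2$ (enlarging $C_N$ if necessary so that $C_N\ge 2$, which is harmless) the coefficient $(p+2)C_N+\tfrac{(p+2)C_N^2}{2}-2b(p+1)$ is $\le 0$, so that term is dropped as well. What remains is $\Phi'(t)\le 2a(p+1)\Phi(t)+G(t)$ with $G(t):=4(p+2)\int_{B_{2r}}|f_t|^{p+2}|\nabla\varphi|^2\ge 0$; multiplying by $e^{-2a(p+1)t}$, integrating from $t_0$ to $t$, and using $e^{-2a(p+1)s}\le e^{-2a(p+1)t_0}$ for $s\ge t_0$ gives $\Phi(t)\le e^{2a(p+1)(t-t_0)}\big(\Phi(t_0)+\int_{t_0}^t G\big)$, which is \eqref{p+2 derivative result}.

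The technical heart — and the only real obstacle — is the main computation: correctly enumerating all the terms generated by $\partial_t(A_f(df,df))$ and by the integration by parts, and calibrating the Young-inequality parameters so that the several $|\nabla f_t|^2|f_t|^p\varphi^2$ contributions combine to exactly $-\tfrac{p+2}{4}$ while the $|df|^2|f_t|^{p+2}\varphi^2$ contributions assemble into the stated coefficient. Everything else is routine.
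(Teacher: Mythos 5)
Your proof is correct and takes essentially the same route as the paper: differentiate the flow equation in time, test against $f_t|f_t|^p\varphi^2$, integrate by parts, substitute $u_t = be^{-2u}|df|^2-a$, and calibrate the Young inequalities so the $|\nabla f_t|^2|f_t|^p\varphi^2$ terms sum to $-\tfrac{p+2}{4}$. The only cosmetic difference is that you expand $\tfrac{d}{dt}\int e^{2u}|f_t|^{p+2}\varphi^2$ directly and then substitute for $e^{2u}\langle f_t,f_{tt}\rangle$, whereas the paper starts from $\int\langle(e^{2u}f_t)_t,f_t|f_t|^p\varphi^2\rangle$ and re-expresses the left side — the same algebra arranged in a slightly different order.
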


\begin{proof}
By taking time-derivative to \eqref{main0-1}, we have
\begin{equation*}
(e^{2u}f_t)_t = \Delta f_t + A(df,df)_t.
\end{equation*}
Taking inner product with $f_t |f_t|^p \varphi^2$ and integrating gives
\begin{align*}
\int \langle (e^{2u}f_t)_t,f_t |f_t|^p \varphi^2 \rangle &= \int \langle \Delta f_t, f_t |f_t|^p \varphi^2 \rangle + \int \langle A(df,df)_t,f_t |f_t|^p \varphi^2 \rangle\\
&= - \int |\nabla f_t|^2 |f_t|^p \varphi^2 - \int \langle \nabla f_t,f_t \rangle p |f_t|^{p-2} \varphi^2 \langle \nabla f_t,f_t \rangle\\
&\qquad - 2 \int \langle \nabla f_t,f_t \rangle |f_t|^p \varphi \nabla \varphi + \int \langle DA(df,df) \cdot f_t,f_t |f_t|^p \varphi^2 \rangle\\
&\qquad + \int \langle A(df_t,df),f_t |f_t|^p \varphi^2 \rangle\\
&= -\int |\nabla f_t|^2 |f_t|^p \varphi^2 - p\int |\langle \nabla f_t,f_t \rangle|^2 |f_t|^{p-2}\varphi^2\\
&\qquad + III + IV + V.
\end{align*}
Now we have
\begin{align*}
III &\le \frac{1}{4} \int |\nabla f_t|^2 \varphi^2 |f_t|^{p} + 4 \int |f_t|^{p+2} |\nabla \varphi|^2\\
IV &\le C_N \int |df|^2 |f_t|^{p+2} \varphi^2\\
V &\le C_N \int |\nabla f_t| |df| |f_t|^{p+1} \varphi^2\\
&\le \frac{1}{2} \int |\nabla f_t|^2 \varphi^2 |f_t|^p + \frac{C_N^2}{2} \int |df|^2 |f_t|^{p+2} \varphi^2.
\end{align*}
On the other hand, LHS becomes
\begin{align*}
\int \langle (e^{2u}f_t)_t,f_t |f_t|^p \varphi^2 \rangle &= \frac{1}{p+2}\frac{d}{dt}\int e^{2u}|f_t|^{p+2} \varphi^2 + 2\frac{p+1}{p+2} \int e^{2u}|f_t|^{p+2} u_t \varphi^2\\
&= \frac{1}{p+2}\frac{d}{dt}\int e^{2u}|f_t|^{p+2} \varphi^2 + 2b\frac{p+1}{p+2} \int |df|^2|f_t|^{p+2} \varphi^2\\
&\qquad - 2a \frac{p+1}{p+2} \int e^{2u}|f_t|^{p+2}\varphi^2.
\end{align*}
All together, we have
\begin{align*}
\frac{d}{dt}\int e^{2u}|f_t|^{p+2} \varphi^2 &\le 2a(p+1) \int e^{2u}|f_t|^{p+2}\varphi^2 + 4(p+2) \int |f_t|^{p+2}|\nabla \varphi|^2\\
&\qquad - \frac{p+2}{4} \int |\nabla f_t|^2 |f_t|^p \varphi^2\\
&\qquad + \left((p+2)C_N + \frac{(p+2)C_N^2}{2} - 2b(p+1) \right)\int |df|^2 |f_t|^{p+2}\varphi^2.
\end{align*}
By the choice of $b$, the last term is negative for all $p \ge 0$.
Hence, 
\begin{align*}
\frac{d}{dt}\int e^{2u}|f_t|^{p+2} \varphi^2 &\le 2a(p+1) \int e^{2u}|f_t|^{p+2} \varphi^2 + 4(p+2) \int |f_t|^{p+2} |\nabla \varphi|^2\\
\int_{B_{2r}} e^{2u}|f_t|^{p+2} \varphi^2(t) &\le e^{2a(p+1)(t-t_0)}\\
&\quad \cdot \left( \int_{B_{2r}}e^{2u}|f_t|^{p+2} \varphi^2 (t_0) + 4(p+2) \int_{t_0}^{t} \int_{B_{2r}}|f_t|^{p+2} |\nabla \varphi|^2 \right)
\end{align*}
by Gronwall's inequality.
\end{proof}

\begin{lemma} \label{e^2u f_t^2 t lemma}
Let $(f,u)$ are solutions of \eqref{main0}.
Assume that
\begin{equation*}
\sup_{T-2\delta r^2 \le t \le T}E(B_{2r},t) < \varepsilon_1.
\end{equation*}
Then for $t \in [T-\delta r^2,T]$, we have
\begin{equation} \label{e^2u f_t^2 estimate t}
\int_{B_{2r}} e^{2u} |f_t|^2 \varphi^2 (t) \le C_1(r,\delta,t)C_2(r,\delta,t) \varepsilon_1
\end{equation}
where
\begin{align}
C_1(r,\delta,t) &= 4^2 \left( 1 + e^{2at} \frac{1-e^{-2a\delta r^2}}{2ar^2} \right)\\
C_2(r,\delta,t) &= e^{6a\delta r^2} \left( \frac{1}{\delta r^2} + \frac{16(4)^2}{r^2}e^{2at} \right).
\end{align}
\end{lemma}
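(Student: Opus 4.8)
The plan is to integrate the Gronwall-type estimate \eqref{p+2 derivative result} with $p=0$ over a suitably chosen interval $[t_0,t]$ and then substitute the spacetime bounds of \Cref{f_t^2 estimates}. Taking $p=0$ in \eqref{p+2 derivative result} gives
\[
\int_{B_{2r}} e^{2u}|f_t|^2\varphi^2(t)\ \le\ e^{2a(t-t_0)}\Big(\,\int_{B_{2r}} e^{2u}|f_t|^2\varphi^2(t_0)\ +\ 8\int_{t_0}^{t}\int_{B_{2r}}|f_t|^2|\nabla\varphi|^2\,\Big)
\]
for any $t_0\le t$. I would fix $t_0$ in the slab $[\,T-2\delta r^2,\ T-\delta r^2\,]$, so that for every $t\in[\,T-\delta r^2,\ T\,]$ the interval $[t_0,t]$ lies inside $[\,T-2\delta r^2,\ T\,]$ — exactly the range on which the energy-smallness hypothesis is imposed — while $0\le t-t_0\le 2\delta r^2$, which keeps the prefactor bounded: $e^{2a(t-t_0)}\le e^{4a\delta r^2}\le e^{6a\delta r^2}$.

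The choice of $t_0$ is by averaging. Applying \eqref{e^2u f_t^2 estimate} on the slab $[\,T-2\delta r^2,\ T-\delta r^2\,]$ (of length $\delta r^2$), the mean over that slab of $s\mapsto\int_{B_{2r}} e^{2u}|f_t|^2\varphi^2(s)$ is at most $\tfrac{1}{\delta r^2}\,4^2\varepsilon_1\big(1+\tfrac{e^{2a(T-\delta r^2)}-e^{2a(T-2\delta r^2)}}{2ar^2}\big)$, so some $t_0$ in the slab satisfies $\int_{B_{2r}} e^{2u}|f_t|^2\varphi^2(t_0)\le \tfrac{1}{\delta r^2}\,C_1(r,\delta,t)\,\varepsilon_1$, after using $e^{2a(T-\delta r^2)}\le e^{2at}$ and $e^{2a(T-\delta r^2)}-e^{2a(T-2\delta r^2)}\le e^{2at}(1-e^{-2a\delta r^2})$ to recover the combination occurring in $C_1$. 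This yields the $\tfrac{1}{\delta r^2}$ term of $C_2$.

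For the cutoff-gradient term I would use $|\nabla\varphi|\le 4/r$, giving $8\int_{t_0}^{t}\int_{B_{2r}}|f_t|^2|\nabla\varphi|^2\le \tfrac{128}{r^2}\int_{t_0}^{t}\int_{B_{2r}}|f_t|^2$, and then invoke the local $L^2$ bound \eqref{f_t^2 estimate} on $[t_0,t]\subseteq[\,T-2\delta r^2,\ T\,]$; bounding $e^{2at_2}\le e^{2at}$ and the factor $\big(1+\tfrac{e^{2at}-e^{2at_0}}{2ar^2}\big)$ by a fixed multiple of $C_1(r,\delta,t)/4^2$ gives a bound of the shape $(\mathrm{const})\cdot\tfrac{e^{2at}}{r^2}\,C_1(r,\delta,t)\,\varepsilon_1$, which is the $\tfrac{16(4)^2}{r^2}e^{2at}$ term of $C_2$. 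Collecting the two contributions inside the parenthesis, factoring out $C_1(r,\delta,t)\varepsilon_1$, and multiplying through by $e^{2a(t-t_0)}\le e^{6a\delta r^2}$ then produces exactly $\int_{B_{2r}} e^{2u}|f_t|^2\varphi^2(t)\le C_1(r,\delta,t)\,C_2(r,\delta,t)\,\varepsilon_1$.

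I do not expect a genuine analytic obstacle: the estimate rests entirely on \eqref{p+2 derivative result} and \Cref{f_t^2 estimates}, and the work is organizational. The points that need care are (i) arranging that every time interval on which the hypothesis (or \Cref{f_t^2 estimates}) is invoked lies in $[\,T-2\delta r^2,\ T\,]$, which is what forces $t_0$ into the lower half-slab; (ii) estimating $\int|f_t|^2|\nabla\varphi|^2$ over $\supp\nabla\varphi$ using the already-available local control of $f_t$; and (iii) tracking the several $e^{2a(\cdot)r^2}$ factors and numerical constants, which is precisely why the slightly lossy $e^{6a\delta r^2}$ and the constant $16(4)^2$ appear in $C_2$.
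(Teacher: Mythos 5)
Your proposal follows essentially the same route as the paper: apply the Gronwall estimate \eqref{p+2 derivative result} at $p=0$, choose $t_0$ by averaging the already-proved spacetime bound \eqref{e^2u f_t^2 estimate}, and control the cutoff-gradient term with \eqref{f_t^2 estimate}. The only organizational differences are that the paper averages over $[t-\delta r^2, t]$ rather than $[T-2\delta r^2, T-\delta r^2]$ (so $t-t_0\le\delta r^2$ rather than $2\delta r^2$), and, more importantly, the paper makes explicit the two-cutoff device you only allude to: it takes $\varphi$ supported in $B_{3r/2}$ and a second cutoff $\psi\equiv 1$ on $B_{3r/2}$ supported in $B_{2r}$, so that $\int_{B_{2r}}|f_t|^2|\nabla\varphi|^2\le\frac{16}{r^2}\int_{B_{2r}}|f_t|^2\psi^2$ and Lemma~\ref{f_t^2 estimates} (which controls $\int|f_t|^2\varphi^2$, not $\int_{B_{2r}}|f_t|^2$) can be invoked with $\psi$ as the cutoff. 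Your bound $8\int\int_{B_{2r}}|f_t|^2|\nabla\varphi|^2\le\frac{128}{r^2}\int\int_{B_{2r}}|f_t|^2$ is not directly in the form Lemma~\ref{f_t^2 estimates} controls; inserting the nested cutoff $\psi$ fixes this, and the slightly worse exponential factor $e^{4a\delta r^2}$ and $(1-e^{-4a\delta r^2})\le 2(1-e^{-2a\delta r^2})$ from your wider $t_0$-slab are still absorbed by the slack built into $C_2$.
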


\begin{proof}
Suppose $\varphi$ be a cut-off function supported on $B_{3r/2}$ and $\varphi \equiv 1$ on $B_{r}$ and $|\nabla \varphi| \le \frac{4}{r}$.
Also, let $\psi$ be a cut-off function supported on $B_{2r}$ and $\psi \equiv 1$ on $B_{3r/2}$ and $|\nabla \psi| \le \frac{4}{r}$.
From \eqref{p+2 derivative result} for $p=0$ and using \eqref{f_t^2 estimate}, we have
\begin{align*}
\int_{B_{2r}} e^{2u}|f_t|^{2} \varphi^2(t) &\le e^{2a(t-t_0)} \left( \int_{B_{2r}}e^{2u}|f_t|^{2} \varphi^2 (t_0) + 8 \int_{t_0}^{t} \int_{B_{3r/2}}|f_t|^{2} |\nabla \varphi|^2 \right)\\
&\le e^{2a(t-t_0)} \left( \int_{B_{2r}}e^{2u}|f_t|^{2} \varphi^2 (t_0) + \frac{8(4)^2}{r^2} \int_{t_0}^{t} \int_{B_{2r}}|f_t|^{2} \psi^2 \right)\\
&\le e^{2a(t-t_0)}\int_{B_{2r}}e^{2u}|f_t|^{2} \varphi^2 (t_0)\\
&\quad  + e^{2a(t-t_0)} \frac{8(4)^2}{r^2}e^{2at} 4^2 \varepsilon_1 \left( 1 + \frac{e^{2at}-e^{2at_0}}{2ar^2} \right).
\end{align*}
Now take $t_0 \in [t-\delta r^2,t]$ such that
\begin{equation*}
\int_{B_{2r}}e^{2u}|f_t|^2 \varphi^2 (t_0) = \min_{t-\delta r^2 \le s \le t} \int_{B_{2r}}e^{2u}|f_t|^2 \varphi^2 (s).
\end{equation*}
Then by \eqref{e^2u f_t^2 estimate},
\begin{equation*}
\int_{B_{2r}}e^{2u}|f_t|^{2} \varphi^2 (t_0) \le \frac{1}{\delta r^2}\int_{t-\delta r^2}^{t} \int_{B_{2r}}e^{2u}|f_t|^2 \varphi^2 \le \frac{1}{\delta r^2} 4^2 \varepsilon_1 \left( 1 + \frac{e^{2at}-e^{2a(t-\delta r^2)}}{2ar^2} \right).
\end{equation*}
Therefore,
\begin{align*}
\int_{B_{2r}} e^{2u}|f_t|^{2} \varphi^2(t) &\le 4^2 \varepsilon_1 \left( 1 + \frac{e^{2at}-e^{2at-2a\delta r^2}}{2ar^2} \right) \left( \frac{1}{\delta r^2} + \frac{8 (4)^2}{r^2} e^{2at} \right) e^{2a\delta r^2}.
\end{align*}
This completes the proof.
\end{proof}

\begin{cor}
Under the same assumption as above, we also have
\begin{align}
\int_{t-\delta r^2}^{t} \int_{B_{2r}} |\nabla f_t|^2 \varphi^2 &\le C C_1(r,\delta,t) C_2(r,\delta,t) \varepsilon_1 \label{f_ti^2 estimate}\\
\int_{t-\delta r^2}^{t} \int_{B_{2r}} |df|^2 |f_t|^2 \varphi^2 &\le C C_1(r,\delta,t) C_2(r,\delta,t)\varepsilon_1. \label{df^2 f_t^2 estimate}
\end{align}
\end{cor}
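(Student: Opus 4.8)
The plan is to obtain both estimates directly from the differential inequality \eqref{p+2 derivative} with $p=0$, by transposing the two dissipation-type terms to the left, integrating in time over the window $[t-\delta r^2,t]$, and then bounding the resulting right-hand side with the estimates already established in \Cref{e^2u f_t^2 t lemma} and in \eqref{e^2u f_t^2 estimate}, \eqref{f_t^2 estimate}. The one structural ingredient beyond those estimates is the positivity of the coefficient multiplying $\int|df|^2|f_t|^2\varphi^2$, which is precisely the sign that was used to pass from \eqref{p+2 derivative} to \eqref{p+2 derivative result} and which the standing hypothesis $b\ge C_N^2$ is designed to guarantee.

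Concretely, set $p=0$ in \eqref{p+2 derivative} and rearrange:
\begin{align*}
\tfrac{1}{2}\int_{B_{2r}}|\nabla f_t|^2\varphi^2 &+ \bigl(2b-2C_N-C_N^2\bigr)\int_{B_{2r}}|df|^2|f_t|^2\varphi^2\\
&\le -\frac{d}{dt}\int_{B_{2r}}e^{2u}|f_t|^2\varphi^2 + 2a\int_{B_{2r}}e^{2u}|f_t|^2\varphi^2 + 8\int_{B_{2r}}|f_t|^2|\nabla\varphi|^2 .
\end{align*}
By the choice of $b$ the coefficient $2b-2C_N-C_N^2$ is a positive constant, so both terms on the left are nonnegative. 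Integrating from $t-\delta r^2$ to $t$ and discarding the nonnegative endpoint contribution $\int_{B_{2r}}e^{2u}|f_t|^2\varphi^2(t)$ yields
\begin{align*}
\tfrac{1}{2}\int_{t-\delta r^2}^{t}\!\!\int_{B_{2r}}|\nabla f_t|^2\varphi^2 &+ \bigl(2b-2C_N-C_N^2\bigr)\int_{t-\delta r^2}^{t}\!\!\int_{B_{2r}}|df|^2|f_t|^2\varphi^2\\
&\le \int_{B_{2r}}e^{2u}|f_t|^2\varphi^2(t-\delta r^2) + 2a\int_{t-\delta r^2}^{t}\!\!\int_{B_{2r}}e^{2u}|f_t|^2\varphi^2 + 8\int_{t-\delta r^2}^{t}\!\!\int_{B_{2r}}|f_t|^2|\nabla\varphi|^2 .
\end{align*}
The three terms on the right are then handled by results already at hand: the middle term by \eqref{e^2u f_t^2 estimate}; the last term, using $|\nabla\varphi|^2\le 16/r^2$, by \eqref{f_t^2 estimate}; and the first term by \Cref{e^2u f_t^2 t lemma} at the initial time (or, equivalently, by choosing the initial time inside $[t-\delta r^2,t]$ to be a point where $\int e^{2u}|f_t|^2\varphi^2$ is minimal and estimating it by $(\delta r^2)^{-1}\!\int_{t-\delta r^2}^{t}\!\int_{B_{2r}}e^{2u}|f_t|^2\varphi^2$ via \eqref{e^2u f_t^2 estimate}). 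Each of these is bounded by $C\,C_1(r,\delta,t)C_2(r,\delta,t)\varepsilon_1$ for a universal constant $C$ (the factor $\bigl(2b-2C_N-C_N^2\bigr)^{-1}$ being absorbed into $C$); retaining the first term on the left then gives \eqref{f_ti^2 estimate} and retaining the second gives \eqref{df^2 f_t^2 estimate}.

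The only part requiring care — and it is entirely analogous to \Cref{e^2u f_t^2 t lemma} — is the bookkeeping with the cut-off functions and the time windows: to estimate $\int_{B_{2r}}|f_t|^2|\nabla\varphi|^2$ and the starting value one should work with a nested chain $B_r\subset\supp\varphi\subset B_{3r/2}\subset\supp\psi\subset B_{2r}$, and apply \eqref{f_t^2 estimate} and \eqref{e^2u f_t^2 estimate t} only on sub-intervals of $[T-2\delta r^2,T]$ on which the smallness $\sup E(B_{2r},\cdot)<\varepsilon_1$ is available. Once that is arranged, the remaining steps amount to a routine collection of constants into the factor $C_1(r,\delta,t)C_2(r,\delta,t)$.
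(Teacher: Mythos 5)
Your argument is correct and is essentially identical to the paper's: integrate \eqref{p+2 derivative} at $p=0$ over $[t-\delta r^2,t]$, move the two dissipative terms to the left using the sign condition $2b-2C_N-C_N^2>0$ (guaranteed by $b\ge C_N^2$), and bound the remaining right-hand side by \eqref{e^2u f_t^2 estimate}, \eqref{f_t^2 estimate} and \Cref{e^2u f_t^2 t lemma}. One small caveat: your parenthetical ``or, equivalently, choosing the initial time inside $[t-\delta r^2,t]$ where $\int e^{2u}|f_t|^2\varphi^2$ is minimal'' is not actually equivalent, since integrating from that minimal time $t_0$ only controls $\int_{t_0}^{t}$ rather than $\int_{t-\delta r^2}^{t}$; the endpoint value at $t-\delta r^2$ should be bounded via \Cref{e^2u f_t^2 t lemma} (as you do in the main line), with the usual adjustment of time windows.
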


\begin{proof}
From the equation \eqref{p+2 derivative} with $p=0$, we can integrate from $t-\delta r^2$ to $t$.
\begin{align*}
\left. \int_{B_{2r}} e^{2u}|f_t|^{2} \varphi^2 \right|^t_{t-\delta r^2} &\le 2a \int_{t-\delta r^2}^{t}\int_{B_{2r}}e^{2u}|f_t|^{2}\varphi^2 + 8 \int_{t-\delta r^2}^{t} \int_{B_{2r}}|f_t|^{2}|\nabla \varphi|^2\\
&- \frac{1}{2}\int_{t-\delta r^2}^{t} \int_{B_{2r}}|\nabla f_t|^{2} \varphi^2\\
&+ \left( 2C_N + C_N^2 - 2b \right)\int_{t-\delta r^2}^{t} \int_{B_{2r}}|df|^2 |f_t|^{2}\varphi^2.
\end{align*}
Hence, we have
\begin{align*}
\frac{1}{2}\int_{t-\delta r^2}^{t} \int_{B_{2r}}|\nabla f_t|^2 \varphi^2 &\le 2C_1(r,\delta,t) C_2(r,\delta,t) \varepsilon_1 + 2a C_1(r,\delta,t) \varepsilon_1 + 8 \frac{4^2}{r^2} e^{2at} C_1(r,\delta,t) \varepsilon_1\\
&\le C C_1(r,\delta,t) C_2(r,\delta,t)\varepsilon_1.
\end{align*}
The other inequality is similar.
\end{proof}

\subsection{Higher estimate for time derivatives}

In this subsection we will get estimate for $e^{2u}|f_t|^{4}$.
We first build up $(p+2)$-version of \Cref{2 estimate}.

\begin{prop}
For solutions $(f,u)$ of \eqref{main0} and for $p \ge 1$, we have
\begin{equation} \label{p+2 estimate}
\begin{aligned}
\int_{t_1}^{t_2} \int_{B_{2r}}e^{2u}|f_t|^{p+2}\varphi^2 &\le C \int_{t_1}^{t_2} \int_{B_{2r}} |f_{ti}|^2 |f_t|^{p-1} \varphi^2 + C \int_{t_1}^{t_2} \int_{B_{2r}}|f_t|^{p+1}|\nabla \varphi|^2\\
&\quad + C \int_{t_1}^{t_2}\int_{B_{2r}}|df|^2 |f_t|^{p+1}\varphi^2.
\end{aligned}
\end{equation}
\end{prop}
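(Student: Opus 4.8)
The plan is to test the equation $e^{2u}f_t=\tau(f)$ against $f_t|f_t|^p\varphi^2$ and integrate over $B_{2r}\times[t_1,t_2]$, mirroring the proof of Proposition \ref{energy est}, which is the case $p=0$. Writing $\tau(f)=\Delta f+A_f(df,df)$ with $A_f(df,df)\in(T_fN)^\perp$, and using that $f_t$ (hence $f_t|f_t|^p\varphi^2$) is tangent to $N$, the curvature term drops out: $\langle\tau(f),f_t|f_t|^p\varphi^2\rangle=\langle\Delta f,f_t|f_t|^p\varphi^2\rangle$. The left-hand side of the test identity is exactly $\int_{t_1}^{t_2}\int_{B_{2r}}e^{2u}|f_t|^{p+2}\varphi^2$, so it remains to estimate $\int_{t_1}^{t_2}\int_{B_{2r}}\langle\Delta f,f_t|f_t|^p\varphi^2\rangle$ from above.

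Next I would integrate by parts in space. Since $\varphi$ is supported in $B_{2r}$ there is no boundary term, and $\nabla(f_t|f_t|^p\varphi^2)$ produces three pieces: one with $\nabla$ on $f_t$, giving $-\int\langle\nabla_i f,\nabla_i f_t\rangle|f_t|^p\varphi^2$; one with $\nabla$ on $|f_t|^p$, controlled pointwise by $|\nabla|f_t|^p|\le p|f_t|^{p-1}|\nabla f_t|$; and one with $\nabla$ on $\varphi^2$, giving $-2\int\langle\nabla_i f,f_t\rangle|f_t|^p\varphi\,\nabla_i\varphi$. The sum of the first two pieces is dominated pointwise by $(1+p)\,|df|\,|\nabla f_t|\,|f_t|^p$, and here — this is the only place the hypothesis $p\ge1$ enters — Young's inequality applied to the splitting $|df|\,|\nabla f_t|\,|f_t|^p=\big(|\nabla f_t|\,|f_t|^{(p-1)/2}\big)\big(|df|\,|f_t|^{(p+1)/2}\big)$ bounds it by a small multiple of $|\nabla f_t|^2|f_t|^{p-1}$ plus a multiple of $|df|^2|f_t|^{p+1}$, i.e.\ by the first two terms on the right of \eqref{p+2 estimate}. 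The $\varphi$-derivative piece is $\le 2\,|df|\,|f_t|^{p+1}\varphi|\nabla\varphi|\le|df|^2|f_t|^{p+1}\varphi^2+|f_t|^{p+1}|\nabla\varphi|^2$, producing the remaining two terms. Collecting the bounds over $B_{2r}\times[t_1,t_2]$ and recalling $|f_{ti}|^2=|\nabla f_t|^2$ gives \eqref{p+2 estimate}, with a constant depending only on $p$.

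I do not anticipate a genuine obstacle; the point is structural rather than technical. In contrast to the case $p=0$, where $-\int\langle df,df_t\rangle\varphi^2$ was deliberately kept as $-\tfrac12\tfrac{d}{dt}\int|df|^2\varphi^2$ in order to extract the difference of energies, for $p\ge1$ there is no reason to preserve a time-derivative structure, so that term is simply estimated pointwise by Young's inequality as above. The one minor technicality is that $|f_t|^{p-2}$ appears when one differentiates $|f_t|^p$ and is singular where $f_t=0$ if $p<2$; this is handled in the usual way by replacing $|f_t|$ with $\sqrt{|f_t|^2+\ep}$, carrying out the estimates, and letting $\ep\to0$, all bounds being uniform in $\ep$.
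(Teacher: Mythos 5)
Your proof is correct and follows essentially the same route as the paper: both test $e^{2u}f_t=\tau(f)$ against $f_t|f_t|^p\varphi^2$, drop the normal curvature term, integrate $\Delta f$ by parts to produce the three pieces (derivative on $f_t$, on $|f_t|^p$, on $\varphi^2$), and close with Young's inequality. The paper merely records an intermediate pointwise identity $2e^{2u}|f_t|^2=-2\langle f_{ti},f_i\rangle+\nabla_i(2\langle f_t,f_i\rangle)$ before multiplying by the weight, but the resulting three integrals and their estimates are identical to yours.
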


\begin{proof}
First note that for any $p\ge 1$, $\nabla_i |f_t|^p = p |f_t|^{p-2}\langle f_{ti},f_t \rangle$.
Also, for simplicity, denote $\int \int = \int_{t_1}^{t_2} \int_{B_{2r}}$.
Multiplying $\tau(f)$ to \eqref{main0-1} gives
\begin{equation*}
2e^{2u}|f_t|^2 = -2 \langle f_{ti},f_i \rangle + \nabla_i ( 2 \langle f_t,f_i \rangle ).
\end{equation*}
Multiplying $|f_t|^p \varphi^2$ for $p \ge 1$ and integrating gives
\begin{align*}
2 \int \int e^{2u}|f_t|^{p+2}\varphi^2 &= -2 \int \int\langle f_{ti},f_i \rangle |f_t|^p \varphi^2 - 4 \int \int \langle f_t,f_i \rangle |f_t|^p \varphi \nabla_i \varphi\\
&\qquad - 2p \int \int \langle f_t,f_i \rangle \varphi^2 |f_t|^{p-2} \langle f_{ti},f_t \rangle\\
&= I + II + III.
\end{align*}
Now
\begin{align*}
I &\le C\int \int |f_{ti}|^2 |f_t|^{p-1} \varphi^2 + C\int \int|df|^2 |f_t|^{p+1} \varphi^2\\
II &\le C \int \int |f_t|^{p+1} |\nabla \varphi|^2 + C \int \int |df|^2 |f_t|^{p+1} \varphi^2\\
III &\le C \int \int |f_{ti}|^2 |f_t|^{p-1} \varphi^2 + C \int \int|df|^2 |f_t|^{p+1} \varphi^2.
\end{align*}
This completes the proof.
\end{proof}

Now we will show the desired estimate.

\begin{prop} \label{tau estimate}
Let $(f,u)$ are solutions of \eqref{main0}.
Assume that
\begin{equation*}
\sup_{T-2\delta r^2 \le t \le T}E(B_{2r},t) < \varepsilon_1.
\end{equation*}
Then for $t \in [T-\delta r^2,T]$, we have
\begin{equation} \label{e^2u f_t^4 estimate t}
\int_{B_{2r}} e^{2u}|f_t|^4 \varphi^2(t) \le C_3
\end{equation}
where
\begin{equation} \label{C_3 general}
C_3 = C C_1(r,\delta,t) C_2(r,\delta,t)^3 \varepsilon_1.
\end{equation}
\end{prop}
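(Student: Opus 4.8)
The plan is to first establish the space--time bound $Q\le C\,C_1C_2^{2}\varepsilon_1$, where $Q$ denotes $\iint e^{2u}|f_t|^4\varphi^2$ over the slab $[t-\delta r^2,t]\times B_{2r}$ and I abbreviate $C_i=C_i(r,\delta,t)$, and then to upgrade it to the stated pointwise-in-$t$ estimate by means of \eqref{p+2 derivative result} with $p=2$. Throughout I would use that the solution is smooth on the region in question, so that all the integrals below are finite a priori (this is what legitimizes the absorption); that the earlier estimates of this section apply to any cut-off supported in $B_{2r}$ and equal to $1$ on a fixed smaller concentric ball, so an integral weighted by $|\nabla\varphi|^2$ may be replaced by $\tfrac{C}{r^2}$ times the same integral weighted by a slightly enlarged cut-off; and that $C_2$ dominates $\tfrac1{\delta r^2}$, $\tfrac1{r^2}e^{2at}$ and the constant $a$ up to a universal factor, so every coefficient produced below is at most $CC_2$.

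To bound $Q$, I would apply \eqref{p+2 estimate} with $p=2$, which gives
\[
Q\ \le\ C\iint|f_{ti}|^2|f_t|\varphi^2\ +\ C\iint|f_t|^3|\nabla\varphi|^2\ +\ C\iint|df|^2|f_t|^3\varphi^2\ =:\ C\,(A+B+C').
\]
The two ``top-order'' quantities $\iint|\nabla f_t|^2|f_t|^2\varphi^2$ and $\iint|df|^2|f_t|^4\varphi^2$ are controlled by integrating \eqref{p+2 derivative} with $p=2$ from $t_0$ to $t$ and keeping both nonpositive terms on the left --- the curvature term being nonpositive precisely because $b\ge C_N^2$; choosing $t_0\in[t-\delta r^2,t]$ to minimize $\int_{B_{2r}}e^{2u}|f_t|^4\varphi^2$ bounds the boundary term by $\tfrac1{\delta r^2}Q$, and after enlarging the cut-off the $|\nabla\varphi|^2$--integral is at most $\tfrac{C}{r^2}e^{2at}Q$, so that
\[
\iint|\nabla f_t|^2|f_t|^2\varphi^2,\qquad \iint|df|^2|f_t|^4\varphi^2\ \le\ C\,C_2\,Q.
\]
Then Cauchy--Schwarz (in space and then in time) gives
\[
A\ \le\ \Big(\iint|f_{ti}|^2\varphi^2\Big)^{1/2}\Big(\iint|f_{ti}|^2|f_t|^2\varphi^2\Big)^{1/2},\qquad C'\ \le\ \Big(\iint|df|^2|f_t|^2\varphi^2\Big)^{1/2}\Big(\iint|df|^2|f_t|^4\varphi^2\Big)^{1/2},
\]
and, feeding in \eqref{f_ti^2 estimate} and \eqref{df^2 f_t^2 estimate} for the first factors and the bound just obtained for the second, $A,C'\le CC_2(C_1\varepsilon_1)^{1/2}Q^{1/2}$. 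For $B$ I would enlarge the cut-off, apply Cauchy--Schwarz, and use \eqref{f_t^2 estimate} together with $e^{-2u}\le e^{2at}$ on the slab to get $B\le\tfrac{C}{r^2}e^{2at}(C_1\varepsilon_1)^{1/2}Q^{1/2}\le CC_2(C_1\varepsilon_1)^{1/2}Q^{1/2}$. Hence $Q\le CC_2(C_1\varepsilon_1)^{1/2}Q^{1/2}$, and since $Q<\infty$ the $Q^{1/2}$ absorbs, leaving $Q\le C\,C_1C_2^{2}\varepsilon_1$. Finally, fixing $t\in[T-\delta r^2,T]$ and applying \eqref{p+2 derivative result} with $p=2$ --- again choosing $t_0$ minimizing $\int_{B_{2r}}e^{2u}|f_t|^4\varphi^2$, so that $\int_{B_{2r}}e^{2u}|f_t|^4\varphi^2(t_0)\le\tfrac1{\delta r^2}Q$, bounding the remaining $|\nabla\varphi|^2$--integral by $\tfrac{C}{r^2}e^{2at}Q$, and using $e^{6a(t-t_0)}\le e^{6a\delta r^2}$ --- one obtains
\[
\int_{B_{2r}}e^{2u}|f_t|^4\varphi^2(t)\ \le\ e^{6a\delta r^2}\Big(\tfrac{C}{\delta r^2}+\tfrac{C}{r^2}e^{2at}\Big)Q\ \le\ C\,C_2\cdot C\,C_1C_2^{2}\varepsilon_1\ =\ C\,C_1C_2^{3}\varepsilon_1\ =\ C_3,
\]
which is \eqref{e^2u f_t^4 estimate t}.

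I expect the main obstacle to be making the absorption in the middle step fully rigorous. Every route to a bound on $Q$ seems to cost control of a still higher weighted moment of $f_t$, and although \eqref{p+2 derivative} converts those moments back into $Q$ (plus harmless $\varepsilon_1$--terms), the $|\nabla\varphi|^2$ pieces bring in integrals over slightly larger balls, so the absorption must be organized over a finite chain of concentric cut-offs, with the a priori finiteness of $\iint_{B_{2r}}e^{2u}|f_t|^4$ as the base case, while ensuring the final constant still depends only on $\varepsilon_1$, $r$, $\delta$ (and $a,b,C_N$) and not on that a priori bound. The standing sign condition $b\ge C_N^2$ is exactly what makes the curvature terms in \eqref{p+2 derivative} have the sign needed for this scheme to close.
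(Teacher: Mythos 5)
Your strategy differs from the paper's in one structural way: the paper builds a ladder, first establishing the cubic space--time bound $\iint e^{2u}|f_t|^3\varphi^2$ (Step~1), then the pointwise cubic bound (Step~2), then $\iint |\nabla f_t|^2|f_t|\varphi^2$ and $\iint |df|^2|f_t|^3\varphi^2$ (Step~3), and only then feeds these into \eqref{p+2 estimate} with $p=2$ (Steps~4--5), so that the right-hand side at each stage contains only quantities that have \emph{already} been bounded. You instead go straight to $p=2$ and Cauchy--Schwarz-split $\iint|f_{ti}|^2|f_t|\varphi^2$ and $\iint|df|^2|f_t|^3\varphi^2$ into a controlled factor (the $|f_t|^2$-weighted estimates) times a factor involving $Q^{1/2}$, seeking an absorption $Q\le C C_2 (C_1\varepsilon_1)^{1/2} Q^{1/2}$. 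The Cauchy--Schwarz identities are fine, and if the absorption closed you would indeed land on the paper's constants.

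The gap, which you flag but misname, is that the absorption does not close in the way you describe. The quartic quantity on the right of your inequality is not $Q$ with the same cut-off: the $|\nabla\varphi|^2$ term, and the $|\nabla\varphi|^2$ term generated in the intermediate step where you bound $\iint|\nabla f_t|^2|f_t|^2\varphi^2$ and $\iint|df|^2|f_t|^4\varphi^2$ via \eqref{p+2 derivative}, both bring in $\iint e^{2u}|f_t|^4\psi^2$ with $\psi$ supported on a strictly larger ball. So what you actually have is $Q_\varphi \le K\, Q_\psi^{1/2}$, and dividing by $Q^{1/2}$ is not licensed. A ``finite chain of concentric cut-offs'' does not rescue this: a finite chain terminates at a cut-off near $\partial B_{2r}$, and the resulting bound still depends on the a priori size of $\iint_{B_{2r}}e^{2u}|f_t|^4$, which is not controlled by $\varepsilon_1$. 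What one actually needs is an \emph{infinite} Caccioppoli-type iteration over radii $\rho_n\nearrow 2r$, together with a check that the product of the growing constants $K_n$ (the gradients of the nested cut-offs blow up as the gaps shrink) still converges in the telescoped exponent; this can be made to work, but it is a genuinely additional lemma with its own bookkeeping, and it also forces you to re-prove the input estimates \eqref{f_ti^2 estimate}, \eqref{df^2 f_t^2 estimate} for every cut-off in the chain rather than for the single pair $(\varphi,\psi)$ the paper uses. The paper's ladder avoids all of this: because the cubic quantities are bounded before the quartic ones are touched, the enlargement from $\varphi$ to $\psi$ lands on an already-controlled term, and the argument is non-circular with a fixed finite family of cut-offs. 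If you want to keep your route, you should either supply the iteration lemma explicitly (with the cut-offs and constants tracked), or, more in the spirit of the paper, insert the intermediate cubic step so that your Cauchy--Schwarz factors are already bounded and no absorption is needed.
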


Note that $C_3$ depends on $r,t,\delta$.

\begin{proof}
For simplicity, denote $C_1 = C_1(r,\delta,t)$, $C_2 = C_2(r,\delta,t)$.
Also, denote $C$ for any number appeared in computations.
Suppose $\varphi$ be a cut-off function supported on $B_{3r/2}$ and $\varphi \equiv 1$ on $B_{r}$ and $|\nabla \varphi| \le \frac{4}{r}$.
Also, let $\psi$ be a cut-off function supported on $B_{2r}$ and $\psi \equiv 1$ on $B_{3r/2}$ and $|\nabla \psi| \le \frac{4}{r}$.
Let $t_1=t-\delta r^2$ and $t_2=t$.

The proof consists of several steps, increasing power of $|f_t|$.

{\bf Step 1.} Estimate for $\int \int e^{2u}|f_t|^3 \varphi^2$.

From \eqref{p+2 estimate} with $p=1$ and using \eqref{f_t^2 estimate}, \eqref{f_ti^2 estimate} and \eqref{df^2 f_t^2 estimate}, we have
\begin{equation} \label{e^2u f_t^3 estimate}
\int_{t-\delta r^2}^{t} \int_{B_{2r}} e^{2u}|f_t|^3 \varphi^2 \le CC_1C_2 \varepsilon_1
\end{equation}
and
\begin{equation} \label{f_t^3 estimate}
\int_{t-\delta r^2}^{t} \int_{B_{2r}} |f_t|^3 \varphi^2 \le  e^{2at} CC_1C_2 \varepsilon_1.
\end{equation}

{\bf Step 2.} Estimate for $\int e^{2u}|f_t|^3 \varphi^2$.

Now let $t_0 \in [t-\delta r^2,t]$ be such that
\begin{equation*}
\int_{B_{2r}}e^{2u}|f_t|^3 \varphi^2 (t_0) = \min_{t-\delta r^2 \le s \le t} \int_{B_{2r}}e^{2u}|f_t|^3 \varphi^2 (s).
\end{equation*}
From \eqref{p+2 derivative result} with $p=1$ and using \eqref{e^2u f_t^3 estimate} and \eqref{f_t^3 estimate}, we have
\begin{align*}
\int_{B_{2r}}e^{2u}|f_t|^3 \varphi^2(t) &\le e^{4a\delta r^2} \left( \int_{B_{2r}}e^{2u}|f_t|^3 \varphi^2(t_0) + 12 \int_{t_0}^{t} \int_{B_{3r/2}}|f_t|^3 |\nabla \varphi|^2 \right)\\
&\le e^{4a\delta r^2} \left( \frac{1}{\delta r^2} \int_{t-\delta r^2}^{t} \int_{B_{2r}} e^{2u}|f_t|^3 \varphi^2 + 12 \frac{4^2}{r^2} \int_{t-\delta r^2}^{t}\int_{B_{2r}} |f_t|^3 \psi^2 \right)\\
&\le e^{4a\delta r^2} \left( \frac{1}{\delta r^2}C C_1C_2 \varepsilon_1 + 12 \frac{4^2}{r^2}  e^{2at} C C_1C_2 \varepsilon_1 \right)\\
&=  C C_1 C_2 \varepsilon_1 \left(\frac{1}{\delta r^2} + \frac{12(4)^2}{r^2} e^{2at}\right)  e^{4a\delta r^2}.
\end{align*}
So, simply,
\begin{equation} \label{e^2u f_t^3 estimate t}
\int_{B_{2r}}e^{2u}|f_t|^3 \varphi^2(t) \le C C_1 C_2^2 \varepsilon_1.
\end{equation}

{\bf Step 3.} Estimate for $\int \int |\nabla f_t|^2 |f_t| \varphi^2$ and $\int \int |df|^2 |f_t|^3 \varphi^2$.

From \eqref{p+2 derivative} with $p=1$, we can integrate from $t-\delta r^2$ to $t$.
\begin{align*}
\left. \int_{B_{2r}} e^{2u}|f_t|^{3} \varphi^2 \right|^t_{t-\delta r^2} &\le 4a \int_{t-\delta r^2}^{t}\int_{B_{2r}}e^{2u}|f_t|^{3}\varphi^2 + 12 \int_{t-\delta r^2}^{t} \int_{B_{2r}}|f_t|^{3}|\nabla \varphi|^2\\
&- \frac{3}{4}\int_{t-\delta r^2}^{t} \int_{B_{2r}}|\nabla f_t|^{2} |f_t| \varphi^2\\
&+ \left( 3C_N + \frac{3 C_N^2}{2} - 4b \right)\int_{t-\delta r^2}^{t} \int_{B_{2r}}|df|^2 |f_t|^{3}\varphi^2.
\end{align*}
Note that $3C_N + \frac{3 C_N^2}{2} - 4b < 0$.
Now, from \eqref{e^2u f_t^3 estimate}, \eqref{f_t^3 estimate}, and \eqref{e^2u f_t^3 estimate t}, we have
\begin{align*}
\frac{3}{4}\int_{t-\delta r^2}^{t} \int_{B_{2r}}|\nabla f_t|^2 |f_t| \varphi^2 &\le 2C C_1C_2^2 \varepsilon_1 + 4a C C_1 C_2 \varepsilon_1 + 12 \frac{4^2}{r^2} e^{2at} CC_1 C_2 \varepsilon_1\\
&\le C C_1 C_2^2 \varepsilon_1.
\end{align*}
So, we have
\begin{equation} \label{f_ti^2 f_t estimate}
\int_{t-\delta r^2}^{t} \int_{B_{2r}}|\nabla f_t|^2 |f_t| \varphi^2 \le C C_1 C_2^2 \varepsilon_1.
\end{equation}
Similarly,
\begin{equation} \label{df^2 f_t^3 estimate}
\int_{t-\delta r^2}^{t} \int_{B_{2r}}|df|^2 |f_t|^{3}\varphi^2 \le C C_1 C_2^2 \varepsilon_1.
\end{equation}

{\bf Step 4.} Estimate for $\int \int e^{2u}|f_t|^4 \varphi^2$.

From \eqref{p+2 estimate} with $p=2$ and using \eqref{f_t^3 estimate}, \eqref{f_ti^2 f_t estimate} and \eqref{df^2 f_t^3 estimate}, we have
\begin{equation} \label{e^2u f_t^4 estimate}
\int_{t-\delta r^2}^{t} \int_{B_{2r}} e^{2u}|f_t|^4 \varphi^2 \le CC_1C_2^2 \varepsilon_1
\end{equation}
and
\begin{equation} \label{f_t^4 estimate}
\int_{t-\delta r^2}^{t} \int_{B_{2r}} |f_t|^4 \varphi^2 \le e^{2at}C C_1C_2^2 \varepsilon_1.
\end{equation}

{\bf Step 5.} Estimate for $\int e^{2u}|f_t|^4 \varphi^2$.

Now let $t_0 \in [t-\delta r^2,t]$ be such that
\begin{equation*}
\int_{B_{2r}}e^{2u}|f_t|^4 \varphi^2 (t_0) = \min_{t-\delta r^2 \le s \le t} \int_{B_{2r}}e^{2u}|f_t|^4 \varphi^2 (s).
\end{equation*}
From \eqref{p+2 derivative result} with $p=2$ and using \eqref{e^2u f_t^4 estimate} and \eqref{f_t^4 estimate}, we have
\begin{align*}
\int_{B_{2r}}e^{2u}|f_t|^4 \varphi^2(t) &\le e^{6a \delta r^2} \left( \int_{B_{2r}}e^{2u}|f_t|^4 \varphi^2(t_0) + 16 \int_{t_0}^{t} \int_{B_{3r/2}}|f_t|^4 |\nabla \varphi|^2 \right)\\
&\le e^{6a \delta r^2} \left( \frac{1}{\delta r^2} \int_{t-\delta r^2}^{t} \int_{B_{2r}} e^{2u}|f_t|^4 \varphi^2 + 16 \frac{4^2}{r^2} \int_{t-\delta r^2}^{t}\int_{B_{2r}} |f_t|^3 \psi^2 \right) \\
&\le e^{6a \delta r^2} \left( \frac{1}{\delta r^2} CC_1C_2^2 \varepsilon_1 + 16 \frac{4^2}{r^2}  e^{2at} CC_1C_2^2 \varepsilon_1 \right) \\
&=  C C_1 C_2^2 \varepsilon_1 \left(\frac{1}{\delta r^2} + \frac{16(4)^2}{r^2} e^{2at}\right) e^{6a \delta r^2}.
\end{align*}
So, simply,
\begin{equation*}
\int_{B_{2r}}e^{2u}|f_t|^4 \varphi^2(t) \le C C_1 C_2^3 \varepsilon_1.
\end{equation*}
\end{proof}

\begin{remark}
We can keep going on to get bounds for $\int_{B_{2r}}e^{2u}|f_t|^n \varphi^2(t) \le C_3(n)$ for any $n$.
However, these bounds blow up to infinity as $n \rightarrow \infty$.
\end{remark}

\section{$W^{2,2}$ and gradient estimate}
\label{sec7}

In this section we will get $W^{2,2}$ estimate and gradient estimate for the solution $f$ of \eqref{main0-1}.
For simplicity, denote $\| \cdot \|_{k,p} = \| \cdot \|_{W^{k,p}(B_{2r})}$ and $\| \cdot \|_p = \| \cdot \|_{0,p}$.
First observe the following.

\begin{lemma}
Let $u$ be a solution of \eqref{main0-2}.
For $p>2$ and for any $r>0$,
\begin{equation} \label{e^pu estimate}
\int_{B_{2r}}e^{pu}\varphi^r(t) \le \int_{B_{2r}}e^{pu}\varphi^r(t_0) + \frac{2b^2 (p-2)}{pa} \int_{t_0}^{t} \int_{B_{2r}} |df|^p\varphi^r.
\end{equation}
\end{lemma}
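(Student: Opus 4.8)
The natural move is to derive a differential inequality for $Q(t):=\int_{B_{2r}} e^{pu}\varphi^r$ and then integrate it in time. Since the cut-off $\varphi$ is independent of $t$, I would differentiate under the integral sign and substitute the evolution equation \eqref{main0-2}, $u_t=b e^{-2u}|df|^2-a$, to obtain
\[
Q'(t)\;=\;p\int_{B_{2r}} e^{pu}u_t\,\varphi^r\;=\;pb\int_{B_{2r}} e^{(p-2)u}|df|^2\,\varphi^r\;-\;pa\int_{B_{2r}} e^{pu}\,\varphi^r .
\]
The last term is the good, dissipative term produced by the $-a$ in \eqref{main0-2}; the whole point of the computation is that it will absorb the exponential factor generated by the first term, so that no Gronwall-type factor appears.

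\textbf{Key step.} To control $\int_{B_{2r}} e^{(p-2)u}|df|^2\varphi^r$ I would apply Young's inequality pointwise with the conjugate exponents $\tfrac{p}{p-2}$ and $\tfrac{p}{2}$ (here $p>2$ is exactly what is needed): for every $\ep>0$,
\[
e^{(p-2)u}|df|^2\;\le\;\frac{p-2}{p}\,\ep\, e^{pu}\;+\;\frac{2}{p}\,\ep^{-(p-2)/2}\,|df|^p .
\]
Plugging this into the identity above gives $Q'(t)\le (p-2)b\,\ep\int e^{pu}\varphi^r+2b\,\ep^{-(p-2)/2}\int|df|^p\varphi^r-pa\int e^{pu}\varphi^r$. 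Choosing $\ep=\tfrac{pa}{(p-2)b}$ makes $(p-2)b\,\ep=pa$, so the two $e^{pu}$ terms cancel and one is left with
\[
Q'(t)\;\le\;\frac{2b^2(p-2)}{pa}\int_{B_{2r}}|df|^p\,\varphi^r .
\]
Integrating from $t_0$ to $t$ yields exactly \eqref{e^pu estimate}.

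\textbf{Where the care is needed.} The argument is short and there is no serious analytic obstacle — the only delicate point is the bookkeeping of constants in the Young step: one must pick the weight $\ep$ so that the $e^{pu}$ contribution is \emph{exactly} cancelled by the dissipative term $-pa\int e^{pu}\varphi^r$ (merely dominating it would push a factor $e^{pa(t-t_0)}$ into the final estimate). Two points are worth stressing in the write-up: first, the scheme uses \emph{no} pointwise bound on $e^{2u}$, which is essential since none is available a priori; and second, differentiation under the integral sign is legitimate here because $u$ is smooth on the relevant time interval, so the formal computation above is rigorous.
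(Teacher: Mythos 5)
Your argument follows the paper's proof exactly: differentiate $Q(t)$ under the integral sign, substitute $u_t = be^{-2u}|df|^2-a$, apply weighted Young's inequality with conjugate exponents $\tfrac{p}{p-2}$ and $\tfrac{p}{2}$ so that the resulting $e^{pu}$ term is absorbed by the dissipative term $-pa\int e^{pu}\varphi^r$, and integrate in time. The Young inequality you write, with weight $\ep^{-(p-2)/2}$ on $|df|^p$, is the correct sharp version. However, the constant you then read off does not follow from it. With $\ep=\tfrac{pa}{(p-2)b}$ you do get $(p-2)b\ep=pa$, so the $e^{pu}$ terms cancel, but the surviving coefficient on $\int|df|^p\varphi^r$ is
\[
2b\,\ep^{-(p-2)/2}\;=\;2b\left(\tfrac{(p-2)b}{pa}\right)^{(p-2)/2},
\]
which equals the asserted $\tfrac{2b^2(p-2)}{pa}$ only when $p=4$. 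So the displayed conclusion $Q'(t)\le\tfrac{2b^2(p-2)}{pa}\int_{B_{2r}}|df|^p\varphi^r$ is not what your own inequality produces for general $p>2$. You even single out the constant bookkeeping in the Young step as the one delicate point, but it is precisely there that the computation slips. For what it's worth, the paper's proof makes the same slip: it applies Young with the exponent dropped, writing $\lambda^{-1}$ where $\lambda^{-(p-2)/2}$ should appear, which is how it arrives at the clean constant. This is harmless for the rest of the paper, where the lemma is used with $p=6$ and $p=18$ and only the existence of some finite constant depending on $p,a,b$ matters, but the constant as stated (in the lemma and in your proposal) is not the one the weighted Young inequality actually yields.
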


\begin{proof}
Note that
\begin{equation*}
\partial_t (e^{pu}) = p e^{pu}u_t = pb e^{(p-2)u}|df|^2 - pae^{pu}.
\end{equation*}
So, multiplying $\varphi^r$ and integrating over $B_{2r}$ gives
\begin{align*}
\frac{d}{dt}\int_{B_{2r}} e^{pu} \varphi^r &= pb \int_{B_{2r}}e^{(p-2)u}|df|^2\varphi^r - pa \int_{B_{2r}} e^{pu}\varphi^r\\
& \le b \lambda (p-2) \int_{B_{2r}}e^{pu}\varphi^r + 2b \lambda^{-1} \int_{B_{2r}}|df|^p\varphi^r - pa \int_{B_{2r}}e^{pu}\varphi^r\\
&= \frac{2b^2 (p-2)}{pa} \int_{B_{2r}}|df|^p\varphi^r
\end{align*}
by Young's inequality with weight $\lambda = \frac{pa}{b(p-2)}$.
Hence, by integrating, we obtain the result.
\end{proof}

\begin{lemma}\label{df^rq estimate}
Let $f$ be any smooth function and let $\varphi \in C^{\infty}_{0}(B_{2r})$ be a cut-off function.
Then for any $r>1$ and $p \ge 2$, we have
\begin{equation}
\| \,|df|^r \varphi \|_{p} \le C \| \,|df|^{r-1}\|_{p} \| f \varphi\|_{2,2}.
\end{equation}
\end{lemma}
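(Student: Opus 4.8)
The plan is to run a two-dimensional Sobolev--Hölder argument, treating $v:=|df|$ as a nonnegative $C^1$ function (here $r>1$ makes the chain rule for $v^r$ unproblematic) and using the Kato-type inequality $|\nabla v|\le|\nabla df|$. Since $v^r\varphi\in C^1_c(B_{2r})$, the Gagliardo--Nirenberg--Sobolev inequality on $\R^2$,
\[
\|g\|_{L^p(\R^2)}\ \le\ C_p\,\|\nabla g\|_{L^q(\R^2)},\qquad \tfrac1q=\tfrac1p+\tfrac12
\]
(valid for $p\ge2$, where then $q=\tfrac{2p}{p+2}\in[1,2)$), applied to $g=v^r\varphi$, together with $\nabla(v^r\varphi)=r\,v^{r-1}(\nabla v)\varphi+v^r\nabla\varphi$ and $|\nabla v|\le|\nabla df|$, gives
\[
\big\|\,|df|^r\varphi\,\big\|_p\ \le\ C\,\big\|\,|df|^{r-1}\,|\nabla df|\,\varphi\,\big\|_{L^q}+C\,\big\|\,|df|^{r-1}\,|df|\,|\nabla\varphi|\,\big\|_{L^q}.
\]

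Next I would estimate each term by Hölder's inequality with the pairing $\tfrac1q=\tfrac1p+\tfrac12$, which is chosen precisely so that the factor $|df|^{r-1}$ sits in $L^p$ and the other factor in $L^2$:
\[
\big\|\,|df|^{r-1}|\nabla df|\,\varphi\,\big\|_{L^q}\le\big\|\,|df|^{r-1}\big\|_p\,\big\|\,\varphi\,\nabla df\,\big\|_{L^2},\qquad
\big\|\,|df|^{r-1}|df|\,|\nabla\varphi|\,\big\|_{L^q}\le\big\|\,|df|^{r-1}\big\|_p\,\big\|\,|df|\,\nabla\varphi\,\big\|_{L^2}.
\]
In the second estimate the remaining factor is $\le\|\nabla\varphi\|_\infty\|df\|_{L^2(B_{2r})}$, hence bounded a priori by the finite energy ($E(t)\le E_0$). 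In the first, I would write $\varphi\,\nabla^2 f=\nabla^2(\varphi f)-2\,\nabla\varphi\otimes\nabla f-f\,\nabla^2\varphi$ (and similarly absorb the Christoffel terms relating $\nabla df$ to $\nabla^2 f$), so that $\|\varphi\,\nabla df\|_{L^2}\le C\|\varphi f\|_{2,2}+C\|\nabla f\|_{L^2(B_{2r})}+C\|f\|_{L^2(B_{2r})}$, and again the last two terms are under control (by the energy bound and $\sup_\Sigma|f|<\infty$). Putting these together yields $\|\,|df|^r\varphi\|_p\le C\|\,|df|^{r-1}\|_p\big(\|\varphi f\|_{2,2}+1\big)$, which is the assertion once the harmless additive constant is absorbed into the $W^{2,2}$ norm.

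The Sobolev embedding and the Hölder pairing are routine; the step that genuinely needs care is the cutoff bookkeeping at the end, namely checking that each term produced by differentiating $\varphi$, and each lower-order term produced by commuting $\varphi$ past $\nabla^2$, really lands in the stated product form $\|\,|df|^{r-1}\|_p\cdot(\text{controlled})$ rather than forcing a full-ball norm of $f$ onto the right-hand side. If one wants the statement exactly as written (without the ``$+1$''), the clean fix is to introduce an auxiliary cutoff $\psi$ with $\varphi\prec\psi$ supported in $B_{2r}$ and express these lower-order terms through $\psi$, absorbing them via the energy bound and the boundedness of the target.
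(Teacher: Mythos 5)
Your proof is the same argument the paper uses: the two-dimensional Sobolev embedding $L^p \hookleftarrow W^{1,q}$ with $\tfrac1q=\tfrac1p+\tfrac12$ applied to $|df|^r\varphi$, followed by the product rule and Kato's inequality $|\nabla|df||\le|\nabla df|$, and then H\"older with that same exponent pairing to peel off $\||df|^{r-1}\|_p$. (In the paper's first line, $p=2s(2-s)$ is a typo for $p=2s/(2-s)$, i.e.\ $1/s=1/p+1/2$.)

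You also put your finger on something the paper's three-line proof glosses over: its final step asserts $\|\nabla(|df|\varphi)\|_2\le C\|f\varphi\|_{2,2}$, but commuting $\varphi$ past $\nabla^2$ produces the cross terms $\nabla\varphi\otimes\nabla f$ and $f\,\nabla^2\varphi$, and there is also the term $|df|\,\nabla\varphi$; none of these are controlled by $\|f\varphi\|_{2,2}$ alone since they live on $\operatorname{supp}\nabla\varphi$ where $\varphi$ may be small. Your proposed remedies --- absorbing them via the uniform energy bound and $\sup|f|<\infty$, yielding $\|f\varphi\|_{2,2}+\text{const}$, or interposing a nested cutoff $\psi$ with $\varphi\prec\psi\prec B_{2r}$ --- are the right fixes. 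In the paper's applications (Propositions \ref{W^22 estimate} and \ref{W^23 estimate}) the local energy on $B_{2r}$ is assumed $\le\varepsilon_1$, so these additive lower-order contributions are indeed harmless once tracked, but they should not have been silently dropped from the lemma's proof.
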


\begin{proof}
Let $1 \le s < 2$ be such that $p = 2s(2-s)$.
By Sobolev embedding,
\begin{align*}
\| \,|df|^r \varphi \|_{p} &\le C \| \nabla (|df|^r \varphi) \|_{s}\\
&\le C \| |df|^{r-1} \nabla (|df| \varphi) \|_{s}\\
&\le C \| \,|df|^{r-1}\|_{p} \| f \varphi\|_{2,2}.
\end{align*}
\end{proof}

Next, we will show $W^{2,2}$ estimate.

\begin{prop} \label{W^22 estimate}
Let $(f,u)$ are solutions of \eqref{main0}.
Then there exists $\varepsilon_1>0$ such that the following holds:

Assume that
\begin{equation*}
\sup_{T-2\delta r^2 \le t \le T}E(B_{2r},t) \le \varepsilon_1 , \qquad \int_{B_{2r} \times \{T-2\delta r^2\}}e^{6u} \le \varepsilon_1.
\end{equation*}
Then for $t \in [T-\delta r^2,T]$, we have
\begin{equation}
\|f\varphi\|_{2,2} \le C_4  = C_4 (r,\delta,T,\varepsilon_1,C_N)
\end{equation}
where
\begin{equation*}
C_4  = \left(C C_3 \varepsilon_1 + C \varepsilon_1^4\right)^{1/4} \left( 1 + C_3 \varepsilon_1 \delta r^2 \exp(C_3 \varepsilon_1 \delta r^2 ) \right)^{1/4}.
\end{equation*}
\end{prop}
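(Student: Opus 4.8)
\emph{Proof proposal.} The plan is to freeze the time variable and read \eqref{main0-1} as a linear elliptic equation for $f(\cdot,t)$. Writing
\[
\Delta f \;=\; e^{2u}f_t - A_f(df,df),
\]
and applying the interior $L^2$ elliptic estimate against a cut-off $\varphi$ supported in $B_{2r}$ (together with the Bochner identity to pass from $\Delta$ to the full Hessian, which only costs a gradient term), one obtains
\[
\|f\varphi\|_{2,2}\;\le\; C\big\|e^{2u}|f_t|\,\varphi\big\|_2 + C\big\|\,|df|^2\varphi\big\|_2 + C\Big(r^{-1}\|df\|_{L^2(B_{2r})} + r^{-1}\|f\|_{L^\infty(B_{2r})}\Big),
\]
where the last bracket collects harmless lower-order terms: by conformal invariance of $|df|^2\,dvol_g$ one has $\int_{B_{2r}}|df|^2 = 2E(B_{2r},t) < 2\varepsilon_1$, and $\|f\|_{L^\infty}$ is controlled using compactness of $N$ (or by replacing $f$ with $f-\overline f_{B_{2r}}$ and Poincar\'e's inequality). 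Thus everything reduces to bounding the two genuinely quadratic terms.

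For the first term the key is the pointwise identity $e^{4u}|f_t|^2 = \big(e^{2u}|f_t|^4\big)^{1/2}\big(e^{6u}\big)^{1/2}$, which by Cauchy--Schwarz and \Cref{tau estimate} gives
\[
\int_{B_{2r}} e^{4u}|f_t|^2\varphi^2 \;\le\; \Big(\int_{B_{2r}} e^{2u}|f_t|^4\varphi^2\Big)^{1/2}\Big(\int_{B_{2r}} e^{6u}\varphi^2\Big)^{1/2}\;\le\; C_3^{1/2}\Big(\int_{B_{2r}} e^{6u}\varphi^2\Big)^{1/2},
\]
the hypothesis $\sup_t E(B_{2r},t)\le\varepsilon_1$ being exactly what makes \Cref{tau estimate} applicable. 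For the second term, \Cref{df^rq estimate} with $r=2$, $p=2$ together with $\int_{B_{2r}}|df|^2<2\varepsilon_1$ yields $\|\,|df|^2\varphi\|_2\le C\varepsilon_1^{1/2}\|f\varphi\|_{2,2}$, which for $\varepsilon_1$ small is absorbed into the left-hand side. Combining and raising to the fourth power, $\|f\varphi\|_{2,2}^4 \le C\,C_3\int_{B_{2r}} e^{6u}\varphi^2(t) + C\varepsilon_1^4$.

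It remains to control $\int_{B_{2r}}e^{6u}\varphi^2(t)$, and this is where the second hypothesis enters. Apply \eqref{e^pu estimate} with $p=6$ and $t_0 = T-2\delta r^2$: the initial term is $\le\varepsilon_1$ by assumption, and the running term is a fixed multiple of $\int_{t_0}^t\int_{B_{2r}}|df|^6\varphi^r$. Estimating $\int|df|^6\varphi^r$ by \Cref{df^rq estimate} with $r=3$, $p=2$ (on a slightly larger cut-off, again absorbing a factor $\|f\varphi\|_{2,2}$ via small energy) and feeding back the bound of the previous paragraph, the quantity $F(t):=\|f\varphi(t)\|_{2,2}^2$ is seen to satisfy an integral inequality of Gronwall type with coefficients of size $\sim C_3\varepsilon_1$, over the interval $[T-2\delta r^2,T]$. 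Gronwall's lemma then produces
\[
\|f\varphi(t)\|_{2,2}^4 \;\le\; \big(C C_3\varepsilon_1 + C\varepsilon_1^4\big)\big(1 + C_3\varepsilon_1\,\delta r^2\exp(C_3\varepsilon_1\delta r^2)\big)\;=\;C_4^4 ,
\]
as claimed (recall $C_3$ itself carries a factor $\varepsilon_1$ by \eqref{C_3 general}, so $C_4$ is genuinely small with $\varepsilon_1$).

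The main obstacle is the coupling. The conformal factor forces the weight $\int e^{6u}$ into the elliptic estimate --- a term with no analogue in Struwe's classical $W^{2,2}$ bound --- and through \eqref{e^pu estimate} this weight is driven by $\int|df|^6$, hence in turn by $\|f\|_{2,2}$ itself, so one cannot close the estimate in a single application of elliptic regularity but must set up a feedback loop. That loop is closed by smallness of the local energy (which renders all the $|df|^k$ terms subcritical in two dimensions) together with the dissipative $-a$ in \eqref{main0-2}, visible as the favourable coefficient in \eqref{e^pu estimate}, at the price of the Gronwall exponential over an interval of length $\le 2\delta r^2$. A secondary, purely technical nuisance is the cut-off bookkeeping: the hypotheses are posed on $B_{2r}$ while the absorbed quantities want a cut-off equal to $1$ on a neighbourhood of $\supp\varphi$, so one works with a finite chain of nested cut-offs and must check that the several ``choose $\varepsilon_1$ small'' absorptions, and the harmlessness of $\|f\|_{L^\infty}$, are mutually consistent.
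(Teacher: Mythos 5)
Your proposal follows the paper's proof essentially step for step: the same elliptic $L^2$ estimate for $\Delta f = e^{2u}f_t - A(df,df)$, the same Cauchy--Schwarz split $\int e^{4u}|f_t|^2\varphi^2 \le (\int e^{2u}|f_t|^4)^{1/2}(\int e^{6u})^{1/2}$ fed by Proposition \ref{tau estimate}, the same use of \eqref{e^pu estimate} with $p=6$ to control the exponential weight, the same absorption of $\||df|^2\varphi\|_2$ via Lemma \ref{df^rq estimate} under small energy, and the same Gronwall closure. The only difference is cosmetic --- you invoke Lemma \ref{df^rq estimate} on $\int|df|^6$ with exponents $(r,p)=(3,2)$, forcing a second iteration of the lemma to handle $\||df|^2\|_2$, where the paper's choice $(r,p)=(3/2,4)$ lands directly on the controlled quantity $\int|df|^2\le 2\varepsilon_1$ in one step.
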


\begin{proof}
Suppose $\varphi$ be a cut-off function supported on $B_{3r/2}$ and $\varphi \equiv 1$ on $B_{r}$ and $|\nabla \varphi| \le \frac{4}{r}$.
Also, let $\psi$ be a cut-off function supported on $B_{2r}$ and $\psi \equiv 1$ on $B_{3r/2}$ and $|\nabla \psi| \le \frac{4}{r}$.
Let $t_0 = T-2\delta r^2$.

Without loss of generality, assume $\int_{\Omega} f = 0$.
Then we have, by Poincare,
\begin{equation*}
\|f\|_{p} \le C_p \|df\|_{p}.
\end{equation*}
From the equation $\Delta f + A(df,df) = e^{2u}f_t$, multiplying $\varphi$ and arranging terms gives
\begin{align*}
|\Delta (f\varphi)| &\le |A(df,df) \varphi| + |e^{2u}f_t \varphi| + k(\varphi) (|f| + |df|)\\
&\le C_N | \, |df|^2 \varphi| + |e^{2u}f_t \varphi| + k(\varphi) (|f| + |df|).
\end{align*}

By the $L^p$ estimate, we have
\begin{equation} \label{L^p estimate}
\|f\varphi\|_{2,p} \le C \left(C_N \|\, |df|^2 \varphi \|_{p} + \|e^{2u} |f_t| \varphi \|_{p} + \|df\|_{p} \right)
\end{equation}
where the constant $C$ only depends on $p$ and $r$.

Now let $p=2$.
Note that, by \eqref{e^2u f_t^4 estimate t} and \eqref{e^pu estimate},
\begin{align*}
\|e^{2u} |f_t| \varphi \|_{2}^4 & = \left(\int_{B_{2r}}e^{4u}|f_t|^2 \varphi^2 \right)^2\\
&\le \left(\int_{B_{3r/2}}e^{2u}|f_t|^4 \right) \left(\int_{B_{2r}} e^{6u}\varphi^4\right)\\
&\le \left(\int_{B_{2r}}e^{2u}|f_t|^4 \psi^2\right) \left(\int_{B_{2r}} e^{6u}\varphi^4\right)\\
&\le C_3 \left(\int_{B_{2r}} e^{6u}\varphi^4 (t_0) + \frac{8b^2}{6a} \int_{t_0}^{t} \int_{B_{2r}}|df|^6 \varphi^4\right)\\
&\le C_3 \varepsilon_1 + C_3 \int_{t_0}^{t}\int_{B_{2r}}|df|^6 \varphi^4.
\end{align*}

Now applying \Cref{df^rq estimate} with $r=3/2$,$q=4$ gives
\begin{align*}
\left(\int_{B_{2r}}|df|^6 \varphi^4 \right)^{1/4} &= \| \,|df|^{3/2} \varphi \|_{4} \le C \| \,|df|^{1/2}\|_{4} \| f \varphi\|_{2,2}\\
& \le C \varepsilon_1^{1/4} \| f \varphi\|_{2,2}.
\end{align*}

On the other hand, applying \Cref{df^rq estimate} with $r=2$, $q=2$ gives
\begin{equation*}
\| \,|df|^2 \varphi \|_{2} \le C \| df \|_{2} \| f \varphi\|_{2,2} \le C \varepsilon_1^{1/2} \| f \varphi\|_{2,2}.
\end{equation*}

All together, we have
\begin{equation*}
\|f\varphi\|_{2,2}^4 \le C C_N^4 \varepsilon_1^2 \| f \varphi\|_{2,2}^4 + C C_3 \varepsilon_1 + C C_3 \varepsilon_1 \int_{t_0}^{t} \| f \varphi\|_{2,2} + C \varepsilon_1^4
\end{equation*}

Let $X = \|f\varphi\|_{2,2}^4$.
Then the above equation becomes
\begin{equation*}
(1- C C_N^4 \varepsilon_1^2) X \le C C_3 \varepsilon_1 + C \varepsilon_1^4 +C_3 \varepsilon_1 \int_{t_0}^{t} X.
\end{equation*}

So, if $\varepsilon_1$ is small enough so that $1- C C_N^4 \varepsilon_1^2 >1/2$, then by Gronwall's inequality, we have
\begin{align*}
\|f\varphi\|_{2,2}^4 &\le \left(C C_3 \varepsilon_1 + C \varepsilon_1^4\right) \left( 1 + C_3 \varepsilon_1 (t-t_0) \exp(C_3 \varepsilon_1 (t-t_0)) \right)\\
&\le  \left(C C_3 \varepsilon_1 + C \varepsilon_1^4\right) \left( 1 + C_3 \varepsilon_1 \delta r^2 \exp(C_3 \varepsilon_1 \delta r^2) \right).
\end{align*}
This completes the proof.
\end{proof}

From Sobolev embedding, we now have, for $t \in [T-\delta r^2,T]$,
\begin{equation}\label{W^1p estimate}
\|f\varphi\|_{1,p} \le C_4 
\end{equation}
for any $p > 1$.

Now we will show gradient estimate.
This can be achieved by obtaining better estimate than $W^{2,2}$, say $W^{2,3}$.

\begin{prop} \label{W^23 estimate}
Assume the same as in \Cref{W^22 estimate}.
In addition, we assume that
\begin{equation*}
\int_{B_{2r} \times \{T-2\delta r^2\}}e^{18u} \le \varepsilon_1.
\end{equation*}

Then for $t \in [T-\delta r^2,T]$, we have
\begin{equation}
\|f\varphi\|_{2,3} \le C_5  = C_5 (r,\delta,T,\varepsilon_1,C_N)
\end{equation}
where
\begin{equation*}
C_5  = C \left( C_NC_4 ^2 + C_3^{1/12} \varepsilon_1^{1/12} + C_3^{1/12} \delta^{1/12} r^{1/6} C_4 ^{3/2} +  C_4  \right).
\end{equation*}

In particular,
\begin{equation}
\sup_{B_r} |df| \le C_5 .
\end{equation}
\end{prop}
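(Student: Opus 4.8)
The plan is to run the interior $L^p$ elliptic estimate \eqref{L^p estimate} once more, now with the exponent $3$. Applied to the equation $\Delta f + A(df,df) = e^{2u}f_t$ exactly as in the proof of \Cref{W^22 estimate}, it gives
\begin{equation*}
\|f\varphi\|_{2,3} \le C\big( C_N \| \, |df|^2\varphi \|_3 + \| e^{2u}|f_t|\varphi \|_3 + \|df\|_3 \big),
\end{equation*}
and the task is to bound each of the three terms. The term $\|df\|_3$ is immediately $\le C_4$ by the Sobolev embedding \eqref{W^1p estimate}. For $\| \, |df|^2\varphi\|_3$ I would invoke \Cref{df^rq estimate} with $r=2$ and exponent $3$, so that $\| \, |df|^2\varphi\|_3 \le C\| \, |df| \,\|_3\,\|f\varphi\|_{2,2} \le C C_4^{\,2}$ using \Cref{W^22 estimate} and \eqref{W^1p estimate}. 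No Gronwall iteration is needed here: unlike the $W^{2,2}$ estimate, the right-hand side of the $W^{2,3}$ estimate never feeds back on $\|f\varphi\|_{2,3}$, so it is a genuine one-way gain in integrability.

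The real work is the middle term $\| e^{2u}|f_t|\varphi\|_3$, and the obstacle is that the conformal weight $e^{2u}$ is not a priori bounded — this is precisely the feature of the flow that makes the problem non-standard. The idea is to interpolate between the two quantities we already control: the weighted bound $\int_{B_{2r}} e^{2u}|f_t|^4\varphi^2 \le C_3$ from \Cref{tau estimate}, and a high power of $e^u$. By Hölder with exponents $4/3$ and $4$,
\begin{equation*}
\int_{B_{2r}} e^{6u}|f_t|^3\varphi^3 \le \Big(\int_{B_{2r}} e^{2u}|f_t|^4\varphi^2\Big)^{3/4}\Big(\int_{B_{2r}} e^{18u}\varphi^6\Big)^{1/4} \le C_3^{\,3/4}\Big(\int_{B_{2r}} e^{18u}\varphi^6\Big)^{1/4},
\end{equation*}
so everything reduces to controlling $\int_{B_{2r}} e^{18u}\varphi^6(t)$. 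This is exactly the content of \eqref{e^pu estimate} with $p=18$: its right-hand side is $\int_{B_{2r}} e^{18u}\varphi^6(T-2\delta r^2) + C\int_{t_0}^t\int_{B_{2r}} |df|^{18}\varphi^6$, the first summand being $\le \varepsilon_1$ by the new hypothesis and the second being $\le C\delta r^2 C_4^{\,18}$ by \eqref{W^1p estimate} applied after the routine replacement of $\varphi$ by a slightly fatter cut-off. Collecting the resulting powers of $C_3$, $\varepsilon_1$, $\delta$, $r$ and $C_4$ reproduces the stated expression for $C_5$.

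This chain is also the main conceptual obstacle, and it explains why the extra assumption $\int_{B_{2r}\times\{T-2\delta r^2\}} e^{18u} \le \varepsilon_1$ is imposed: bounding $\| e^{2u}|f_t|\varphi\|_3$ forces control of $e^{18u}$, which by \eqref{e^pu estimate} converts into a high-integrability bound on $|df|$, so the $W^{2,2}$/$W^{1,p}$ conclusion of \Cref{W^22 estimate} must already be in hand. One must check the dependence does not circle back, which it does not, because \Cref{W^22 estimate} was closed by Gronwall using only the $\int e^{2u}|f_t|^4$ bound, which in turn rests on the sign condition $b \ge C_N^2$ from Section \ref{sec6}. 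Finally, since $\dim\Sigma = 2$, the embeddings $W^{2,3}(B_r)\hookrightarrow W^{1,3}(B_r)\hookrightarrow C^0(B_r)$ upgrade $\|f\varphi\|_{2,3}\le C_5$ to $\sup_{B_r}|df|\le CC_5$, which is absorbed into the constant.
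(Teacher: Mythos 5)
Your proposal is correct and follows essentially the same route as the paper: apply the elliptic $L^p$ estimate \eqref{L^p estimate} with $p=3$, bound the middle term $\|e^{2u}|f_t|\varphi\|_3$ via Hölder split into $\bigl(\int e^{2u}|f_t|^4\varphi^2\bigr)^{3/4}\bigl(\int e^{18u}\varphi^6\bigr)^{1/4}$, invoke \Cref{tau estimate} and \eqref{e^pu estimate} with $p=18$ plus \eqref{W^1p estimate} to close the bound, and finish with Sobolev embedding. Your use of \Cref{df^rq estimate} for the $\| \, |df|^2\varphi\|_3$ term versus the paper's direct $\|df\|_{2p}^2$ bound, and your explicit Hölder exponents $(4/3,4)$ versus the paper's $(4,12)$ split, are cosmetic differences that produce the same bounds.
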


\begin{proof}
By \Cref{W^1p estimate}, we have uniform bound for $|df|^p$ for any $p$.
Now from equation \eqref{L^p estimate}, we have
\begin{equation*}
\|f\varphi\|_{2,p} \le C \left(C_N \|\, df \|_{2p}^2 + \|e^{2u} |f_t| \varphi \|_{p} + \|df\|_{p} \right).
\end{equation*}

Now let $p=3$ and $t_0 = T-2\delta r^2$.
Then we have, using \eqref{e^pu estimate} and \Cref{W^1p estimate},
\begin{align*}
\|e^{2u} |f_t| \varphi \|_{3}^{12} & \le \| e^{u/2}|f_t| \varphi\|_{4}^{12} \|e^{3u/2}\|_{12}^{12}\\
&\le C_3 \int_{B_{2r}}e^{18u}\\
&\le C_3 \left(\int_{B_{2r}} e^{18u} (t_0) + C \int_{t_0}^{t} \int_{B_{2r}}|df|^{18} \right)\\
&\le C_3 \varepsilon_1 + C C_3 \delta r^2 C_4 ^{18}.
\end{align*}

Applying \Cref{W^1p estimate} completes the proof.
\end{proof}

\section{Global weak solution}
\label{sec8}

In this section, we will prove the main theorem \ref{global solution}.

\begin{lemma}
There exists $\ep_1>0$ such that if $(f,u)$ be a smooth solution of \eqref{main0} on $B_{2r} \times [T-2\delta r^2,T]$ and
\begin{equation}
\sup_{T-2\delta r^2 \le t \le T}E(B_{2r},t) \le \ep_1  \quad \textrm{ and } \quad  \int_{B_{2r} \times \{T-2\delta r^2\}}e^{18u} \le \ep_1, \label{ep1 cond}
\end{equation}
then H{\"o}lder norms of $f,u$ and their derivatives are all bounded by constants only depending on $T,r,\delta,\ep_1,C_N$.
\end{lemma}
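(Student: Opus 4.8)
The plan is to upgrade the Sobolev bounds already in hand — in particular $\sup_{B_r}|df|\le C_5$ and $\|f\varphi\|_{2,3}\le C_5$ from \Cref{W^23 estimate} — to full interior H\"older control by a local parabolic bootstrap, alternating estimates for the map $f$ (treated through its elliptic equation at fixed time) with the explicit solution of the conformal factor ODE \eqref{main0-2}. Throughout I would work on a decreasing sequence of concentric parabolic cylinders inside $B_{2r}\times[T-2\delta r^2,T]$, absorbing the loss of domain at each step, so that all constants remain functions of $r,\delta,T,\ep_1,C_N$ only.

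First I would record the consequences for $u$. With $|df|$ now bounded on the relevant cylinder, the explicit formula
\[
e^{2u(x,t)} \;=\; e^{-2a(t-t_0)}\,e^{2u(x,t_0)} \;+\; 2b\int_{t_0}^{t} e^{-2a(t-s)}\,|df|^2(x,s)\,ds ,\qquad t_0=T-2\delta r^2 ,
\]
together with \eqref{e^pu estimate}, propagates the smallness $\int_{B_{2r}\times\{t_0\}}e^{18u}\le\ep_1$ forward to a bound on $\int_{B_{2r}}e^{pu}(t)$ for $t\in[T-\delta r^2,T]$ and $p\le 18$; differentiating the formula in space and inserting the $W^{2,3}$ bound for $f$ controls $\nabla u$ in $L^p$, and hence, by Sobolev embedding on the surface, gives a pointwise bound $\|u\|_{C^0}\le C(r,\delta,T,\ep_1,C_N)$ on a slightly smaller cylinder. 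In particular $e^{\pm 2u}$ are two-sidedly bounded there, so $\partial_t-e^{-2u}\Delta$ is uniformly parabolic with quantitative constants and $u_t=be^{-2u}|df|^2-a$ is bounded.

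Next I would bootstrap $f$. Writing the equation as $\Delta f = e^{2u}f_t-A_f(df,df)$ and using $|A_f(df,df)|\le c|df|^4\le cC_5^4$ together with the spacetime bounds on $e^{2u}|f_t|^p$ coming from \Cref{tau estimate} and its iterates (cf.\ the Remark after it), the interior $L^p$ estimate yields $f\in W^{2,p}_{\mathrm{loc}}$, and then, from the original equation, $f_t\in L^p_{\mathrm{loc}}$ for every $p<\infty$; parabolic Sobolev embedding gives $df\in C^{\alpha,\alpha/2}_{\mathrm{loc}}$. Feeding this back into \eqref{main0-2} (equivalently, into the formula above) promotes $u$, $u_t$, $\nabla u$ to $C^{\alpha,\alpha/2}_{\mathrm{loc}}$ as well. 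Now the coefficient $e^{-2u}$ of the parabolic operator and the inhomogeneity $e^{-2u}A_f(df,df)$ are parabolic-H\"older, so the Schauder estimate applied to $\partial_t f=e^{-2u}(\Delta f+A_f(df,df))$ gives $f\in C^{2+\alpha,1+\alpha/2}_{\mathrm{loc}}$; differentiating the equation and iterating Schauder, with the H\"older norms of $e^{-2u}$ and of the derivatives of $A_f(df,df)$ controlled at each stage and the corresponding bounds for $u$ read off from the ODE, one reaches $C^\infty_{\mathrm{loc}}$ bounds for both $f$ and $u$ depending only on $T,r,\delta,\ep_1,C_N$.

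The main obstacle is the coupling through the conformal factor: the $L^p$ and Schauder machinery requires two-sided pointwise control of $e^{2u}$ (uniform parabolicity) and, later, H\"older control of $e^{-2u}$, whereas the hypothesis only provides the integral smallness $\int e^{18u}\le\ep_1$ at the initial slice. Converting this into genuine pointwise bounds — via the solution formula for $u$, the already-established $W^{2,3}$ control of $f$, and the finiteness of the time interval — while keeping every constant independent of the particular solution is the delicate step; once it is in place, the remainder is the familiar, if lengthy, alternation between interior $L^p$/Schauder estimates for $f$ and elementary ODE estimates for $u$, as in Struwe's original regularity argument.
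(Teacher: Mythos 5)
Your proposal is correct at the top level and arrives at the same two-step structure as the paper: first establish two-sided pointwise bounds on $e^{2u}$ so that $\partial_t-e^{-2u}\Delta$ is uniformly parabolic, then carry out the $L^p$/Schauder bootstrap exactly as in the proof of Theorem~\ref{short time}. The divergence is entirely in the first step. With $\sup_{B_r}|df|\le C_5$ from \Cref{W^23 estimate} already in hand, the paper simply inserts this into the explicit solution formula $e^{2u}=e^{-2at}\bigl(1+2b\int_0^t e^{2as}|df|^2\,ds\bigr)$ and reads off $e^{-2at}\le e^{2u}\le 1+\tfrac ba(\sup|df|)^2$ directly; no Sobolev argument for $u$ is needed at all, so the step you flag at the end as ``the delicate step'' is actually the easy one once the $C^0$ bound on $df$ is fed into the pointwise ODE for $u$. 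Your detour — propagating the integral smallness of $\int e^{pu}$ via \eqref{e^pu estimate}, bounding $\nabla u$ in $L^p$, and invoking Sobolev embedding — is substantially longer and, as written, has a gap: differentiating the Duhamel formula from $t_0$ in space produces the term $\nabla u(\cdot,t_0)$, which the hypotheses (only $\int e^{18u}(t_0)\le\ep_1$) do not control, while differentiating the formula from $t=0$ requires $\nabla df$ on all of $[0,t]$ rather than just on $[T-\delta r^2,T]$. The paper's shortcut avoids both issues because it never needs a spatial derivative of $u$. After uniform parabolicity is in place the two arguments coincide. (Minor typo: the pointwise bound is $|A_f(df,df)|\le c\,|df|^2$, not $c\,|df|^4$.)
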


\begin{proof}
By the sup bound of $|df|$, we have $e^{-2u(f)} \le e^{2aT}$ and
\begin{align*}
e^{-2u(f)} &= \frac{e^{2at}}{1+2b \int_{0}^{t}e^{2as}|df|^2(x,s)ds} \ge \frac{e^{2at}}{1+2b {M'} ^2 \frac{e^{2at}-1}{2a}}\\
& \ge \frac{1}{1 + \frac{b}{a} {M'} ^2}.
\end{align*}
Hence the operator $\partial_t - e^{-2u}\Delta$ is uniformly parabolic on $[0,T_0)$.

Similar in proof of Theorem \ref{short time}, we conclude the desired estimate.
\end{proof}

\begin{proof}
(Proof of Theorem \ref{global solution})
First consider $f_0$ is smooth.
By Theorem \ref{short time}, there exists a smooth solution in $(\Sigma \times [0,T))$ for some $T>0$.
Let $T_1$ be the maximal existence time.
If $T_1 = \infty$ then we obtain global solution which is smooth everywhere.
So suppose $T_1 < \infty$.

If we have $\limsup_{t \nearrow T_1} E(B_{2r}(x),t) \le \ep_1$ for any $x \in \Sigma$ and $r>0$, then by above lemma H{\"o}lder norms of $f,u$ and their derivatives are all bounded, hence $f,u$ can be extended beyond the time $T_1$.
This contradicts with maximality of $T_1$.
So there should be a point $x \in \Sigma$ such that
\[
\limsup_{t \nearrow T_1} E(B_{2r}(x),t) > \ep_1.
\]
Since the total energy is finite, there are at most finitely many such points $\{x_1, \cdots, x_{k_1}\}$.
Then by above lemma,
we get smooth solution $(f_1,u_1)$ on $\Sigma \times [0,T] \setminus \{(x_i^1,T_1)\}_{i=1, \cdots, k_1}$.
If we denote $f(x,T_1)$ and $u(x,T_1)$ as the weak limit of $f(x,t)$ and $u(x,t)$ as $t \nearrow T_1$, then $f(t),u(t)$ converges to $f(T_1),u(T_1)$ strongly in $W^{1,2}_{loc}(\Sigma \setminus \{x_i^1\})$.

Next, denote $g_1 = e^{2u_1(x,T_1)}g_0$ and consider the flow \eqref{main0} with initial map $f_1$ and initial metric $g_1$.
As above, there is a smooth solution $(f_2,u_2)$ on $\Sigma \times [0,T_2] \setminus \{(x_i^2,T_2)\}_{i=1, \cdots, k_2}$.
From these we can set up a smooth solution $(f,u)$ on $\Sigma \times [0,T_1+T_2]$ which is smooth except $\{(x_i^1,T_1)\} \cup \{(x_i^2,T_2)\}$.
Iterate this process to obtain global solution with exception points, which are at most finitely many because the total energy is finite.

\end{proof}

\section{Finite time singularity}
\label{sec9}

As the conformal heat flow is developed to postpone the finite time singularity, it is expected to have no finite time singularity.
In this section we will discuss few remarks about finite time singularity.

Recall the following
\begin{lemma}
(\cite {T04b})
There exist a compact target manifold $N$, a smooth map $f_0 : D \to N$ and $\ep>0$ such that every smooth map $f : D \to N$ homotopic to $v_0$ fails to be harmonic.
If furthermore $E(f) \le E(f_0)$, then
\[
\int_{D} |\tau(f)|^2 \ge \ep.
\]
\end{lemma}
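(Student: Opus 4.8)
The plan is to take the target $N$, the disc $D$, and the map $f_0$ directly from Topping's construction in \cite{T04b}, and to reduce the two assertions to (i) a homotopy-theoretic obstruction built into that construction and (ii) a compactness argument for almost harmonic maps. Since $D$ is contractible, ``homotopic to $f_0$'' must be read relative to the boundary, i.e.\ among maps agreeing with $\gamma := f_0|_{\partial D}$; the relevant classes are then governed by $\pi_2(N)$. The target $N$ in \cite{T04b} is a round two-sphere carrying a carefully chosen conformal metric (with two localized bumps), and $f_0$ is arranged so that its relative class admits no harmonic representative: a harmonic representative would, by removable-singularity and energy accounting, require a nontrivial harmonic sphere glued in, and the bump geometry forbids any such sphere of the available energy from carrying the needed class. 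This is exactly the non-existence in the first sentence, and I would simply quote it.

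For the quantitative bound I would argue by contradiction. If it failed, there would be maps $f_n$, all in the relative class of $f_0$, with $E(f_n)\le E(f_0)$ and $\int_D |\tau(f_n)|^2 \to 0$. By the bubble-tree compactness theory for sequences of bounded energy with $L^2$-vanishing tension --- Sacks--Uhlenbeck \cite{SU81} together with the energy identity and no-neck property of Qing--Tian and Lin--Wang \cite{LW98} --- a subsequence converges, after rescaling, to a harmonic body map $f_\infty:D\to N$ with $f_\infty|_{\partial D}=\gamma$ (weakly harmonic, hence smooth up to the boundary by H\'elein-type regularity) together with finitely many harmonic spheres $\omega_1,\dots,\omega_k:S^2\to N$, and with $E(f_0)\ge E(f_\infty)+\sum_j E(\omega_j)$; moreover, because there is no neck energy, the relative homotopy class of $f_n$ is the ``connected sum'' of $[f_\infty]$ with the bubble classes $[\omega_j]$.

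It then remains to rule out each case. If $k=0$ then $f_\infty$ is a harmonic map rel $\gamma$ in the class of $f_0$, contradicting the first part. If $k\ge 1$ then each $\omega_j$ is nonconstant, so $E(\omega_j)\ge\kappa(N)>0$ for the energy-quantization threshold of $N$, forcing $E(f_\infty)\le E(f_0)-\kappa(N)$ while $[f_\infty]$ is obtained from $[f_0]$ by subtracting the bubble classes. Invoking the explicit structure of $N$ from \cite{T04b} --- namely that the only harmonic spheres below the relevant energy carry classes whose subtraction from $[f_0]$ leaves a still-forbidden class of energy at most $E(f_\infty)$ --- one obtains a contradiction at this level as well, and iterating down the finite bubble tree closes every case. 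This produces the required $\ep$.

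The main obstacle is not the analytic compactness, which is by now standard, but the homotopy bookkeeping through the bubble-tree limit: one must verify that no assembly of a harmonic body map and harmonic spheres within the energy budget $E(f_0)$ realizes the class $[f_0]$, at every level of the tree. This is precisely the combinatorial feature that the construction of \cite{T04b} is engineered to guarantee, so I would lean on that construction rather than reproduce its details here.
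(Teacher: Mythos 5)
The paper does not prove this lemma at all; it simply imports it from \cite{T04b} and offers only the one-line remark, directly below the statement, that ``the proof of the above lemma relies on no-neck property of approximate harmonic map with $\|\tau\|_{L^2}\to 0$.'' Your sketch is therefore far more detailed than anything the paper provides, and it correctly identifies the same analytic core the paper points to: compactness for sequences of bounded energy with $L^2$-vanishing tension, with the no-neck property and energy quantization ensuring that the body map and bubbles account for both the energy and the (relative) homotopy class. The place where your argument necessarily stays schematic is the case $k\ge 1$. Ruling that case out requires strictly more than the non-existence of a harmonic map in $[f_0]$: one needs that no class of the form $[f_0]-\sum_j[\omega_j]$ admits a harmonic representative within the leftover energy budget $E(f_0)-\sum_j E(\omega_j)$, and this is precisely the combinatorial-geometric content that Topping's construction is engineered to supply; you flag this honestly and defer to \cite{T04b}, which is the right move. (As a minor caution, your parenthetical description of Topping's target — a round sphere with ``two localized bumps'' — should be treated as a loose paraphrase rather than a precise account; if you quote the construction you should check the exact geometry against \cite{T04b}.) Bottom line: since the paper delegates the entire proof to \cite{T04b}, your approach and the paper's coincide at the level that matters, and your reconstruction of Topping's argument is a reasonable and essentially correct outline.
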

Together with energy decreasing property of harmonic map heat flow $f(t)$, the above lemma implies that no heat flow starting with initial map $f_1$ homotopic to $f_0$ above can be smooth after the time $t = \frac{E(f_1)}{\ep}$.

This argument can be avoided in conformal heat flow.
From \eqref{E'<0}, we have
\[
E(0)-E(t) = \int_{0}^{t} \int_{D} e^{-2u}|\tau(f(t))|^2.
\]
So, if $u$ is large, $\int_{D} e^{-2u}|\tau(f(t))|^2$ can be smaller than $\ep$ even if $\int_{D} |\tau(f(t))|^2 > \ep$.

The proof of the above lemma relies on no-neck property of approximate harmonic map with $\|\tau\|_{L^2} \to 0$.
And the assumption $\|\tau\|_{L^2} \to 0$ is essential in the no-neck property as there is a counter example of Parker without the assumption.
The conformal heat flow makes the tension field converge to zero with different scale.
Hence the information about the converging scale of the tension field will play an important role in the property of the flow.

\section*{Acknowledgement}

The author would like to thanks Armin Schikorra and Thomas Parker for valuable comments and advice.


\bibliographystyle{abbrv}
\bibliography{bib}

\end{document}